\renewcommand {\epsilon}{\varepsilon}
\newcommand{\CC}{\mathbb{C}}
\newcommand{\EE}{\mathbb{E}}
\newcommand{\NN}{\mathbb{N}}
\newcommand{\PP}{\mathbb{P}}
\newcommand{\RR}{\mathbb{R}}
\newcommand{\bB}{\mathcal{B}}
\newcommand{\cC}{\mathcal{C}}
\newcommand{\fF}{\mathcal{F}}
\newcommand{\ffF}{\mathfrak{F}}
\newcommand{\fK}{\mathfrak{K}}
\newcommand{\fM}{\mathfrak{M}}
\newcommand{\al}{\alpha}
\newcommand{\e}{\varepsilon}
\newcommand{\la}{\lambda}
\newcommand{\si}{\sigma}
\newcommand{\Om}{\Omega}
\newcommand{\mto}{\mapsto}
\newcommand{\ra}{\rightarrow}
\newcommand{\lra}{\longrightarrow}
\newcommand{\ti}{\tilde}
\newcommand{\vzv}{\Leftrightarrow}
\newcommand{\lgl}{\ensuremath{\langle}}
\newcommand{\rgl}{\ensuremath{\rangle}}
\newcommand{\diam}{\ensuremath{diam}}
\newcommand{\ind}{\mathbf{1}}
\newcommand{\lqq}{\leqslant}
\newcommand{\gqq}{\geqslant}
\newtheorem{thms}{Theorem}[section]
\newtheorem{cors}[thms]{Corollary}
\newtheorem{lems}[thms]{Lemma}
\newtheorem{thm}{Theorem}[section]
\newtheorem{prp}[thm]{Proposition}
\newtheorem{lem}[thm]{Lemma}
\newtheorem{rem}[thm]{Remark}
\DeclareMathSymbol{\ophi}{\mathalpha}{letters}{"1E}
\renewcommand{\phi}{\varphi}
\newcommand{\be}{\begin{equation}}
\newcommand{\ee}{\end{equation}}
\newcommand{\ben}{\begin{equation*}}
\newcommand{\een}{\end{equation*}}
\newcommand{\ba}{\begin{equation}\begin{aligned}}
\newcommand{\ea}{\end{aligned}\end{equation}}
\DeclareMathOperator{\supp}{supp}
\DeclareMathOperator{\dist}{dist}
\newenvironment{proof}{\par\noindent{\bf Proof:}}{\hfill$\blacksquare$\par}
\newfont{\cyrfnt}{wncyr10}
\def\J3{\cyrfnt{\rm \u{\cyrfnt I}}}
\def\j3{\cyrfnt{\rm \u{\cyrfnt i}}}
\definecolor{DarkGreen}{rgb}{0.1,0.7,0.3}   
\definecolor{DarkGreen}{rgb}{0.1,0.7,0.3}   
\begin{document}
\title{
Strong averaging along 
foliated L\'evy diffusions 
with heavy tails \\
on compact leaves 
}

\author{
Michael A. H\"ogele
\footnote{Departamento de Matem\'aticas, Universidad de los Andes, Bogot\'a, Colombia; ma.hoegele@uniandes.edu.co}
\hspace{2cm}
Paulo Henrique da Costa
\footnote{Departamento de Matem\'atica, Universidade de Bras\'ilia, Bras\'ilia, Brazil; phcosta@unb.br} 
}

 \maketitle

\begin{abstract}

This article shows a strong averaging 
principle for diffusions driven by discontinuous heavy-tailed L\'evy noise, 
which are invariant on the compact horizontal leaves of a foliated manifold 
subject to small transversal random perturbations. 
We extend a result for such diffusions with exponential moments 
and bounded, deterministic perturbations 
to diffusions with polynomial moments of order $p\gqq 2$, 
perturbed by deterministic and stochastic integrals with unbounded coefficients and polynomial moments. 
The main argument relies on a result 
of the dynamical system for each individual jump increments 
of the corresponding canonical Marcus equation. 
The example of L\'evy rotations on the unit circle 
subject to perturbations by a planar L\'evy-Ornstein-Uhlenbeck process 
is carried out in detail. 
\end{abstract}

\noindent \textbf{Keywords:} 
Markov processes on manifolds;
solutions of stochastic differential equations with L\'evy noise,
foliated manifolds; 
strong averaging principle; 
scale separation; 
Marcus canonical equation; 
dynamical systems;
heavy tail distributions;\\

\noindent \textbf{2010 Mathematical Subject Classification: } 60H10, 60J60, 60G51, 58J65, 58J37.

\section{Introduction}

The theory of averaging (deterministic) ordinary differential equations, whose origins date 
back to the works of Laplace and Lagrange,  
has  been  applied through its history in many fields of applications 
such as celestial mechanics, nonlinear mechanics, oscillation theory and radiophysics. 
First rigorous results start with the foundational contributions of Krylov, Bogoliubov and Mitropolskii 
\cite{An60, BK37, BM61, KK55, VM68}. 
For a comprehensive and systematic introduction to the subject we refer to the monograph \cite{SVM}
of Saunders, Verhulst and Murdock. 
The idea of averaging random systems given as stochastic ordinary differential equations 
with respect to Gaussian processes goes back to Stratonovich \cite{St63, St67}, 
first rigorous results appear with the seminal works of Khasminski and others \cite{Kh631, Kh632, Kh66, Kh68, GS72, FW84, Sk87}. 
More recent developments on systems of stochastic (partial) 
differential equations with \textit{continuous} Gaussian noises
can be found for instance in \cite{Ar01, Ce09, KK01, PV00, RW12, Ve91, Ve99} 
and the references therein. 

However, in many contexts the Gaussian paradigm is known to be too limited. 
First results on averaging differential equations with respect to 
\textit{discontinuous} and non-Gaussian Poisson noise are obtained in \cite{BS74} and \cite{KM91}. 
A first strong averaging principle for scalar L\'evy diffusions with 
Lipschitz coefficients and bounded jumps is established in \cite{JDX11}. 
In \cite{HR} the authors show a strong averaging principle 
for L\'evy diffusions with exponential moments on foliated manifolds, 
explained in more detail below. 	
This article is dedicated to the averaging of a large 
class of discontinuous semimartingales supported on the compact 
leaves of a foliated manifold having polynomial moments.  

Intuitively speaking a foliated manifold is a Riemannian manifold equipped with a 
family of equivalence classes of submanifolds depending on a parameter, 
which defines a ``transversal'' component. Precise details on foliated manifolds
are found in classical texts on the topic such as \cite{Cannas, Tondeur, Walcak}. 
The notion of a foliated Brownian motion is introduced in the seminal article \cite{Ga83} by Garnett. 
In \cite{Li} Li shows an averaging principle for Hamiltonian systems, which 
inspired first results on averaging on foliated spaces in \cite{GR13}, 
where the authors show an averaging principle for a foliated Brownian diffusion. 
More precisely, the authors study a Brownian diffusion in Stratonovich sense on 
a foliated manifold, which respects the foliated structure of the manifold in the sense that 
the diffusion does not leave the compact leaf of its initial condition almost surely. 
By assumption, the diffusion enjoys a unique invariant measure 
supported on the leaf of its initial condition as the limit of its time average in $L^p$ sense for some $p\gqq 2$. 
For small $\e$ the law of the perturbed 
diffusion converges on the accelerated time scale $t/\e$ to the invariant measure, 
such that the transversal perturbation converges to the vector field 
averaged against the invariant measure, which leads to an 
ordinary differential equation in transversal direction to the leaves of the foliation. 
Their main result is an averaging principle in 
the $L^p$ sense with logarithmic rates of convergence. 

This result is extended in \cite{HR} to a class of foliated 
L\'evy diffusions with compact leaves and exponential moments of the underlying jump L\'evy process, 
formulated in terms of canonical Marcus equations, as for instance in \cite{KPP95}. 
However by the L\'evy-Chinchine formula (see for instance \cite{Ap09}) it is obvious that this 
is a rather narrow subclass of possible L\'evy drivers. 
The current article generalizes this result to the case of L\'evy jump diffusions, 
with moments of order $p$, $p\gqq 2$, for which the averaging converges in $L^p$ sense. 
The lower bound $p\gqq 2$ seems natural for equations on manifolds 
since the Marcus canonical integral can be rephrased as an integral against the quadratic variation 
of the underlying process, see \cite{KPP95}, Lemma 2.1. 

The difficulty of an immediate extension of the results to L\'evy diffusions 
with only $p$-th moments lies in their formulation as a canonical Marcus equation \cite{KPP95, Ma78, Ma81}, 
where each single jump increment of the process is given as the solution of an ode, 
with a vector field tangential to the leaf. 
Assume that $\Delta Z$ is a single jump increment in an appropriate noise space 
of the driving L\'evy process $Z$. 
The jump increment on the manifold then has to follow the local coordinates 
and is then given as the increment 
$\Phi^{F\Delta Z}(y) -y$, where  
$\Phi^{F\Delta Z}(y)= Y(1; y, F\Delta Z)$ is the time $1$ map of the 
solution $Y$ of the ordinary differential equation 
\begin{equation}\label{eq: ode}
\frac{dY}{dt} = F(Y) \Delta Z, \qquad Y(0) = y,  
\end{equation}
and $F$ is a Lipschitz vector field such that $F\Delta Z$ 
takes values in the tangent space of our manifold. 
For details we refer to \cite{Ma78, Ma81, KPP95}. 
The problem is that in general the Lipschitz continuity of $F$ only implies 
\begin{equation}\label{eq: exponential increment}
|\Phi^{F\Delta Z}(x) - \Phi^{F\Delta Z}(y)| \lqq e^{\ell \|\Delta Z\|}|x-y|\qquad \forall x, y, 
\end{equation}
where $\ell$ is the Lipschitz constant of $F$.  
This means each jump increment on the manifold 
depends exponentially on the random size of $\Delta Z$. See \cite{KPP95} Lemma 3.1. 
Taking the expectation of (\ref{eq: exponential increment}) the finiteness of the right-hand side
implies the exponential integrability $\int_{\|z\| >1} \exp(\kappa \|z\|) \nu(dz) < \infty$ of the L\'evy measure $\nu$ of $Z$ 
for some constant $\kappa>0$ larger than the Lipschitz constant $\ell$ of $F$,  
which is equivalent to the existence of exponential moments of $Z$. 
This straight-forward argument is the main reasoning 
concerning the moments carried out in the previous article \cite{HR}.
However, since the leaves of the foliation are compact and 
the main driving diffusion $X$ is invariant on the leaf of its initial condition, 
any jump increment of $X$ is bounded by the diameter of the leaf in the surrounding space.  
With this intuition in mind we may prove in Lemma \ref{lem: invariance lemma} of Section 3  
the following result on positive invariant ODE dynamical systems $Y$ 
of type (\ref{eq: ode}), which yields a global constant $C>0$ such that for any $x, y$
\begin{align}\label{eq: polynomial increment}
\sup_{t\gqq 0} 
|(DF(Y(t;x)\Delta Z) F(Y(t;x))\Delta Z - (DF(Y(t;y)\Delta Z) F(Y(t;y))\Delta Z| \lqq C|x-y| \,\|\Delta Z\|^2. 
\end{align}
Taking the expectation of (\ref{eq: polynomial increment}) a finite right-hand side is equivalent to 
$\int_{\|z\|> 1} \|z\|^2 \nu(dz)<\infty$ imposing only second moments of $Z$. 
It turns out eventually to be an easy task to link (\ref{eq: exponential increment}) 
to (\ref{eq: polynomial increment}) via Taylor expansion of $\Phi^{F\Delta Z}$.  
We follow these lines of reasoning in a technically more subtle setting 
in Section~\ref{sec: transversal perturbations}. 

The second extension we undertake is the step from 
perturbations by a small deterministic bounded vector field in \cite{HR}
to a general class of discontinuous L\'evy diffusions with moments of order $2p$, 
whose multiplicative coefficients may depend on the slow component. 
The coefficients in front of $\circ d B$ and $\diamond \ti Z$ 
will be only depend on $\pi X^\e$, since it is well-known in averaging theory 
that in general diffusion coefficients are difficult to average in a strong sense. 

The article is organized as follows. 
Subsection~2.1 lays out the general setup. 
Subsection~2.2 states the specific hypotheses on 
the integrability and ergodicity conditions of the stochastic processes and 
the main result of this article given in Theorem \ref{thms: main result 1}. 
Subsection~2.3 spells out the main example:  
L\'evy processes with polynomial moments on the unit circle. 
In Section~3 we establish the estimate (\ref{eq: polynomial increment}) and 
derive the crucial estimates on the deviation of the perturbed from the unperturbed solution, 
under arbitrary Lipschitz functions including a crucial dynamical system argument. 
Section~4 is dedicated to the control of the averaging error term 
exploiting the results from Section 3 in special cases. 
Section~5 finishes the proof of the main result synthesizing Section~3 and 4. 
The article finishes with an Appendix providing the missing details of the example.

\section{Object of study and main results} 
\subsection{The set up } \label{subsec: setup}

\paragraph{The geometry: } 
Let $M$ be a finite dimensional connected, smooth Riemannian manifold. 
It is known by the strong version of Whitney theorem for instance in Boothby \cite{Boothby} 
that any finite dimensional smooth manifold is embedded in $\mathbb{R}^{m}$ for some $m \in \NN$ sufficiently large.  
The manifold $M$ is equipped with an $n$-dimensional foliation $\fM$ in the following sense. 
Let $\fM = (L_{x})_{x\in M}$, with $M = \bigcup_{x\in M} L_x$ and the sets $L_x$ are 
equivalence classes of the elements of $M$ satisfying the following properties. 
\begin{enumerate}
 
 \item[a)] 
Given an $x_0\in M$, there exists a neighborhood 
$U\subset M$ of the corresponding leaf $L_{x_0}$, 
a connected open set $V\subset \RR^d$ containing the origin $0\in \RR^d$
and a diffeomorphic coordinate map 
$\varphi: U \rightarrow L_{x_0}\times V$. 
\item[b)] The set $U$ of item a) can be taken small enough such 
that the derivatives of the coordinate map $\varphi$ are bounded.  
The second coordinate of a point $x \in U$, called the 
vertical coordinate, is denoted with 
the help of the projection $\pi: U \rightarrow V$ by 
$\varphi(x)=(\bar x, \pi(x))$ for some $\bar x\in L_{x}$.  
\end{enumerate}

\begin{rem}
For any $v\in V$ and $x \in U$ with $\pi(x)=v$ the preimage satisfies $\pi^{-1}(v)= L_x$. 
\end{rem}

\paragraph{The unperturbed equation: } 
We are interested in the ergodic behavior of a 
strong solution of a L\'evy driven SDE with discontinuous components which takes 
values in $M$ and respects the foliation. 
In order to avoid that jump increments lead to an exit from the foliation of the initial condition, 
the jump increments must respect the curved structure of the local coordinates and 
therefore necessarily satisfy a canonical Marcus equation, 
which are equivalent to the generalized Stratonovich equation in the sense of Kurtz, Pardoux and Protter \cite{KPP95}.  
We consider the formal canonical Marcus stochastic differential equation 
\begin{equation}\label{eq: SDE}
d X_t = F_0(X_t) dt + F(X_t) \diamond d Z_t + G(X_t)\circ dB_t, \qquad X_0 = x_0\in M,
\end{equation}
which consists of the following components. 

\begin{enumerate}
\item[1.] Let $Z = (Z_t)_{t\gqq 0}$ with $Z_t = (Z^1_t, \dots, Z^r_t)$ be a L\'evy process with values in 
$\RR^r$ for fixed $r\in \NN$ on a given filtered probability space $\mathbf{\Omega} = (\Omega, \fF, (\fF_t)_{t\gqq 0}, \PP)$ 
with characteristic triplet $(0, \nu, 0)$. 
Suppose that the filtration $(\fF_t)_{t\gqq 0}$ satisfies the ``usual'' conditions in the sense of Protter \cite{Pr04}.
As a consequence of the L\'evy-It\^o decomposition 
$Z$ is a pure jump process with respect to a $\si$-finite measure $\nu: \bB(\RR^r) \ra [0, \infty]$ called the L\'evy measures  
satisfying 
\begin{equation}\label{eq: second order moment}
\int_{\RR^r} (1\wedge \|z\|^2)\; \nu(dz)< \infty\qquad \mbox{ and }\quad \nu(\{0\}) = 0.
\end{equation}
For details we refer to the monographs of Sato \cite{Sa99} or Applebaum \cite{Ap09}.

\item[2.] Let $F_0 \in \cC^2(M, T\fM)$ with $F_0(x) \in T_x L_x$. 
The vector field $F \in \cC^2(M; L(\RR^{r}; T\fM))$ satisfies 
that the map $M \ni x\mapsto F(x)$ is $\cC^2$ 
and the linear map $F(x)$ sends a vector $z\in \RR^r \mto F(x) z \in T_x L_x$ 
to the tangent space of the respective leaf. 
 \item[3.] Let $B = (B^1, \dots, B^r)$ be a standard Brownian motion with values in $\RR^r$ defined 
on $\mathbf{\Om}$ and $G\in \cC^2(M, L(\RR^r, T\fM))$ with $G(x) \in T_x L_x$ for any $x\in M$. 
\end{enumerate}
We further assume that the vector fields $F_0$, $F$, $(DF_0) F_0$, $(DF) F$, $G$ and $(DG) G$ are globally Lipschitz 
continuous with Lipschitz constant $\ell>0$. 

A strong solution of the formal equation (\ref{eq: SDE}) 
is defined as a map $X: [0, \infty) \times \Om \ra~M$ 
satisfying $\PP$-almost surely for all $t\gqq 0$ 
\begin{align}\label{eq: SDE unperturbed integral form}
X_t &= x_0 + \int_0^t F_0(X_s) ds + \int_0^t G(X_s) dB_s + \frac{1}{2}\int_0^t (DG(X_s)) G(X_s) d\lgl B\rgl_s \nonumber\\
&\quad+\int_0^t F(X_{s-}) d Z_s + \sum_{0 < s\lqq t} (\Phi^{F \Delta_s Z}(X_{s-})-X_{s-}- F(X_{s-}) \Delta_s Z),
\end{align}
where $\lgl B\rgl_\cdot$ stands for the quadratic variation process of $B$ in $\RR^r$ and 
the function $\Phi^{Fz}(x) = Y(1, x ; Fz)$ and $Y(t, x; Fz)$ for 
the solution of the ordinary differential equation 
\begin{equation}\label{eq: increment ode}
\frac{d}{d\si } Y(\si) = F(Y(\si)) z,  \qquad
Y(0) = x \in M, \quad z\in \RR^r. 
\end{equation}

\paragraph{The perturbed equation: } This article studies the situation where an SDE in the sense of 
(\ref{eq: SDE unperturbed integral form})  
which is invariant on the leaf of the initial condition $x_0$ is perturbed by a
transversal smooth vector field $\e K dt$ and the stochastic differentials $\e \ti G \circ d\ti B$ 
and $\e \ti K \diamond d\ti Z_t$ with $\e>0$ 
in the limit of $\e \searrow 0$. More precisely, we denote by $X^\e$, $\e>0$ 
the solution in the sense of equation (\ref{eq: SDE unperturbed integral form}) 
of the formal perturbed system 
\begin{align}
d X^\e_t &= F_0(X^\e_t) dt +  F(X^\e_t) \diamond d Z_t + G(X^\e_t) \circ dB_t 
+ \e \Big(K(X^\e_t) dt + \ti K(\pi(X^\e_t)) \diamond d\ti Z_t + \ti G(\pi(X^\e_t)) \circ d\ti B_t\Big),\nonumber\\
X^\e_0 &= x_0,\label{eq: SDE perturbed}
\end{align} 
where the additional coefficients are defined as follows. 
\begin{enumerate}
 \item[4.] Let $K: M \ra TM$ be a smooth vector field.  
 \item[5.] Let $\ti Z = (\ti Z^1, \dots, \ti Z^r)$ be a pure jump L\'evy process with 
values in $\RR^r$ defined on $\mathbf{\Om}$ and L\'evy measure $\nu'$ satisfying 
\[
\int_{\RR^r} (1\wedge \|z\|^2) \nu'(dz) < \infty \qquad \mbox{ and }\quad\nu'(\{0\}) = 0  
\]
and $\ti K \in \cC^2(V, L(\RR^r, TM))$.
 \item[6.] Let $\ti B = (\ti B^1, \dots, \ti B^r)$ be an $\RR^r$-valued Brownian motion defined on $\mathbf{\Om}$ 
 and $\ti G \in \cC^2(V, L(\RR^r, TM))$. 
 \end{enumerate}
 We assume that the vector fields $K$, $(DK) K$, $\ti K$, $(D \ti K) \ti K$, $\ti G$ and $(D\ti G) \ti G$ are globally Lipschitz 
 continuous with Lipschitz constant $\ti \ell>0$. 

\begin{thms}[\cite{KPP95}, Theorem 3.2 and 5.1]
Under the preceding setup in particular items a), b) and \mbox{1.- 3.,} there is a unique semimartingale $X$ 
which is a strong global solution of (\ref{eq: SDE}) on $\mathbf{\Om}$
in the sense of equation (\ref{eq: SDE unperturbed integral form}). It has a c\`adl\`ag version and 
is a (strong) Markov process. 
\end{thms}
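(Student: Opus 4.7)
My plan is to reduce the statement directly to Theorems 3.2 and 5.1 of \cite{KPP95}, which treat existence, path-uniqueness, and the strong Markov property for Marcus canonical SDEs driven by general semimartingales with Lipschitz coefficients, posed in a Euclidean ambient space. The two tasks are: embed the problem into $\RR^m$ in a way that preserves the Lipschitz structure, and then argue that the Euclidean solution never leaves $M$ (nor even the initial leaf $L_{x_0}$).

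First I would use the Whitney embedding $M\hookrightarrow \RR^m$ recalled in Subsection~\ref{subsec: setup} to regard $F_0, F, G$ as defined on the closed subset $M\subset\RR^m$ and then extend them to globally Lipschitz vector fields on all of $\RR^m$ with the same Lipschitz constant $\ell$ (Kirszbraun extension applied componentwise). The same applies to the second-order combinations $(DF_0)F_0$, $(DF)F$, $(DG)G$, which are assumed globally Lipschitz on $M$ and are precisely the objects appearing in the Marcus correction
\[
\Phi^{Fz}(x) - x - F(x)z = \tfrac{1}{2}(DF)F(x)\,z^{\otimes 2} + O(\|z\|^3)
\]
used in \cite{KPP95} to control the jump-correction series in (\ref{eq: SDE unperturbed integral form}). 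With these extensions in hand, together with the second-moment condition (\ref{eq: second order moment}) on $\nu$ and the independent Brownian driver $B$, Theorem~3.2 of \cite{KPP95} applies to (\ref{eq: SDE unperturbed integral form}) in $\RR^m$ and furnishes a unique c\`adl\`ag adapted strong solution $X$ as a semimartingale in $\RR^m$.

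The second step is to show that this $\RR^m$-solution actually takes values in $M$ and in fact in the leaf $L_{x_0}$. By hypothesis $F_0(x), G(x)e_i\in T_xL_x$ for all $x\in M$ and all basis vectors $e_i$, so on inter-jump intervals the continuous Stratonovich equation (\ref{eq: SDE unperturbed integral form}) is driven by vector fields tangent to $\fM$ and hence preserves the leaf of the current position by standard Stratonovich-calculus-on-manifolds arguments (the correction $\tfrac12(DG)G$ is the standard It\^o--Stratonovich term and is automatically tangent to the leaf since it is the Lie derivative of $G$ along $G$). At a jump time $s$ the increment is by construction $\Phi^{F\Delta_sZ}(X_{s-})-X_{s-}$, and $\Phi^{Fz}(\cdot)$ is the time-$1$ map of the ODE (\ref{eq: increment ode}) whose right-hand side $F(\cdot)z$ lies in $T_xL_x$; global-in-time existence follows from Lipschitz continuity of $F$, and invariance of the leaf under this flow follows from the tangency. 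Hence $X_t\in L_{x_0}\subset M$ for all $t\ge 0$ almost surely, so the Euclidean solution is the desired foliated solution. Theorem~5.1 of \cite{KPP95} then promotes path-uniqueness plus the time-homogeneity of $(Z,B)$ to the (strong) Markov property.

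The one step I expect to demand some care is the compatibility of the Kirszbraun extension with the second-order Lipschitz requirement: one must check that extending $F$ and extending $(DF)F$ separately yields coefficients under which the Marcus correction series still converges absolutely, so that the $\RR^m$-solution produced by \cite{KPP95} literally coincides on $M$ with the one defined intrinsically via (\ref{eq: increment ode}). Since both extensions can be taken to agree on a tubular neighborhood of $M$ and the process never leaves $M$ by the leaf-invariance argument above, this reduces to a routine localization argument, after which all claims are inherited verbatim from \cite{KPP95}.
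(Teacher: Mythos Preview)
The paper does not give its own proof of this statement: it is presented as a direct citation of Theorems~3.2 and~5.1 in \cite{KPP95}, with no argument supplied beyond the reference. Your proposal to reduce to those theorems via the Whitney embedding and a Lipschitz extension of the coefficients to $\RR^m$ is exactly the intended route, and your verification that the hypotheses of \cite{KPP95} are met is appropriate.

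One minor point of comparison: the leaf-invariance argument you fold into your proof is not actually part of this theorem's content. The paper treats it separately, in the paragraph following Proposition~2.3, by invoking the Marcus chain rule (Proposition~4.2 of \cite{KPP95}) and the support result (Proposition~4.3 of \cite{KPP95}) to conclude that $X_t(x_0)\in L_{x_0}$ almost surely. Your direct argument via tangency of the Stratonovich drift and the ODE flow $\Phi^{Fz}$ is equivalent and arguably more self-contained. The concern you flag about the compatibility of separately extending $F$ and $(DF)F$ is largely moot: in the Marcus formulation of \cite{KPP95} the jump correction is defined through the ODE flow of the extended $F$ alone, and since that flow never leaves $M$ when started in $M$ (by your tangency argument), the extension of $(DF)F$ off $M$ is never actually evaluated along the solution.
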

\begin{rem}
 It is obvious that under the preceding setup in particular items a), b) and \mbox{1.-6.,} 
there is also a unique strong solution $X^\e$ on $\mathbf{\Om}$ of 
equation (\ref{eq: SDE perturbed}) in the analogous sense of equation 
(\ref{eq: SDE unperturbed integral form}) and with the same properties, if $F_0$ is replaced by $F_0 +\e K$ and $F$ by 
$(F, \e \ti K)$, $G$ by $(G, \e \ti G)$, $B$ by $(B, \ti B)$ and $Z$ by $(Z, \ti Z)$ accordingly. 
\end{rem}
We state the crucial chain rule for the Marcus equation given in \cite{KPP95} (Proposition 4.2).  
\begin{prp}
Let $Z$ and $F_0, F$ satisfy items 1) and 2) 
and $X$ be the solution of (\ref{eq: SDE unperturbed integral form}) with initial condition $x_0$ 
with $G=0$. Then for any $\Psi\in \cC^2(\RR^d)$ we have $\PP$-a.s. for all $t\gqq 0$
\begin{align*} 
\Psi(X_t) = \Psi(x_0) + \int_0^t (D\Psi)(X_s) F_0(X_s) ds + \int_0^t (D\Psi)(X_{s-})F(X_{s-}) \diamond dZ_s. 
\end{align*}
\end{prp}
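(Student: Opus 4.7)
The plan is to apply the classical Itô formula for c\`adl\`ag semimartingales to $\Psi(X_t)$, and then collapse the jump-correction terms into the Marcus form using the fundamental theorem of calculus for the ODE flow $\Phi^{Fz}$. Since $G=0$, the process $X$ has no continuous local martingale part, and (\ref{eq: SDE unperturbed integral form}) reduces to
\begin{equation*}
X_t = x_0 + \int_0^t F_0(X_s)\,ds + \int_0^t F(X_{s-})\,dZ_s + \sum_{0<s\lqq t}\bigl[\Phi^{F\Delta_s Z}(X_{s-})-X_{s-}-F(X_{s-})\Delta_s Z\bigr],
\end{equation*}
so in particular $\Delta X_s = \Phi^{F\Delta_s Z}(X_{s-})-X_{s-}$. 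The standard It\^o formula for $\Psi\in\cC^2$ then gives
\begin{equation*}
\Psi(X_t) = \Psi(x_0) + \int_0^t(D\Psi)(X_{s-})\,dX_s + \sum_{0<s\lqq t}\bigl[\Psi(X_s)-\Psi(X_{s-})-(D\Psi)(X_{s-})\Delta X_s\bigr],
\end{equation*}
the second-derivative term being absent because $X$ has no continuous martingale part.

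Next I would substitute the integral decomposition of $X$ into $\int(D\Psi)(X_{s-})\,dX_s$ and collect all jump-related contributions: the Marcus correction inherited from $X$, the It\^o compensator $-(D\Psi)(X_{s-})\Delta X_s$, and the raw increment $\Psi(X_s)-\Psi(X_{s-})$. The linear pieces $(D\Psi)(X_{s-})[\Phi^{F\Delta_s Z}(X_{s-})-X_{s-}]$ cancel, leaving
\begin{equation*}
\Psi(X_t) = \Psi(x_0) + \int_0^t(D\Psi)(X_s)F_0(X_s)\,ds + \int_0^t(D\Psi)(X_{s-})F(X_{s-})\,dZ_s + S_t,
\end{equation*}
where $S_t := \sum_{0<s\lqq t}\bigl[\Psi(\Phi^{F\Delta_s Z}(X_{s-}))-\Psi(X_{s-})-(D\Psi)(X_{s-})F(X_{s-})\Delta_s Z\bigr]$. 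The decisive identification now comes from the fundamental theorem of calculus along (\ref{eq: increment ode}): for every $x\in M$ and $z\in\RR^r$,
\begin{equation*}
\Psi(\Phi^{Fz}(x))-\Psi(x) = \int_0^1 (D\Psi)(Y(u,x;Fz))\,F(Y(u,x;Fz))\,z\,du,
\end{equation*}
so each summand of $S_t$ is exactly the Marcus correction attached to the integrand $(D\Psi)(X_{s-})F(X_{s-})$ at the $s$-th jump. By the very definition of the Marcus canonical integral \cite{KPP95}, the sum $\int_0^t(D\Psi)(X_{s-})F(X_{s-})\,dZ_s + S_t$ equals $\int_0^t(D\Psi)(X_{s-})F(X_{s-})\diamond dZ_s$, which is the claimed identity.

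The main obstacle is the absolute convergence of $S_t$ on finite intervals, needed both to apply the It\^o formula in the compensated form above and to rearrange the jump sums termwise. A second-order Taylor expansion of $u\mapsto\Psi(Y(u,x;Fz))$ combined with the $\cC^2$-regularity of $\Psi$ and $F$ and the standing Lipschitz bounds on $F$ and $(DF)F$ yields $|\Psi(\Phi^{Fz}(x))-\Psi(x)-(D\Psi)(x)F(x)z|\lqq C\|z\|^2$ uniformly for $x$ on compacts and $\|z\|\lqq 1$; integrability against the L\'evy measure then follows from (\ref{eq: second order moment}), while the large-jump regime $\|z\|>1$ contributes only finitely many summands on $[0,t]$ almost surely since $\nu(\{\|z\|>1\})<\infty$. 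This justifies all the formal manipulations pathwise and concludes the proof.
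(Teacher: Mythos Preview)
Your argument is correct and is essentially the standard derivation of the Marcus chain rule as in \cite{KPP95}, Proposition~4.2: apply the classical It\^o formula to the semimartingale $X$ (with vanishing continuous martingale part since $G=0$ and $Z$ is pure jump), cancel the linear jump contributions, and use the fundamental theorem of calculus along the flow $u\mapsto Y(u,x;Fz)$ to identify the residual jump sum $S_t$ as precisely the Marcus correction term for the integrand $(D\Psi)F$. The paper itself does not prove this proposition but simply quotes it from \cite{KPP95}; your write-up reconstructs that proof, including the summability check via the second-order Taylor bound and (\ref{eq: second order moment}), so there is nothing to add.
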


\noindent A direct consequence of the chain rule 
is the following support property given as Proposition 4.3 in \cite{KPP95}. 
Each jump increment of the noise $\Delta Z$ 
is mapped to an increment $\Phi^{F\Delta Z}$ 
of the solution $X$ of (\ref{eq: SDE}).  
The increment $\Phi^{F\Delta Z}$ 
follows the integral curve $Y$ 
in (\ref{eq: increment ode}) along the vector field $Fz$ 
which is tangent to the (smooth) manifold $L_{x_0}$ and hence $M$. 
A standard support theorem for ODEs applied in the proof of this result 
then yields that the solution after the jump once again is an element of $L_{x_0}$ and hence $M$. 
This support property is maintained for an additional 
Stratonovich component, the reasoning is standard. 
For the solution $X^\e, \e>0$ given in (\ref{eq: SDE perturbed}) 
this remains obviously true only for $M$.  
Under the aforementioned conditions these lines of thought 
lead to the following foliated structure of $X$: 
$x_0\in M$ implies $X_t(x_0) \in L_{x_0}$, $\PP$-a.s. for all $t\gqq 0$.
We shall call a solution of an SDE of the type (\ref{eq: SDE}) which admits a foliated solution 
a \textit{foliated L\'evy diffusion}.

In addition, we obtain that $x_0 \in M$ and $\e>0$ imply $X^\e_t(x_0) \in M$, $\PP$-a.s. for all $t\gqq 0$. 

\subsection{The main result} 

\begin{enumerate}
\item[\textbf{Hypothesis 1: Compactness and Integrability.} ]
\begin{enumerate}
 \item Any leaf $L_{x_0}\in \fM$, $x_0 \in M$, is compact and the map 
$x_0 \mapsto \diam L_{x_0}$ is Lipschitz continuous in the embedding space of~$M$. 
 \item There is a constant $p\gqq 2$ such that the L\'evy measures $\nu$ (of $Z$) and $\nu'$ (of $\ti Z$) satisfy 
\begin{align*}
&\int_{\RR^r} \|z\|^{p} \,\nu(dz) < \infty \qquad \mbox{ and } \qquad \int_{\RR^r} \|z\|^{2p} \,\nu'(dz) < \infty.
\end{align*}
\end{enumerate}

 \item[\textbf{Hypothesis 2: Existence of invariant measures on each leaf.} ] 
\begin{enumerate}
  \item The solution $X$ of (\ref{eq: SDE}) has for any initial condition $x_0\in M$ 
  a unique invariant measure $\mu_{x_0}$ with $\supp(\mu_{x_0}) =~L_{x_0}$.
 \item For $v_0 = \pi(x_0)$ being the vertical coordinate of some $x_0\in M$ 
 we define for $h: M \ra T\fM$ with $h(x) \in T_x M$  
\begin{equation}\label{def: average}
Q^h(v_0) := \int_{L_{x_0}} h(u) \mu_{x_0}(du) 
\end{equation}
and suppose that for any such function $h$, which is globally Lipschitz continuous 
the function 
$\RR^d \supset V \ni v \mapsto Q^{h}(v) \in \RR^d$
is globally Lipschitz continuous. 
\end{enumerate}
\end{enumerate}
\begin{rem}
Note that $\mu_{x_0}$ (just as $L_{x_0}$) only depends on the vertical component $\pi(x_0)$. 
\end{rem}

\noindent Hypothesis 2 ensures 
that for each  $x_0\in M$, $v_0 = \pi(x_0)\in V$ the stochastic differential equation  
\begin{equation}\label{def: w}
dw = Q^{\pi K} \left(w \right) dt + \ti K(w) \diamond d\ti Z_t + \ti G(w) \circ d\ti B_t, \qquad w(0) = v_0\in V
\end{equation}
has a unique strong solution $w = (w(t, v_0))_{t\in [0, T_\infty)}$ on $\mathbf{\Om}$, $T_\infty$ being the 
first exit time of $w$ from~$V$.

\begin{itemize}
 \item[\textbf{Hypothesis 3: Ergodicity in terms of $L^p$.}] 
Let Hypotheses 1 and 2 be satisfied for some $p\gqq 2$. 
We assume that there exists a bounded, continuous, decreasing function $\eta: [0,\infty) \ra [0,\infty)$ 
with $\eta(t) \searrow 0$ as $t\ra \infty$ such that for any $x_0 \in M$ 
\begin{equation} \label{def: function eta}
\left( \mathbb{E} \left| \frac{1}{t}\int_0^t \pi K (X_s(x_0) )\, ds - 
Q^{\pi K} (\pi (x_0)) \right|^p \right)^{\frac{1}{p}} \lqq \eta(t), \qquad \mbox{ for all }t\gqq 0.
\end{equation}
 \end{itemize}
\noindent{\it On the rate of convergence.} 
Results about rates of convergence go back to Pascal \cite{MP86} 
and Kolmogorov \cite{KO54}, see also 
\cite{Ar83} and references therein. 
Recent developments for L\'evy driven dissipative systems 
can be found in Kulik \cite{Ku09}, see also \cite{Ba08, Br14,Du09,Vi09}. 
In \cite{Ku09,Do95} for instance the authors develop 
generic methods to establish exponential convergence 
to an ergodic limit measure in terms of the total variation distance 
for the solution of an SDE driven by a pure jump Markov process. 
In this context the exponential rate of convergence 
in total variation implies that the rate of convergence in (\ref{def: function eta}) is of order $1/t^p$. 
In Subsection \ref{subsec:example} we provide the simple example 
of a L\'evy process on the unit circle for which we calculate the precise rate of convergence, 
which is of the same type. 
For Brownian diffusion processes H\"ormander's hypoellipticity condition 
ensures exponential rates of convergence in total variation, 
see for instance \cite{Be87,Bi81, Ha11} and references therein. 
On the other hand, there is no standard rate of convergence for general Markovian systems 
in the ergodic theorem, see for instance Krengel \cite{Krengel} or Kakutani and Petersen 
\cite{Kakutani-Petersen}. Therefore, it is natural to formulate the result in terms of 
the function $\eta$ following the approach in Freidlin and Wentzell \cite{FW84}. 

The main result of the article is proved in Section 5 and reads as follows. 

\begin{thms}\label{thms: main result 1}
Let Hypotheses 1, 2 and 3 being satisfied for some $p\gqq 2$. 
Then we have for any $\la \in (0,1)$ and $x_0 \in M$ positive 
constants $\e_0\in (0,1)$, $C>0$ and $c>0$ 
 such that for any $\e\in (0, \e_0]$ and $T \in [0, 1]$ 
\begin{align}
\left(\EE\left[\sup_{t\in [0, T]} |\pi\big(X^\e_{\frac{t}{\e}\wedge \tau^{\e}}(x_0)\big) 
- w(t)|^p\right]\right)^{\frac{1}{p}} 
\lqq C T \left[ \e^{\la} + \eta \left( c T|\ln \epsilon | \right) \right],\label{eq: result Cp}
\end{align}
where $X^\e$ is the solution (\ref{eq: SDE perturbed}) and $w$ the solution of (\ref{def: w}), 
and $\tau^\e = S^\e \wedge T_\infty$. 
$S^\e$ is the first exit time of $X^\e(x_0)$ from $U$ in a) 
and $T_\infty$ is the first exit time of $w$ from $V$.\\
\end{thms}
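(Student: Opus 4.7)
The plan is to apply the Marcus chain rule (recalled in the proposition above) to the projection $\pi$ on the neighborhood $U$, which is valid up to the exit time $\tau^\epsilon$. Because $F_0, F, G$ are tangential to the leaves, they are annihilated by $D\pi$; only the three $\epsilon$-transversal perturbations survive, and after the slow-time change of variables $u=\epsilon s$ we obtain
\begin{equation*}
\pi(X^\epsilon_{t/\epsilon\wedge\tau^\epsilon}) - w(t) = \int_0^t [Q^{\pi K}(\pi X^\epsilon_{u/\epsilon}) - Q^{\pi K}(w(u))]\,du + \mathcal{E}^\epsilon(t) + \mathcal{R}^\epsilon(t),
\end{equation*}
where $\mathcal{E}^\epsilon(t):=\int_0^t [\pi K(X^\epsilon_{u/\epsilon}) - Q^{\pi K}(\pi X^\epsilon_{u/\epsilon})]\,du$ is the averaging error, and $\mathcal{R}^\epsilon(t)$ collects the differences of the transversal Marcus and Brownian integrals of $X^\epsilon$ and $w$, whose coefficients depend Lipschitz-continuously on the slow variables $\pi X^\epsilon$ and $w$. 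Burkholder-Davis-Gundy for the pure-jump Marcus and Brownian parts, together with the Lipschitz continuity of $\tilde K$, $\tilde G$ and of $Q^{\pi K}$ (Hypothesis~2), bounds the first drift integral and $\mathcal{R}^\epsilon$ by $C\int_0^t\EE\sup_{r\le u}|\pi X^\epsilon_{r/\epsilon\wedge\tau^\epsilon}-w(r)|^p\,du$. A Gronwall argument then reduces the theorem to an $L^p$-bound of the correct size on $\sup_{t\in[0,T]}|\mathcal{E}^\epsilon(t)|$.

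\textbf{Discretization of the averaging error.} The error $\mathcal{E}^\epsilon$ is controlled on a uniform partition $0=t_0<t_1<\cdots<t_N=T$ of spacing $\Delta=\Delta(\epsilon)$. On each block we freeze $v_k:=\pi(X^\epsilon_{t_k/\epsilon})$ and introduce an auxiliary unperturbed foliated L\'evy diffusion $\widehat X^{(k)}$ solving (\ref{eq: SDE unperturbed integral form}) on the leaf $L_{X^\epsilon_{t_k/\epsilon}}$ with initial condition $X^\epsilon_{t_k/\epsilon}$. By the support property recalled in Subsection~\ref{subsec: setup}, $\widehat X^{(k)}$ remains on this fixed compact leaf, so Hypothesis~3 applies to it conditionally on $\fF_{t_k/\epsilon}$ over any fast window. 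Telescoping $\mathcal{E}^\epsilon(T)$ through $\widehat X^{(k)}$ splits it into three pieces: (i)~a block-comparison sum measuring how fast $X^\epsilon_{u/\epsilon}$ diverges from $\widehat X^{(k)}_{(u-t_k)/\epsilon}$; (ii)~the genuine ergodic sum $\sum_k\int_{t_k}^{t_{k+1}}[\pi K(\widehat X^{(k)}_{(u-t_k)/\epsilon})-Q^{\pi K}(v_k)]\,du$, controlled block-wise by $\Delta\,\eta(\Delta/\epsilon)$ via Hypothesis~3 and summing to $T\,\eta(\Delta/\epsilon)$; and (iii)~a slow-oscillation sum dominated by the Lipschitz constant of $Q^{\pi K}$ times the $L^p$-modulus of continuity of $\pi X^\epsilon$ on each block, which is $O(\Delta)$ since the transversal perturbation has scale $\epsilon$ and acts over a fast window of length $\Delta/\epsilon$. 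Calibrating $\Delta$ proportional to $\epsilon T|\ln\epsilon|$ converts the ergodic piece into the claimed $\eta(cT|\ln\epsilon|)$ factor.

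\textbf{Main obstacle: heavy-tailed block comparison.} The critical estimate is contribution~(i), which requires the $L^p$-control of $\sup_{u\in[t_k,t_{k+1}]}|X^\epsilon_{u/\epsilon}-\widehat X^{(k)}_{(u-t_k)/\epsilon}|$. The two processes are driven by identical $B$ and $Z$ from the same starting point, but $X^\epsilon$ additionally feels the $\epsilon$-perturbation and leaves the starting leaf, coupling the Lipschitz coefficients $F_0, F, G$. Iterating the na\"ive pointwise Marcus comparison (\ref{eq: exponential increment}) produces the exponential-in-jump-size factor $e^{\ell\|\Delta Z\|}$, whose expectation would demand $\int_{\|z\|>1}\exp(\kappa\|z\|)\,\nu(dz)<\infty$ -- precisely the restrictive moment assumption of \cite{HR} that we aim to remove. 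The key insight developed in Section~3 around Lemma~\ref{lem: invariance lemma} is to Taylor-expand $\Phi^{F\Delta Z}(x)-\Phi^{F\Delta Z}(y)$ one order further; the second-order remainder is governed by the composite field $DF(Y)F(Y)\Delta Z$ along the integral curve $Y=Y(\cdot;x;F\Delta Z)$. Positive invariance of $Y$ on the fixed compact leaf (Hypothesis~1.a) makes this composite field uniformly bounded in jump times, yielding the quadratic-in-$\|\Delta Z\|$ estimate (\ref{eq: polynomial increment}). Taking expectations integrates $\|z\|^2$ against $\nu$ for the remainder and $\|z\|$ for the compensated linear part; together with BDG for the pure-jump martingale, the $p$-th moment estimate closes using only $\int\|z\|^p\,\nu(dz)<\infty$ from Hypothesis~1.b. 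Carried through the block analysis together with the $\epsilon$-scale of the perturbation, the accumulated block contribution is of order $CT\epsilon^\lambda$ under the calibration $\Delta\sim\epsilon T|\ln\epsilon|$. Combining this with the Gronwall closure of the first paragraph yields the bound of Theorem~\ref{thms: main result 1}.
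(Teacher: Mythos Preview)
Your proposal is correct and matches the paper's own proof closely: the chain-rule-plus-Gronwall reduction to the averaging error is exactly Section~5 (your $\mathcal{R}^\epsilon$ corresponds to $O_1+O_2+O_3$ there), the three-piece discretization of $\mathcal{E}^\epsilon$ with step $\Delta\sim \epsilon T|\ln\epsilon|$ is Proposition~\ref{prp: componente vertical} and its $A_1,A_2,A_3$ splitting, and the heavy-tailed block comparison via the quadratic-in-jump estimate is Proposition~\ref{lem: preliminary} built on Lemma~\ref{lem: invariance lemma}. Two minor sharpenings: the Marcus correction sum inside your $\mathcal{R}^\epsilon$ also requires Lemma~\ref{lem: invariance lemma} (applied to $\tilde K$) to avoid the exponential factor, which the paper carries out explicitly for $O_3$ in Section~5; and the mechanism behind Lemma~\ref{lem: invariance lemma} is not positive invariance on a compact leaf but the assumed global Lipschitz continuity of $(DF)F$, from which boundedness of $(D^2F)F$ follows---leaf compactness enters separately, e.g.\ to bound $|u^\epsilon-u|$ by the leaf diameter in the horizontal estimate.
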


\begin{rem}
Since our main result focuses on L\'evy processes with only p-th moments, 
the coefficient $G$ can be set to $0$ since no additional 
difficulty to the proof of \cite{GR13} shows up. 
The coefficient $\ti G$ will be also dropped in the proof. 
Including it in the proofs of the sections \ref{sec: transversal perturbations} and 
\ref{sec: proof of the main result} is straight-forward. 
\end{rem}

\begin{rem}
In the proofs of Section 3, 4 and 5, 
it will turn out that under the preceding 
assumptions none of the constants depends 
on the precise shape of $V$ and hence $U$. 
Hence without loss of generality and for the 
sake of readability we may assume in the proofs that $V = \RR^d$. 
\end{rem}

\subsection{Example: Perturbed L\'evy rotations of the unit circle}\label{subsec:example}
We illustrate this phenomenon in $M= \RR^2 \setminus \{0\}$
with the 1-dimension horizontal circular foliation of $M$ where the leaf
passing through a point $x_0\in M$ is given by the (nondegenerate) horizontal 
circle
\[
L_{x_0} = \{ (
\|x_0\|
\cos \theta, \|x_0\| \sin 
\theta),\quad \theta \in [0,2 \pi) \}.
\]
Let the process $Z = (Z_t)_{t\gqq 0}$ be any pure jump L\'evy process with second moments. 
The L\'evy-It\^o decomposition of $Z$ yields almost surely for any $t\gqq 0$ 
\begin{equation}\label{eq: Levy-Ito}
Z_t = \int_0^t \int_{|z|\lqq 1} z \ti N(dsdz) + \int_0^t \int_{|z|> 1} z N(dsdz)
\end{equation}
where $N$ is the random Poisson measure with intensity measure $dt \otimes \nu$ and
$\ti N$ denotes its compensated counterpart. 
Consider the foliated linear SDE on
$M$ consisting of random rotations:
\begin{equation}\label{eq: SDE example}
dX_t = \Lambda X_t ~\diamond dZ_t, \qquad X_0 = x_0,\qquad \mbox{ with }\Lambda 
= \left( 
\begin{array}{ccc}
0 & -1 \\
1 & 0 
\end{array} 
\right). 
\end{equation}
Equation (\ref{eq: SDE example}) is defined as follows.  
Note that for some jump increment of $Z$, $z\in \RR$, $z\neq 0$, 
we have to consider the solution flow $\Phi$ of the equation 
\[
\frac{d}{d\si }Y(\si) = F(Y(\si)) z, \qquad Y(0) = (x,y), \qquad \mbox{ where }F(x,y) = \Lambda (x,y)^T,
\]
obtained by a simple calculation as
\[
\Phi^{F z}(x,y) = Y(1; (x,y)) 
= \left(\begin{array}{c}  x \cos(z) - y \sin(z) \\ x \sin(z) + y \cos(z) \end{array}\right),
\]
such that 
\begin{align*}
X_t &= x_0 + \int_0^t \Lambda X_{s-} z \ti N(dsdz) +  \sum_{0 < s\lqq t} (\Phi^{F \Delta_s Z}(X_{s-}) - X_{s-}- F(X_{s-}) \Delta_s Z).
\end{align*}
The chain rule of the Marcus integral, Proposition 4.2 in \cite{KPP95}, 
states for $\|(x,y)^T\|^2 := x^2 + y^2$  
\begin{align}\label{ex: non radial part}
d \|X_t\|^2 = -2 X_{t-} \Lambda X_{t-} \diamond d Z_t = 0.
\end{align}
In fact, $X$ can be equally defined as the projection of $Z$ on the unit circle. 
If we identify the plane where $X$ takes its values with the complex plane $\CC$ 
we obtain $X_ t= e^{i Z_t}$. 
By the L\'evy-Chinchine representation of the characteristic function of $Z$ we obtain for any $p\in \RR$
\begin{align*}
\EE[X_t^p ]&= \EE[e^{ipZ_t}] = \exp(t \Psi(p)),  \quad \mbox{ where }& \quad\Psi(p)= \int_{\RR^d} (e^{ipz}-1-izp\ind\{|z|\lqq 1\}) \nu(dz).
\end{align*}
The invariant measures $\mu_{x_0}$ in the leaves $L_{x_0}$ passing through  points $x_0\in M$
are therefore given by normalized Lebesgue measures in the circle $L_{x_0}$ centered in 
$0$ with radius $\|x_0\|$. 
We are interested in the effective behavior of a small
transversal perturbation of order $\epsilon$:
\[
 dX^{\epsilon}_t =  
\Lambda X^{\epsilon}_t \;\diamond dZ_t  +
 \epsilon K (X^{\epsilon}_t)\ dt + \e d\ti Z_t
\]
with initial condition $x_0=(1,0)$, where $\ti Z$ is a pure jump L\'evy process with L\'evy measure $\nu'$ 
satisfying 
\[
\int_{\|z\|>1}\|z\|^4 \nu'(dz) < \infty.  
\]
We shall consider two classes of perturbing vector fields $K$. 

\noindent (A) Constant perturbation $\e K=\e (K_1, K_2)\in \RR^2$. 
This example was carried out in \cite{HR} for the Gamma process on the unit sphere, with $\ti Z = 0$. 
The case of a general L\'evy process is virtually identical. 
The main result in this case reads as follows. 
For any L\'evy process $Z$ with $\EE[|Z_1|^p]<\infty$, $p\gqq 2$ and $\la\in (0,1)$ 
we obtain $\e_0\in (0,1)$ and $T_0>0$ such that for any $T\in [0, T_0]$ and $\e \in (0, \e_0]$ we have 
\[
\left[\mathbb{E}\left( \sup_{s\in [0, T]} 
\left| \pi_r (X^{\epsilon}_{\frac{s}{\epsilon}}) - 1 
\right|^p\right)\right]^{\frac{1}{p}}\lqq \e^\la T.
\]

\bigskip 

\noindent (B) General linear perturbation $\e K(x,y) =  \e A (x,y)^T = \e (a x + by, cx +dy)^T$ 
for a given matrix $A \in \RR^{2\otimes 2}$, which 
is obviously globally Lipschitz continuous and smooth. 
The radial component of the vector field $K$ is then given by 
\begin{align*}
\pi_r K(\theta, r) 
&= r \langle (a \sin(\theta) + b \cos(\theta), c \sin(\theta) + d\cos(\theta))^T, (\sin(\theta), \cos(\theta))^T\rangle\\
&= r \big(a \sin^2(\theta) + d \cos^2(\theta) + (b+c) \sin(\theta)\cos(\theta)\big),
\end{align*}
where $\theta$ is the angular coordinate of $(x, y)$ 
whose distance to the origin is $r$. 
Hence the average of this component with respect to the invariant uniform 
measure on the leaves (circles) is given by  
\[ 
Q^{\pi_r K}(\theta, r) = \frac{1}{2\pi} \int_0^{2 \pi} \Pi_r K(\theta, r) d\theta = \frac{a+d}{2}r  
\]
for leaves $L_{x_0}$ with radius~$r$. 
We verify the convergence (\ref{def: function eta}) of Hypothesis 2 
for the radial component and $p=2$. 
Let $\ti Z$ be a L\'evy process in $\RR^2$ with finite fourth moment. 
Elementary but lengthy calculations 
which can be found in Appendix \ref{subsec: appendix1} 
show that
\begin{align}
&\EE\Big[\Big|\frac{1}{t}\int_0^t \pi_r K(X_s) ds - Q^{\pi_r K}(x_0)\Big|^2\Big]^\frac{1}{2}
\stackrel{t\ra \infty}{\lra} 0,\label{eq: convergencia Student L2}
\end{align}
where the rate of convergence $\eta$ is of order $1/\sqrt{t}$ as $t\nearrow\infty$.

For an initial value $x_0 = (r_0 \cos(\theta_0), r_0\sin(\theta_0))$ 
the transversal system stated in Theorem \ref{thms: main result 1} is then 
$w(t)= r_0e^{\frac{a+d}{2}t} r_0$. 
Hence the result guarantees that the radial part 
$\pi_r \big( X^{\epsilon}_{\frac{t}{\epsilon}\wedge \tau^{\epsilon}} \big)$ 
on the accelerated time scale $\frac{t}{\epsilon}$ has 
a local behavior close to the exponential $e^{\frac{a+d}{2}t}$ in the sense that for any $\la\in (0,1)$ 
there are constants $C, c_\la>0$ and $\e_0\in (0,1)$ such that for any $T\in [0,1]$ and $\e\in (0, \e_0]$ 
we have 
\[
\left(\mathbb{E}\left[ \sup_{s\in [0, T]} 
\left| \pi_r  \big( X^{\epsilon}_{\frac{s}{\epsilon}\wedge \tau^{\epsilon}}(x_0) 
\big) - r_0e^{\frac{a+d}{2}s}\right|
^2\right]\right)^{\frac{1}{2}}
\lqq  C T\Big(\e^{\la}  +  (c |\ln \epsilon |)^{-\frac{1}{2}}\Big), 
\]
where $c_\la$ is given in Corollary \ref{cor: preliminary}. 
This averaging error tends to zero for fixed $T$ 
when $\epsilon\searrow 0$ and for fixed $\e$ if $T\searrow 0$.  

\section{The perturbation error} \label{sec: transversal perturbations}

In order to prove the main theorem we have to 
control the error $X^\e-X$ in terms of $L^p$. 
This result relies on the following elementary 
but in this context crucial lemma on dynamical systems, 
which yields on the right-hand side only quadratic dependence 
on the ``jump increment'' $z$. Due to its importance 
for this article we provide a sketch of proof. 

\begin{lems}\label{lem: invariance lemma}
For a globally Lipschitz continuous 
matrix-valued vector field $F \in \cC^2(\RR^{r+n}, L(\RR^{r}, \RR^{r+n}))$ 
and $z\in \RR^r$ denote by $(Y(t; x, Fz))_{t\gqq 0}$ the unique global strong solution of 
the ordinary differential equation 
\begin{equation}\label{eq: increment equation}
\frac{d Y}{dt} = F(Y)z\qquad Y(0, x, Fz) = x\in \RR^{r+n}.
\end{equation}
\begin{enumerate}
 \item[1)] Then there is a constant $C>0$ such that for any $z \in \RR^r$ and $x, y \in M$ with $Y(t; x) = Y(t;x, Fz)$ we have 
 \begin{align*}
\sup_{t\gqq 0} 
|(DF(Y(t;x))z)F(Y(t;x))z - (DF(Y(t;y))z)F(Y(t;y))z|
\lqq C~|x-y| ~\|z\|^2.
\end{align*}
 \item[2)] For any $x\in M$ we have $~\sup_{t\in [0,1]} \|DF(Y(t;x))F(Y(t;x))\| < \infty.$
\end{enumerate}
\end{lems}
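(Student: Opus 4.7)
The plan is to derive both assertions from two ingredients: a Lipschitz-in-$y$ bound for the Marcus correction vector field $\psi_z(y) := (DF(y)z)F(y)z$ with constant proportional to $\|z\|^2$, and the positive invariance of the flow $Y(\cdot\,;x,Fz)$ on the compact leaf $L_x$. Part~2) is essentially a corollary of these two ingredients: since $F(x)z \in T_xL_x$ by construction, the trajectory $Y(\cdot\,;x,Fz)$ remains in the compact set $L_x$ for all $t\geq 0$, and the continuous map $y \mapsto \|DF(y)F(y)\|$ attains a finite supremum on $L_x$, hence in particular on $\{Y(t;x):t\in[0,1]\}$.

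For Part~1), split
\[
\psi_z(y_1) - \psi_z(y_2) = \bigl(DF(y_1)-DF(y_2)\bigr)z\,F(y_1)z + DF(y_2)z\,\bigl(F(y_1)-F(y_2)\bigr)z
\]
and use the assumed global Lipschitz continuity of $(DF)F$, together with the Lipschitz continuity of $F$ (which forces $\|DF\|_\infty \leq \ell$), to conclude
\[
|\psi_z(y_1) - \psi_z(y_2)| \leq C \|z\|^2\, |y_1 - y_2|
\]
for a constant $C$ depending only on $\ell$. This quadratic-in-$\|z\|$ Lipschitz bound is the decisive replacement for the exponential-in-$\|z\|$ estimate (\ref{eq: exponential increment}) on the flow itself, and it is exactly what permits the eventual step from exponential to polynomial moment assumptions on the L\'evy driver. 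The full claim then reduces to showing
\[
\sup_{t\geq 0}|Y(t;x,Fz) - Y(t;y,Fz)| \leq C'|x-y|,
\]
uniformly in $z\in\RR^r$.

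Establishing this $t$-uniform and $z$-uniform Lipschitz bound on the flow is the main obstacle, since a naive use of Gronwall's lemma only yields the exponential factor $e^{\ell\|z\|t}|x-y|$. The way around this is to exploit positive invariance twice: both $Y(\cdot\,;x,Fz)$ and $Y(\cdot\,;y,Fz)$ remain in compact leaves whose diameters are Lipschitz in the base point by Hypothesis~1(a), so in particular beyond a threshold time of order $\log(\diam L_x/|x-y|)/(\ell\|z\|)$ the exponential Gronwall estimate is superseded by the trivial leaf-diameter bound. In the foliation chart $\varphi$, the ambient distance decomposes into a leaf-tangential and a transversal component, the latter of which is conserved by the flow because $F(\cdot)z$ is tangent to the leaves. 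Gluing the small-$t$ Gronwall regime to the large-$t$ compact-leaf regime, and using the Lipschitz dependence of $x \mapsto \diam L_x$ to re-express the constant leaf-diameter piece in terms of $|x-y|$, yields the required linear bound on $\sup_{t\geq 0}|Y(t;x)-Y(t;y)|$, after which multiplication by the Lipschitz constant $C\|z\|^2$ of $\psi_z$ finishes Part~1).
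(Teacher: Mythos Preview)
Your Lipschitz bound on $\psi_z(y) := (DF(y)z)F(y)z$ with constant proportional to $\|z\|^2$ is the right first ingredient, and Part~2) via invariance on the compact leaf is fine. (One quibble: the splitting you wrote would need $DF$ itself Lipschitz, which is not assumed. The clean way is to observe $\psi_z(y) = [(DF)F](y)\,(z\otimes z)$ and invoke the assumed global Lipschitz continuity of $(DF)F$ directly; this is also how the paper arrives at $\|(D^2F)F\|_\infty < \infty$.)

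The genuine gap is your flow estimate $\sup_{t\gqq 0}|Y(t;x,Fz)-Y(t;y,Fz)| \lqq C'|x-y|$ uniformly in $z$. The gluing argument does not close: in the ``large-$t$ compact-leaf regime'' you only get $|Y(t;x)-Y(t;y)| \lqq \diam L_x + \dist(L_x,L_y) + \diam L_y$, which is of order one, not of order $|x-y|$. Hypothesis~1(a) says $|\diam L_x - \diam L_y| \lqq C|x-y|$, \emph{not} $\diam L_x \lqq C|x-y|$, so it cannot ``re-express the constant leaf-diameter piece in terms of $|x-y|$''. For a concrete obstruction, take $F(\cdot)z$ tangent to the unit circle with a hyperbolic zero (in angle coordinates $\dot\theta = z\sin\theta$): initial angles $\pm\e$ on the same leaf separate to order the leaf diameter no matter how small $|x-y| = 2\sin\e$ is. Hence the composition strategy ``Lipschitz of $\psi_z$ times Lipschitz of the flow'' cannot yield the stated bound. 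The paper avoids this entirely: it never estimates $|Y(t;x)-Y(t;y)|$. Instead it uses the identity $\frac{d}{dt}F_z(Y(t;x)) = (DF_z)F_z(Y(t;x))$ and applies the mean value theorem to $x\mapsto (DF_z)F_z(Y(t;x))$ in a way that produces the integrand $(D^2F_z)(Y(t;\xi_\si))\big[(y-x),\,F_z(Y(t;\xi_\si))\big]$, so that the bound comes from $\|(D^2F)F\|_\infty\|z\|^2|x-y|$ directly, without any flow-Jacobian factor. The ODE structure is used to \emph{cancel} the dependence on $D_xY$, not to bound it --- that is the structural idea your approach is missing.
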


\begin{proof}
We lighten notation and omit the parameter $Fz$ in $Y$ and write $Fz = F_z$. 
By the change of variables we have for any $x \in M$, $t\gqq 0$ that 
\begin{align*}
F_z(Y(t;x)) = F_z(x) + \int_0^t DF_z(Y(s;x)) F_z(Y(s;x)) ds. 
\end{align*}
Differentiating in $t$ yields 
\begin{align*}
\frac{d}{dt} F_z(Y(t;x)) = DF_z(Y(t;x)) F_z(Y(t;x)).
\end{align*}
Hence for any $x,y \in L$ the mean value theorem and equation (\ref{eq: increment equation}) yield
\begin{align*}
& DF_z(Y(t;x)) F_z(Y(t;x)) - DF_z(Y(t;y)) F_z(Y(t;y)) \\
&= \int_0^1 \frac{d}{dt} DF_z(Y(t;x+\si (y-x))) (y-x) d \si \\
&= \int_0^1 (D^2 F_z)(Y(t;x+\si (y-x))) ((y-x), F_z(Y(t;x+\si (y-x))) d\si.
\end{align*}
Since $(DF) F$ is Lipschitz continuous and 
$F\in \cC^2$ the operator $D((DF) F)$ is uniformly bounded. 
The chain rule $D((D F) F) = (D^2 F) F + (DF)(DF)$ yields 
\[
\|(D^2 F) F\|_\infty \lqq \|(D^2 F) F + (DF)(DF)\|_\infty + \|(DF)(DF)\|_\infty <\infty,
\]
where $\|DF\|$ is uniformly bounded since $F$ is globally Lipschitz continuous and $F\in \cC^1$. 
Therefore 
\begin{align*}
&|DF_z(Y(t;x)) F_z(Y(t;x)) - DF_z(Y(t;y)) F_z(Y(t;y))| \lqq \|(D^2F) F\| \|z\|^2 |x-y|
\lqq C |x-y|\|z\|^2.
\end{align*}
Since the right-hand side is independent of $t$ we take the supremum as claimed in statement~1). 
Statement~2) is a straight-forward consequence of the product rule.  
\end{proof}

\begin{prp}\label{lem: preliminary} 
Let the assumptions of Subsection 2.1 and Hypotheses 1, 2 and~3 be satisfied for some $p\gqq 2$. 
Then for any Lipschitz function $h: M \ra \RR$ 
there exist positive constants $\e_0, k_0, k_1, k_2$ with $k_0< 1$ 
such that for all $T\gqq 0$ satisfying $\e_0 T\lqq k_0$, 
$\e\in (0, \e_0]$ implies
\begin{align}\label{eq: compact bounded}
\left(\EE\left[\sup_{t\lqq T}|h(X^\e_t(x_0) ) - h(X_t(x_0))|^p\right]\right)^\frac{1}{p} 
\lqq k_1 \e^{1} \exp(k_2 T).
\end{align}
In addition, the constant $k_2$ is a polynomial in $\diam L_{x_0}$ of order $p$ with positive coefficients.  
\end{prp}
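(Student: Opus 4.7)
Since $h$ is Lipschitz in the embedding Euclidean space of $M$, the proposition reduces to proving
\[
\EE\bigl[\sup_{t\lqq T} |X^\e_t(x_0)-X_t(x_0)|^p\bigr]^{1/p} \lqq \tilde k_1 \e\exp(\tilde k_2 T).
\]
Writing $e_t := X^\e_t - X_t$ and subtracting the two integral identities (\ref{eq: SDE unperturbed integral form}) decomposes $e_t$ into a smooth drift difference $\int_0^t (F_0(X^\e_s)-F_0(X_s))\,ds$, the $Z$-stochastic integral $\int_0^t (F(X^\e_{s-})-F(X_{s-}))\,dZ_s$, the difference of Marcus correction sums along the jumps of $Z$, and the $\e$-perturbing contributions $\e K\,dt + \e\ti K\diamond d\ti Z$ (plus the own Marcus correction of the latter). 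I will close a Gronwall-type inequality on $u(t) := \EE\sup_{s\lqq t}|e_s|^p$; the $\e$-prefactor tracks directly through the perturbation, while the exponential amplification $\exp(k_2 T)$ arises from the linear Gronwall step.

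\textbf{Key step (the Marcus corrections).} A second-order Taylor expansion of the flow $Y(\cdot;x,Fz)$ gives
\[
\Phi^{Fz}(x) - x - F(x)z = \int_0^1 (1-s)\,\bigl(DF(Y(s;x,Fz))z\bigr)F(Y(s;x,Fz))\,z\,ds.
\]
Subtracting the analogous identity at $y$ and invoking Lemma~\ref{lem: invariance lemma}, statement~1), yields the pointwise bound
\[
\bigl|\bigl(\Phi^{Fz}(x)-x-F(x)z\bigr) - \bigl(\Phi^{Fz}(y)-y-F(y)z\bigr)\bigr| \lqq \tfrac{C}{2}|x-y|\|z\|^2,
\]
which is \emph{quadratic}, not exponential, in $\|z\|$. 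Rewriting the Marcus correction difference as an integral against the Poisson random measure $N(ds\,dz)$ of $Z$, compensating, and applying the $L^p$-BDG inequality for purely discontinuous martingales together with H\"older bounds this block by $C\int_0^t \EE|e_s|^p\,ds$ with constants involving only $\int\|z\|^2\nu(dz)$ and $\int\|z\|^{2p}\nu(dz)$, both finite by Hypothesis~1. The Marcus correction carried by $\e\ti K\diamond d\ti Z$ is handled identically, with an extra $\e^{2p}$ prefactor and integrals against $\nu'$.

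\textbf{Remaining terms and conclusion.} The smooth drift contributes $T^{p-1}\ell^p \int_0^t \EE|e_s|^p\,ds$ by H\"older and Lipschitzness of $F_0$; the $Z$-stochastic integral yields an analogous bound via classical $L^p$-BDG using $\int\|z\|^p\nu(dz)<\infty$ and Lipschitzness of $F$; the perturbing terms $\e\int K\,dt$ and $\e\int \ti K\diamond d\ti Z$ produce a constant of order $\e^p T^p\bigl(1+\sup_{s\lqq T}\EE|X^\e_s|^p\bigr)$ by H\"older and BDG. A standard a priori $L^p$-estimate on $X^\e$ alone (same machinery + Gronwall) yields $\sup_{s\lqq T}\EE|X^\e_s|^p \lqq C(\diam L_{x_0}+|x_0|)^p$ uniformly for $\e_0 T \lqq k_0 < 1$, while $X_s\in L_{x_0}$ directly gives $|F(X_s)|, |K(X_s)| \lqq C(1+\diam L_{x_0})$. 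Collecting all estimates produces
\[
u(t) \lqq A_0\,\e^p + k_2\int_0^t u(s)\,ds,
\]
with $A_0 = O(T^p)\cdot\text{Poly}(\diam L_{x_0})$ and $k_2$ a polynomial of order $p$ in $\diam L_{x_0}$ with positive coefficients; Gronwall and a $p$-th root, multiplied by $\mathrm{Lip}(h)$, yield (\ref{eq: compact bounded}).

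\textbf{Main obstacle.} The critical difficulty is the Marcus-correction step: a naive Gronwall via (\ref{eq: exponential increment}) would force exponential integrability of $Z$, as in \cite{HR}, excluding most heavy-tailed drivers. The whole point of Lemma~\ref{lem: invariance lemma} is to replace (\ref{eq: exponential increment}) by the quadratic estimate (\ref{eq: polynomial increment}), reducing the moment requirement to the $p$-th moment condition of Hypothesis~1. Once that replacement is in place, the rest is careful but essentially standard bookkeeping of $L^p$-BDG and Gronwall, and the smallness condition $\e_0 T \lqq k_0$ is needed only to keep the a priori bounds on $X^\e$ uniform.
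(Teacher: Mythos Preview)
There is a genuine gap in your Marcus--correction step. You claim that the $L^p$-BDG bound on the compensated Poisson integral of the Marcus difference involves the constants $\int\|z\|^2\,\nu(dz)$ and $\int\|z\|^{2p}\,\nu(dz)$, ``both finite by Hypothesis~1''. But Hypothesis~1(b) only gives $\int\|z\|^{p}\,\nu(dz)<\infty$ for the L\'evy measure $\nu$ of $Z$; the $2p$-th moment condition is assumed for $\nu'$ (the measure of the \emph{perturbing} noise $\ti Z$), not for $\nu$. Concretely, after Lemma~\ref{lem: invariance lemma} the Marcus difference is bounded by $C|e_{s-}|\,\|\Delta_s Z\|^2$, so the compensated integral has integrand $g(s,z)=C|e_{s-}|\,\|z\|^2$; Kunita's $L^p$ maximal inequality then produces the term $\int_0^T\!\int \EE[|g|^p]\,\nu(dz)\,ds = C^p\int\|z\|^{2p}\nu(dz)\int_0^T\EE|e_s|^p\,ds$, and the $\|z\|^{2p}$ integral is not controlled. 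Your scheme of first bounding $e_t$, then raising to the $p$-th power, then applying $L^p$-BDG therefore forces exactly the stronger moment hypothesis the paper is trying to avoid.

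The paper's route is structurally different and sidesteps this. It passes to foliation coordinates $(u^\e,v^\e)=\phi(X^\e)$, which (i) gives the free a~priori bound $|u^\e_s-u_s|\lqq \diam L_{x_0}$ from compactness of the leaf, and (ii) separates a purely $\e$-driven transversal component $v^\e$ (since $v\equiv 0$). Crucially, instead of bounding the difference and then taking the $p$-th power, the paper applies the change-of-variable formula \emph{directly} to $|v^\e_t-v_t|^p$ and $|u^\e_t-u_t|^p$. The Marcus term then appears as $I_3\lqq C\sum_{s\lqq t}|u^\e_{s-}-u_{s-}|^{p}\|\Delta_s Z\|^2$, and since one now needs only $\EE[\sup_{[0,T]}|I_3|]$ (an $L^1$ quantity), the $L^1$ maximal inequality of Saint~Loubert~Bi\'e suffices, requiring merely $\int\|z\|^2\,\nu(dz)<\infty$. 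This is precisely the mechanism by which the proposition holds under only $p$-th moments of $Z$; your outline misses it.
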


\begin{cors}\label{cor: preliminary}
Let the assumptions of Proposition \ref{lem: preliminary} be satisfied for some $p\gqq 2$. 
Then for any $\la \in (0,1)$ given there exist positive constants 
$c_\la$, $\e_0$, $k_3$ such that $T_\e := -c_\la \ln(\e)$, $\e\in (0, \e_0]$ satisfies  
\begin{equation}\label{eq: cor preliminary}
\left(\EE\left[\sup_{t\lqq T_\e}|h(X^\e_t(x_0) ) - h(X_t(x_0))|^p\right]\right)^{\frac{1}{p}} 
\lqq k_3 \e^\la. 
\end{equation}
In addition, the constant $k_3$ is a polynomial in $\diam L_{x_0}$ of order $p$ with positive coefficients. 
\end{cors}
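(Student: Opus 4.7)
The corollary is essentially a calibration of Proposition~\ref{lem: preliminary} at the logarithmic time horizon $T_\e := -c_\lambda \ln \e$. The plan is to substitute $T = T_\e$ directly into the bound (\ref{eq: compact bounded}) and to tune the slope $c_\lambda$ so that the resulting power of $\e$ is at least~$\lambda$.

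Concretely I would proceed in three short steps. First, inserting $T = T_\e$ on the right-hand side of (\ref{eq: compact bounded}) gives
\[
k_1\,\e\,\exp(k_2 T_\e) \;=\; k_1\,\e\,\exp\bigl(-k_2 c_\lambda \ln \e\bigr) \;=\; k_1\,\e^{\,1 - k_2 c_\lambda}.
\]
Second, to guarantee $k_1\,\e^{\,1 - k_2 c_\lambda}\lqq k_3\,\e^\lambda$ it is enough to ensure $1 - k_2 c_\lambda \gqq \lambda$, and the sharpest admissible choice is the explicit value
\[
c_\lambda \;:=\; \frac{1-\lambda}{k_2}.
\]
Because $\lambda\in(0,1)$ is fixed and $k_2>0$ is the constant supplied by Proposition~\ref{lem: preliminary} (polynomial in $\diam L_{x_0}$), this $c_\lambda$ is a well-defined, strictly positive constant that inherits an inverse-polynomial dependence on $\diam L_{x_0}$ from~$k_2$.

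Third, one must verify that the admissibility hypothesis of Proposition~\ref{lem: preliminary} (the smallness constraint tying $\e$ and $T$ to $k_0$) remains valid at $T = T_\e$. Since the horizon grows only logarithmically in $\e$ while $\e$ itself tends to zero polynomially, the relevant product $\e T_\e = -c_\lambda\,\e \ln \e$ tends to $0$ as $\e\searrow 0$. It therefore suffices to shrink $\e_0$ (as a function of $\lambda$, $k_0$ and $k_2$) so that the smallness constraint holds uniformly for all $\e\in(0,\e_0]$. Setting $k_3 := k_1$ then yields (\ref{eq: cor preliminary}), with the claimed polynomial structure of $k_3$ in $\diam L_{x_0}$ inherited from the constants of Proposition~\ref{lem: preliminary}.

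No genuine obstacle is expected: the argument is a routine logarithmic time-substitution. The only care needed is the bookkeeping of how $c_\lambda$, the shrunken $\e_0$, and $k_3$ depend jointly on $\lambda$ and on the polynomial constants coming from the preceding proposition.
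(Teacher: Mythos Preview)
Your proposal is correct and follows essentially the same route as the paper: substitute $T_\e=-c\ln\e$ into (\ref{eq: compact bounded}) to obtain $k_1\e^{1-ck_2}$ and then choose $c$ so that the exponent is at least $\la$. The only cosmetic difference is that the paper picks $c=\frac{1}{k_2}(1-\la')$ with $\la'=\frac{1}{2}(\la+1)$, yielding the slightly stronger exponent $\la'>\la$, whereas you take the boundary value $c_\la=\frac{1-\la}{k_2}$; your added third step checking the smallness constraint $\e T_\e\to 0$ is a point the paper leaves implicit.
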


\begin{proof}
Plugging $T_\e = - c\ln(\e)$ into the right-hand 
side of (\ref{eq: compact bounded}) we obtain $k_1 \e \exp(k_2 T_\e) = k_1 \e^{1-c k_2}.$
Given $\la \in (0,1)$ we choose $c = \frac{1}{k_2} \big(1 - \la'\big)$ and $\la' = \frac{1}{2}\big(\la +1\big)$ to infer the desired result.
\end{proof}

\bigskip
\begin{proof} (of Proposition \ref{lem: preliminary}) The proof consists 
in three parts. After changing the coordinates in part 1 we estimate 
the transversal component $|v^\e-v|$ using Lemma \ref{lem: invariance lemma} in part 2. 
In part 3 we estimate the horizontal component $|u^\e -u|$ before concluding 
with a nonlinear comparison principle. Part 2 and part 3 are given in separate lemmas. 
The main tools to derive two (nonlinear) comparison principles 
are Lemma  \ref{lem: invariance lemma} and Kunita's maximal inequality for the $L^p$ norm ($p \gqq 2$) 
of the supremum of compensated Poisson integrals found in \cite{Ku04} and 
an extension of this result for $p\in [1,2]$ by Saint Loubert Bi\'e \cite{SLB98}. 

\paragraph{I. Change of coordinates: }
First we rewrite the respective solutions of equation (\ref{eq: SDE}) and (\ref{eq: SDE perturbed}), 
$X$ and $X^\e$, in terms of the coordinates given by the diffeomorphism $\phi$ 
\begin{align*}
(u_t, v_t) := \phi(X_t) \qquad &\mbox{ and }\qquad (u^\e_t, v^\e_t) := 
\phi(X^\e_t), \qquad \e\in (0,1), t\in [0, T].
\end{align*}
The Lipschitz regularities of $h$ and $\phi$ yield a joint Lipschitz constant 
$C_0 := Lip(h \circ \phi^{-1})$ such that 
\begin{align}\label{eq: ungleichung 1}
|h(X^\e_t)- h(X_t)| &= |h \circ \phi^{-1}(u^\e_t, v^\e_t) -h \circ \phi^{-1}(u_t, v_t) | \nonumber \\
&\lqq C_0 |(u^\e_t-u_t, v^\e_t- v_t)|\lqq C_0 (|u^\e_t-u_t|+| v^\e_t- v_t|).
\end{align}
The proof of the statement consists in calculating estimates for each summand on the right hand 
side of equation above. We define 
\begin{align*}
\ffF := (D\phi) \circ F\circ \phi^{-1},\qquad 
\fK := (D\phi) \circ K \circ \phi^{-1}, \qquad 
\ti \fK := (D\phi) \circ \ti K \circ \phi^{-1}, 
\end{align*}
whose derivatives are uniformly bounded. 
Considering the components in the image of $\phi$ we have: 
\[
\fK = (\fK_H, \fK_V), \qquad \ti \fK = (\ti \fK_H, \ti \fK_V)
\]
with $ \fK_H, \ti \fK_H \in TL_{x_0}$ with $\fK_H \perp \fK_V$ and 
$\fK_V, \ti \fK_V \in TV \simeq \RR^d$ with $\ti \fK_H \perp \ti \fK_V$. 
The chain rule for canonical Marcus equations (Theorem 4.2  of \cite{KPP95}) 
yields for equation (\ref{eq: SDE perturbed}) the following form of the components 
in $\phi$ coordinates  
\begin{align}
d u_t^{\e} &=  \ffF_0(u_t^{\e}, v_t^{\e}) dt +  \ffF(u_t^\e, v_t^\e) \diamond d Z_t 
+ \e \fK_H(u_t^\e, v_t^\e) dt + \e \ti \fK_H(v_t^\e) \diamond d \ti Z_t&\mbox{ with } 
u_t^{\e}\in L_{x_0}, 
\label{eq: foliations SDE in koordinaten}\\
d v_t^{\e} &= \e \fK_V(u_t^\e, v_t^\e) dt + \e \ti \fK_V(v_t^\e) \diamond d \ti Z_t&\mbox{ with } v_t^{\e}\in V.
\label{eq: transversale SDE in koordinaten}
\end{align}
Note that for $\e =0$ the equation yields $v_t = v^0_t = 0 \in V$ almost surely. 
However we will write $v_t$ nevertheless for the sake of readability.  

\begin{lems}[II. Estimate of the transversal deviation $|v^\e_\cdot-v_\cdot|$]\label{lem: vertical}
Under the previous assumptions we obtain the following. 
There is a constant $\bar C_1>0$ such that for $\e_0 T< 1$ 
$\e\in (0, \e_0]$ implies 
\begin{align}
\EE[\sup_{t\in [0, T]} |v^\e_t - v_t|^p] 
&\lqq \bar C_{1} \e^p (1+T^{2p+1}).\label{eq: vertical lem}
\end{align}
In addition $\bar C_1 = \bar C_1(\diam L_{x_0})$ depends globally Lipschitz continuously on $\diam L_{x_0}$. 
\end{lems}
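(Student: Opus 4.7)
The plan is to expand $v^\e_t$ via the L\'evy--It\^o--Marcus form (\ref{eq: SDE unperturbed integral form}) applied to equation (\ref{eq: transversale SDE in koordinaten}), estimate each of the three resulting pieces in $L^p$, and conclude with a Gronwall argument. Since the unperturbed equation has no transversal motion, $v_t\equiv v_0$ (treated as $0$), so the task reduces to bounding $\EE[\sup_{t\in[0,T]}|v^\e_t|^p]$. The decomposition produces the deterministic drift $A^\e_t = \e\int_0^t \fK_V(u^\e_s, v^\e_s)\,ds$, the jump integral $B^\e_t = \e\int_0^t \ti\fK_V(v^\e_{s-})\,d\ti Z_s$ (understood in the usual L\'evy--It\^o sense with a compensated small-jump part and a large-jump part), and the Marcus correction sum $C^\e_t = \sum_{0<s\lqq t}\bigl(\Phi^{\e\ti\fK_V \Delta_s\ti Z}(v^\e_{s-}) - v^\e_{s-} - \e\ti\fK_V(v^\e_{s-})\Delta_s\ti Z\bigr)$.

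The drift $A^\e_t$ is handled by Jensen's inequality, the Lipschitz continuity of $\fK_V$, and the compactness of $L_{x_0}$, which bounds $|u^\e_s|$ by a constant Lipschitz in $\diam L_{x_0}$; this yields a bound of the form $C\e^p T^{p-1}\int_0^t(1+\EE|v^\e_s|^p)\,ds$. The jump integral $B^\e_t$ is estimated by Kunita's maximal inequality (\cite{Ku04}, together with \cite{SLB98} for $p\in[1,2]$), combined with the Lipschitz continuity of $\ti\fK_V$; the assumption $\int\|z\|^{2p}\,\nu'(dz)<\infty$ makes $\int\|z\|^q\,\nu'(dz)<\infty$ for every $q\in[2,2p]$, yielding a bound of order $C\e^p(T^{p/2}+T)\bigl(1+\int_0^T\EE|v^\e_s|^p\,ds\bigr)$.

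The main obstacle is the Marcus correction $C^\e_t$: a naive Lipschitz estimate would make each summand $\Phi^{\e\ti\fK_V z}(v)-v-\e\ti\fK_V(v)z$ grow exponentially in $\|z\|$ via (\ref{eq: exponential increment}), forcing exponential moments on $\ti Z$ as in \cite{HR}. The resolution is exactly the content of Lemma \ref{lem: invariance lemma}: integrating the ODE (\ref{eq: increment ode}) twice writes each summand as $\e^2\int_0^1(1-\si)\,(D\ti\fK_V)(Y(\si))\,\ti\fK_V(Y(\si))\,z\cdot z\,d\si$, and statement~2 of that lemma bounds $(D\ti\fK_V)\ti\fK_V$ uniformly along the integral curves. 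Hence each jump contributes at most $C\e^2\|\Delta_s\ti Z\|^2$, so $|C^\e_t|\lqq C\e^2\int_0^t\!\int\|z\|^2\,N'(ds\,dz)$; the $L^p$-norm of this nondecreasing Poisson integral is controlled by compensation and Kunita's inequality applied with the integrand $\|z\|^2$, and $\int\|z\|^{2p}\,\nu'(dz)<\infty$ is precisely what makes this finite. The resulting contribution is of order $\e^{2p}(1+T^p)$ and is absorbed into the other two pieces.

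Collecting the three bounds yields an integral inequality of the form $\EE[\sup_{r\lqq t}|v^\e_r|^p]\lqq C\e^p(1+T^{2p+1}) + C\e^p T^{p-1}\int_0^t\EE[\sup_{r\lqq s}|v^\e_r|^p]\,ds$, and Gronwall's lemma turns this into $C\e^p(1+T^{2p+1})\exp(C\e^p T^p)$. The constraint $\e_0 T<1$ keeps $\e^p T^p<1$, so the exponential factor is bounded by a universal constant, producing (\ref{eq: vertical lem}). Finally, tracking the bound $|u^\e_s|\lqq C(\diam L_{x_0})$ through the Lipschitz constant of $\fK_V$ shows that $\bar C_1$ depends globally Lipschitz-continuously on $\diam L_{x_0}$ as claimed.
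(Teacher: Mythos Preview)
Your route is genuinely different from the paper's and, modulo one gap, it works. The paper does not decompose $v^\e_t$ into $A^\e+B^\e+C^\e$ and run Gronwall on $\EE[\sup|v^\e|^p]$ directly. Instead it applies the change of variables formula to $|v^\e_t-v_t|^p$ itself, producing six terms $H_1,\dots,H_6$ each carrying a factor $|v^\e_{s-}-v_{s-}|^{p-1}$ or $|v^\e_{s-}-v_{s-}|^p$; taking expectation kills the compensated Poisson integrals and leaves a \emph{nonlinear} integral inequality for the marginal $\psi(t)=\sup_{s\lqq t}\EE[|v^\e_s-v_s|^p]$, namely $\psi(t)\lqq 2C\e\int_0^t\psi(s)^{(p-1)/p}ds$, whose explicit solution is $(C\e t/p)^p$. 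Only afterwards does the paper return to the pathwise representation, bound the martingale pieces via Kunita and Saint~Loubert~Bi\'e using this marginal estimate, and assemble the sup bound $\bar C_1\e^p(1+T^{2p+1})$. Your one-step Gronwall is conceptually simpler and avoids the nonlinear comparison; the paper's two-step approach, in exchange, yields the precise $T$-power and makes the affine dependence of $\bar C_1$ on $\diam L_{x_0}$ (entering only through $C_2=\sup_{y\in L_{x_0}}|\fK_V(y,0)|$ and $C_3=p\ell(1+\diam L_{x_0})$) easier to trace.

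The gap is in your treatment of the Marcus correction $C^\e_t$. Statement~2) of Lemma~\ref{lem: invariance lemma} asserts only that $\sup_{\theta\in[0,1]}\|(D\ti\fK_V)(Y(\theta;x))\ti\fK_V(Y(\theta;x))\|<\infty$ for \emph{each fixed} $x$; it does \emph{not} give a bound uniform in $x$. Since $(D\ti\fK_V)\ti\fK_V$ is only assumed globally Lipschitz, not bounded, the correct estimate is $|\Phi^{\e\ti\fK_V z}(v^\e_{s-})-v^\e_{s-}-\e\ti\fK_V(v^\e_{s-})z|\lqq C\e^2(1+|v^\e_{s-}|)\|z\|^2$, not $C\e^2\|z\|^2$. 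The paper handles this by splitting the analogous term into $H_5$ (the difference $\Phi(v^\e_{s-})-\Phi(v_{s-})-\cdots$, bounded via statement~1) with a factor $|v^\e_{s-}-v_{s-}|$) and $H_6$ (the remainder at the fixed point $v_{s-}=0$, where statement~2) applies). If you insert the $(1+|v^\e_{s-}|)$ factor, the contribution of $C^\e$ to your Gronwall inequality becomes $C\e^{2p}(1+T^{p-1})\int_0^t(1+\EE[\sup_{r\lqq s}|v^\e_r|^p])\,ds$, which is absorbed exactly as you intend, and the rest of your argument goes through.
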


\begin{proof} 
By assumption $F$, $K$ and $\ti K$ are globally Lipschitz continuous. 
Without loss of generality, we can assume they have all a common Lipschitz constant $\ell$. 
The compactness of $L_{x_0}$ and the embedding in $\RR^{n+d}$ yields the existence of $C_1 = \diam L_{x_0}$ such 
that $|u^\e_s(x_0)-u_s(x_0)|< C_1$.  
We use the notation 
\begin{equation}\label{eq: k1}
C_2(x_0) = \sup_{y\in L_{x_0}} \|\fK_V(y, 0)\|\lqq \|\fK(x_0)\| + \sup_{y\in L_{x_0}} \ell |x_0-y|\lqq \|\fK(x_0)\| +\ell\,\diam L_{x_0}< \infty,
\end{equation}
which is finite by the compactness of $L_{x_0}$ 
and the continuity of $\fK_V$. Keeping in mind that $\lgl D g(x), u\rangle = p |x|^{p-2}\langle x, u\rgl$ for $x \mapsto g(x) := |x|^p, x\in \RR^{n+d}$ 
we apply the change of variable formula and obtain 
\begin{align}
|v_t^\e - v_t|^p &= p \int_0^t |v^\e_s - v_s|^{p-2} |\lgl v^\e_s - v_s, \e\fK_V(u^\e_s, v^\e_s) \rgl| ds \nonumber\\
&\qquad + p  \int_0^t |v_{s-}^\e - v_{s-}|^{p-2} |\lgl v_{s-}^\e - v_{s-}, \e\ti \fK_V(v^\e_{s-}) \diamond d \ti Z_s\rgl| \nonumber \\
&\lqq p  \int_0^t |v^\e_s - v_s|^{p-1} |\e\fK_V(u^\e_s, v^\e_s) -\e\fK_V(u_s, v_s)| ds \tag{$H_1$}\\
&\quad + p \int_0^t |v^\e_s - v_s|^{p-1} |\e\fK_V(u_s, v_s)|ds \tag{$H_2$}\\
&\quad + p \int_0^t |v_{s-}^\e - v_{s-}|^{p-2} |\lgl v_{s-}^\e - v_{s-}, \e(\ti \fK_V(v^\e_{s-})-\ti \fK_V(v_{s-})) d\ti Z_s\rgl| \tag{$H_3$} \\
&\quad + p \int_0^t |v_{s-}^\e - v_{s-}|^{p-2} |\lgl v_{s-}^\e - v_{s-}, \e\ti \fK_V(v_{s-}) d\ti Z_s\rgl| \tag{$H_4$} \\
&\quad + p \sum_{0< s \lqq t} |v_{s-}^\e - v_{s-}|^{p-1} |\Phi^{\e \ti \fK_V \Delta_s \ti Z}(v^\e_{s-})-\Phi^{\e \ti \fK_V \Delta_s \ti Z}(v_{s-})\nonumber\\
&\qquad \qquad - (v^\e_{s-}-v_{s-}) - \e (\ti \fK_V(v^\e_{s-})- \ti \fK_V(v_{s-}))\Delta_s \ti Z| \tag{$H_5$}\\[2mm]
&\quad + p \sum_{0< s \lqq t} |v_{s-}^\e - v_{s-}|^{p-1} |\Phi^{\e \ti \fK_V \Delta_s \ti Z}(v_{s-})- v_{s-} - \e \ti \fK_V(v_{s-}) \Delta_s \ti Z| \tag{$H_6$}\\
&= H_1 + H_2 + H_3 + H_4 + H_5+H_6.
\end{align}
\paragraph{1. Pathwise representation and estimates: } 
$\mathbf{H_1}$: The compactness of $L_{x_0}$ yields for $C_3 = p \ell (1 + \diam L_{x_0})$  
\begin{align}
H_1 &\lqq \e p\ell \int_0^t |v^\e_s - v_s|^{p-1} \big(|u^\e_s-u_s| + |v^\e_s-v_s|\big) ds 
\lqq \e C_3 \int_0^t |v^\e_s - v_s|^{p} ds+ \e C_3 \int_0^t |v^\e_s - v_s|^{p-1} ds. \label{H1}
\end{align}
$\mathbf{H_2}$: A direct computation gives 
\begin{align}
H_2  &\lqq \e p C_2 \int_0^t |v^\e_s - v_s|^{p-1} ds\lqq \e C_4 \int_0^t |v^\e_s - v_s|^{p-1} ds. \label{H2}
\end{align}
$\mathbf{H_3}$: Switching to the Poisson random measure representation 
with respect to the compensated $\ti N'$, for instance in Kunita \cite{Ku04}, 
we obtain for $C_{5} = p \ti \ell \int_{\|z\|>1} \|z\| \nu'(dz)$
\begin{align}
H_3 
&= p\int_0^t \int_{\RR^r}|v_{s-}^\e - v_{s-}|^{p-2} \lgl v_{s-}^\e - v_{s-}, \e(\ti \fK_V(v^\e_{s-})-\ti \fK_V(v_{s-})) z \rgl \ti N'(ds dz)\nonumber\\
&\quad + p\int_0^t \int_{\|z\|>1} |v^\e_s - v_s|^{p-2} \lgl v^\e_s - v_s, \e(\ti \fK_V(v^\e_s)-\ti \fK_V(v_s)) z\rgl \nu'(dz) ds\nonumber\\
&\lqq \e p\int_0^t \int_{\RR^r}|v_{s-}^\e - v_{s-}|^{p-2} |\lgl v_{s-}^\e - v_{s-}, 
(\ti \fK_V(v^\e_{s-})-\ti \fK_V(v_{s-})) z \rgl| \ti N'(ds dz)\nonumber\\
&\quad + \e C_5 \int_0^t |v^\e_s - v_s|^{p}ds.\label{H3}
\end{align}
$\mathbf{H_4:}$ With the help of H\"older's inequality we obtain for $C_6 = p\ti \ell \int_{\|z\|>1} \|z\| \nu'(dz)$
\begin{align}
H_4 
&= p\int_0^t \int_{\RR^r}|v_{s-}^\e - v_{s-}|^{p-2} \lgl v_{s-}^\e - v_{s-}, \e\ti \fK_V(v_{s-}) z\rgl \ti N'(ds dz)\nonumber\\
&\quad + p\int_0^t \int_{\|z\|>1}|v^\e_s - v_s|^{p-2} \lgl v^\e_s - v_s, \e\ti \fK_V(v_s) z\rgl \nu'(dz) ds\nonumber\\
&\lqq \e p\int_0^t \int_{\RR^r}|v_{s-}^\e - v_{s-}|^{p-2} |\lgl v_{s-}^\e - v_{s-}, \ti \fK_V(v_{s-}) z\rgl| \ti N'(ds dz)\nonumber\\
&\quad + \e C_6 \int_0^t |v^\e_s - v_s|^{p-1} ds.\label{H4}
\end{align}
$\mathbf{H_5:}$ For the canonical Marcus terms, Lemma \ref{lem: invariance lemma}, statement 1), provides 
a positive constant $C_7$ which depends on the leaf of the initial condition such that 
independent of $\theta \in [0, 1]$
\begin{equation}\label{eq: second moments}
\Big|\Big[(D\ti \fK_V(\Phi^{\ti \fK_V z}(y,\theta)) z) \ti \fK_V\big(\Phi^{\ti \fK_V z}(y,\theta)\big)z\Big] 
- \Big[(D\ti \fK_V\big(\Phi^{\ti \fK_V z}(x,\theta)\big)z) \ti \fK_V\big(\Phi^{\ti \fK_V z}(x,\theta)\big)z\Big]\Big| 
\leqslant C_7 |x-y|\|z\|^2.
\end{equation}
The Poisson random measure representation 
of the random sum reads as the following estimate 
in terms of the quadratic variation of $Z$ for $C_8 = C_7 p /2$ 
\begin{align*}
H_5 
&\lqq \frac{\e^2 p}{2} \sum_{0< s\lqq t}  |v_{s-}^\e- v_{s-}|^{p-1} \Big|(D\ti \fK_V (\Phi^{\e \ti \fK_V \Delta_s \ti Z}(v^\e_{s-})) \ti \Delta_s Z)
\ti \fK_V \big(\Phi^{\e \ti \fK_V \Delta_s \ti Z}(v^\e_{s-})\big)\Delta_s \ti Z \nonumber\\
&\qquad - (D \ti \fK_V \big(\Phi^{\e \ti \fK_V \Delta_s \ti Z}(v_s)\big)\Delta_s \ti Z) 
\ti \fK_V\big(\Phi^{\e \ti \fK_V \Delta_s \ti Z}(v_s)\big)\Delta_s \ti Z \Big|\nonumber\\[2mm] 
&\lqq \e^2 C_8 \sum_{0< s\lqq t}  |v_{s-}^\e- v_{s-}|^{p} \|\Delta_s \ti Z\|^2.
\end{align*}
The representation of this sum 
in terms of the Poisson random measure, for instance in Kunita \cite{Ku04}, 
is given for $C_9 = \int_{\|z\|>1}\|z\|^2 \nu'(dz)$ by 
\begin{align}
\sum_{0< s\lqq t} |v^\e_{s-}-v_{s-}|^p \|\Delta_s \ti Z\|^2 
&= \int_0^t \int_{\RR^r} |v^\e_{s-}-v_{s-}|^p \|z\|^2 \ti N'(dsdz) 
+C_9 \int_0^t |v^\e_s-v_s|^p ~ds,\nonumber
\end{align}
which yields 
\begin{align}
H_5 &\lqq \e^2 C_{8} \int_0^t \int_{\RR^r} |v^\e_{s-}-v_{s-}|^p \|z\|^2 \ti N'(dsdz) 
+\e^2 C_{10}  \int_0^t |v^\e_s-v_s|^p ~ds.
\label{H5}
\end{align}
$\mathbf{H_6: }$ For the last term we use Lemma \ref{lem: invariance lemma}, statement~2), which yields a positive constant $C_{11}$ 
such that for any $z\in \RR^r$ 
\[
\sup_{\theta \in [0,1]} \|(D \ti \fK_V(Y(\theta, 0, \e \ti \fK_V z))z \ti \fK_V(Y(\theta ,0, \e \ti \fK_V z))z\| < C_{11} \|z\|^2<\infty.  
\]
Hence exploiting that $\int_{\|z\|>1} \|z\|^4 \nu'(dz) <\infty$ we have for $C_{12} = C_{11} (p/2) \int_{\|z\|>1} \|z\|^4 \nu'(dz)$
\begin{align}
H_6 &\lqq  p\sum_{0< s \lqq t} |v_{s-}^\e - v_{s-}|^{p-1} 
|\Phi^{\e \ti \fK_V \Delta_s \ti Z}(v_{s-})- v_{s-} - \e \ti \fK_V(v_{s-}) \Delta_s \ti Z|\nonumber\\
&\lqq \frac{\e^2 p}{2}\sup_{\substack{0\lqq s\lqq t\\ \theta\in [0,1]}} 
\|(D \ti \fK_V (\xi) \Delta_s \ti Z) \ti \fK_V (\xi) \Delta_s \ti Z )\|\bigg|_{\xi = Y(\theta,v_s, \e \ti \fK_V \Delta_s \ti Z)}  
\sum_{0< s \lqq t} |v_{s-}^\e - v_{s-}|^{p-1} \|\Delta_s \ti Z\|^2 \nonumber\\
&\lqq \e^2 C_{12} \sum_{0< s \lqq t} |v_{s-}^\e - v_{s-}|^{p-1} \|\Delta_s \ti Z\|^4\nonumber\\
&= \e^2 C_{12} \int_0^t \int_{\RR^r} |v_{s-}^\e - v_{s-}|^{p-1} \|z\|^4 \ti N'(dsdz)
 + \e^2 C_{13} \int_0^t  |v^\e_s - v_s|^{p-1}  ds.\label{H6}
\end{align}
Combining the estimates (\ref{H1}, \ref{H2}, \ref{H3}, \ref{H4}, \ref{H5}, \ref{H6}) we obtain 
\begin{align}
|v_t^\e - v_t|^p
&\lqq \e C_3 \int_0^t |v^\e_s - v_s|^{p}ds  + \e C_3 \int_0^t |v^\e_s - v_s|^{p-1}ds 
+ \e C_4 \int_0^t |v^\e_s - v_s|^{p-1} ds\label{eq: Ito v}\\
&\quad + \e C_5 \int_0^t |v^\e_s - v_s|^{p}ds 
+\e C_6 \int_0^t |v^\e_s - v_s|^{p-1} ds\nonumber\\
&\quad + \e^2 C_{10} \int_0^t |v^\e_s-v_s|^p  ~ds + \e^2 C_{13} \int_0^t  |v^\e_s - v_s|^{p-1}  ds\nonumber\\
&\quad + \e p \int_0^t \int_{\RR^r}|v_{s-}^\e - v_{s-}|^{p-2} |\lgl v_{s-}^\e - v_{s-}, 
(\ti \fK_V(v^\e_{s-})-\ti \fK_V(v_{s-})) z \rgl| \ti N^\prime(ds dz)\label{RPMI 3}\\
&\quad +\e p \int_0^t \int_{\RR^r}|v_{s-}^\e - v_{s-}|^{p-2} |\lgl v_{s-}^\e - v_{s-}, \ti \fK_V(v_{s-}) z\rgl| \ti N^\prime(ds dz)\label{RPMI 2}\\
&\quad + \e^2 C_8 \int_0^t \int_{\RR^r} |v^\e_{s-}-v_{s-}|^p \|z\|^2 \ti N^\prime(dsdz) \label{RPMI 4}\\
&\quad + \e^2 C_{12} \int_0^t \int_{\RR^r} |v_{s-}^\e - v_{s-}|^{p-1} \|z\|^4 \ti N'(dsdz).\label{RPMI 1}
\end{align}
\paragraph{2. Estimate of the marginal expectation: } 
Taking the expectation we obtain a constant $C_{14}$ such that 
\begin{align*}
\EE[|v^\e_t - v_t|^p] &\lqq \e C_{14} \int_0^t \big(\EE[|v^\e_s - v_s|^p] +  \EE[|v^\e_s - v_s|^{p-1}]\big) ds 
\end{align*}
\vspace{-3mm}
and
\begin{align*}
\sup_{s\in [0, t]} \EE[|v^\e_s - v_s|^p]
&\lqq \e C_{14} \int_0^t \big(\EE[|v^\e_s - v_s|^p] +  \EE[|v^\e_s - v_s|^{p}]^\frac{p-1}{p} \big) ds\\
&\lqq \e C_{14} t \sup_{s\in [0, t]} \EE[|v^\e_s - v_s|^p] +  C_{14} \e \int_0^t \EE[|v^\e_s - v_s|^{p}]^\frac{p-1}{p} ds.
\end{align*}
For $\e \in (0, \e_0]$ and $t\in [0, T]$, $T>0$ such that $\e_0 T C_{14} \lqq \frac{1}{2}$, 
that is fixing $k_0 = (2 C_{14})^{-1}$ in the statement, we have 
\begin{align*}
\sup_{s\in [0, T]} \EE[|v^\e_s - v_s|^p] &\lqq 2 C_{14} \e \int_0^T \sup_{t\in [0, s]} \EE[|v^\e_t - v_t|^{p}]^\frac{p-1}{p}ds.  
\end{align*}
It is easy to verify that 
the maximal solution of this equation is given by 
\begin{equation}
\sup_{s\in [0, T]} \EE[|v^\e_s - v_s|^p] \lqq \Big(\frac{2 C_{14}}{p} \e T\Big)^p\lqq C_{15} (\e T)^p. \label{eq: p}
\end{equation}
We can replace the exponent $p$ in the estimate of $\EE[|v_s^\e - v_s|^p]$ by $2p$ and $p-1$. 
This is possible since the integrals with respect to $\nu'$ do not depend on $p$ and obtain for $q \in \{2p, p-1\}$
\begin{align}
&\sup_{s\in [0, T]}\EE[|v^\e_s - v_s|^{q}] \lqq 
C_{16}(\e T)^{q}. \label{eq: 2p}
\end{align}

\paragraph{3. Estimate of the expectation of the supremum: } 
We go back to (\ref{eq: Ito v}) and note that all integrands 
with respect to the Lebesgue integral $ds$ are positive, 
such that the integrals are positive and increasing.  
Further we note that the last four summands are compensated Poisson random integrals $M = M^1 + M^2$. 
The sum of the terms (\ref{RPMI 3}, \ref{RPMI 2}, \ref{RPMI 4}) 
will be denoted for convenience by $M^1$ and term (\ref{RPMI 1}) will be denoted by $M^2$. 
We obtain with the help of Jensen's inequality 
\begin{align}
\EE[\sup_{[0, T]} |v^\e - v|^p] 
&\lqq \e  C_{17} \int_0^T \big(\EE[|v^\e_s - v_s|^p] +  \EE[|v^\e_s - v_s|^{p}]^\frac{p-1}{p}\big) ds + 
\EE[\sup_{[0, T]} |M^1|^2]^\frac{1}{2} + \EE[\sup_{[0, T]} |M^2|].\label{eq: final sup}
\end{align}
For $M^1$ we use Kunita's maximal inequality for the exponent $2$ (see \cite{Ku04} or \cite{Ap09}) and Young's inequality 
for the exponents $\frac{2p-2}{2p}$ and $2p$ in order to obtain 
\begin{align}
\EE[\sup_{[0, T]} |M^1|^2] 
&\lqq \e  C_{18} \Big(\|\ti \fK_V(0)\|\int_0^T \int_{\RR^r}\EE[|v^\e_s - v_s|^{2(p-1)}]  \|z\|^2 \nu'(dz) ds\nonumber\\
&\quad + \int_0^T \int_{\RR^r}\EE[|v^\e_s - v_s|^{2p}] \|z\|^2 \nu'(dz) ds\nonumber\\
&\quad + \int_0^T \int_{\RR^r} \EE[|v^\e_s - v_s|^{2 p}] \|z\|^4 \nu'(dz)ds\Big)\nonumber\\
&\lqq \e  C_{19} \int_0^T \Big(\EE\Big[|v^\e_s - v_s|^{2 (p-1)}\Big] + \EE\Big[|v^\e_s - v_s|^{2p}\Big]\Big) ds \nonumber\\
&\lqq C_{19} \int_0^T \Big((1+\e) \EE\Big[|v^\e_s - v_s|^{2p}\Big] + \e^{2p}\Big) ds, \label{eq: mart square}
\end{align}
and inserting (\ref{eq: 2p}) in (\ref{eq: mart square}) we get 
\begin{align}\label{eq: M12}
\EE[\sup_{[0, T]} |M^1|^2] 
&\lqq C_{20} \int_0^T (\e s)^{2 p} ds + \e^{2p} T \lqq C_{20} \e^{2p} T(1+T^{2p}).
\end{align}
This could be also repeated with term $M^2$ but the price to pay would be $\int \|z\|^8 \nu'(dz)<\infty$, 
equivalent to the finiteness eighth moments of $\ti Z$. 
Instead for $M^2$ we use a maximal inequality for integrals with respect to 
Poisson random measures for the exponent $1$ given in \cite{PZ08}, Lemma 8.22 and resp. Theorem 8.23, 
going back to Saint Loubert Bi\'e \cite{SLB98}, which states the existence of a constant 
$C_{21}$ such that for $C_{22} = C_{21} \int_{\RR^r} \|z\|^4 \nu'(dz)$ 
and $t\gqq 0$ 
\begin{align*}
\EE[\sup_{[0, T]} |M^2|] &\lqq 
\e^2 C_{22} \int_0^T \int_{\RR^r}  \EE\Big[|v^\e_s - v_s|^{p-1}\Big] \|z\|^4 \nu'(dz) ds 
\lqq \e^2 C_{22} \int_0^T  \EE\Big[|v^\e_s - v_s|^{p}\Big]^\frac{p-1}{p} ds.
\end{align*}
Inserting (\ref{eq: p}) in the preceding expression we get 
\begin{align}\label{eq: M2}
\EE[\sup_{[0, T]} |M^2|] &\lqq 
\e^2 C_{23} \int_0^T (\e s)^{p-1} ds \lqq C_{23} \e^{p+1} T^p.  
\end{align}
Transforming (\ref{eq: final sup}) with the help of (\ref{eq: p}), (\ref{eq: M12}) and (\ref{eq: M2}) 
and keeping in mind that $\e T <1$ yields 
\begin{align}
\EE[\sup_{[0, T]} |v^\e - v|^p] 
&\lqq \e C_{24} \int_0^T \big((\e s)^p  + (\e s)^{p-1}\big) ds + C_{25} \e^{p} \sqrt{T(1+T^{2p})} 
+ C_{23} \e^{p+1} T^p\nonumber\\
&\lqq C_{26} \e^p (1+T^{2p+1}).\label{eq: vertical}
\end{align}
\end{proof}

\begin{lems}[III. Estimate of the horizontal component $|u^\e_\cdot-u_\cdot|$]\label{lem: horizontal}
Under the previous assumptions we obtain the following. 
There are constants $\bar C_2, t_0>0$ such that for $\e_0 T< 1$ 
$\e\in (0, \e_0]$ implies 
\begin{align}
\EE\left[\sup_{t\in [0, T]}|u_t^{\e} - u_t|^p\right] 
&\lqq \bar C_{2} \e^{2p} (1+(T\vee t_0))^{2(p+1)} \exp\Big(\bar C_{2} T\Big).\label{eq: Gu estimate lem}
\end{align}
\end{lems}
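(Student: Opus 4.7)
The plan is to replicate the architecture of the proof of Lemma~\ref{lem: vertical}, applying the It\^o--Marcus chain rule to $|u^\e_t-u_t|^p$ and producing a decomposition analogous to the one with terms $H_1,\dots,H_6$. The decisive new feature is that the unperturbed equation itself carries the main L\'evy noise $\ffF\diamond dZ$, so at each jump the two integral curves $u^\e$ and $u$ start from distinct base points $(u^\e_{s-},v^\e_{s-})$ and $(u_{s-},0)$. A naive Gronwall-type comparison of the Marcus jump $\Phi^{\ffF\Delta_s Z}(u^\e_{s-},v^\e_{s-})-\Phi^{\ffF\Delta_s Z}(u_{s-},0)$ would cost a factor $e^{\ell\|\Delta_s Z\|}$ and thus force exponential moments of $Z$ that are not available; this trap is exactly avoided by Lemma~\ref{lem: invariance lemma}, statement~1), which furnishes a comparison quadratic in $\|\Delta_s Z\|$, so that Hypothesis~1 with $\int\|z\|^p\nu(dz)<\infty$ already suffices.

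Term by term, I would telescope each difference $\ffF_\ast(u^\e,v^\e)-\ffF_\ast(u,0)$ through the intermediate point $\ffF_\ast(u,v^\e)$ (for $\ffF_\ast\in\{\ffF_0,\ffF,\fK_H\}$) to split it into a \emph{feedback} piece Lipschitz-controlled by $|u^\e-u|$ and a \emph{source} piece Lipschitz-controlled by $|v^\e|=|v^\e-v|$. The small perturbation $\e\ti\fK_H(v^\e)\diamond d\ti Z$ depends only on the vertical coordinate, so it is handled along the same lines as $\e\ti\fK_V$ in Lemma~\ref{lem: vertical}, with Lemma~\ref{lem: invariance lemma}, statement~2), controlling the Marcus remainder and with the fourth-moment integral $\int\|z\|^4\nu'(dz)$ covered by the $2p$-th moment assumption on $\ti Z$. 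The compensated Poisson martingales that arise are then bounded by Kunita's maximal inequality in the $L^2$-regime for the leading martingale and by the Saint Loubert Bi\'e inequality \cite{SLB98,PZ08} in the $L^1$-regime for the higher-order Marcus remainder, paralleling steps~(\ref{eq: mart square}) and~(\ref{eq: M2}).

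Gathering all contributions and taking $\sup_{t\lqq T}$, an inequality of the schematic form
\begin{align*}
\EE\big[\sup_{t\lqq T}|u^\e_t-u_t|^p\big]
\lqq C\int_0^T\EE\big[\sup_{s\lqq r}|u^\e_s-u_s|^p\big]\,dr+\Sigma^\e(T)
\end{align*}
emerges, where the source $\Sigma^\e(T)$ collects all transversal and explicit $\e$-contributions. Substituting the estimates~(\ref{eq: vertical lem}) and~(\ref{eq: 2p}) into $\Sigma^\e$ and integrating in time yields $\Sigma^\e(T)\lqq C\,\e^{2p}(1+(T\vee t_0))^{2(p+1)}$, after which the classical Gronwall lemma produces the exponential factor $\exp(\bar C_2 T)$ asserted in~(\ref{eq: Gu estimate lem}). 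The main obstacle, as foreshadowed, is the Marcus jump comparison for the main noise $Z$: without the quadratic-in-$\|\Delta Z\|$ bound from Lemma~\ref{lem: invariance lemma} one would be driven back to the exponential-moment framework of~\cite{HR}, defeating the purpose of the present $p$-th moment extension; the rest of the argument is essentially bookkeeping with Young's inequality to absorb the $|u^\e-u|^{p-1}$-feedback into the $p$-th power plus residual $\e^p$-terms.
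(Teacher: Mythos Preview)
Your overall architecture is correct and matches the paper closely: It\^o--Marcus chain rule on $|u^\e-u|^p$, Lemma~\ref{lem: invariance lemma} for the Marcus remainders, Kunita's and Saint Loubert Bi\'e's maximal inequalities for the compensated Poisson integrals, and the vertical estimates~(\ref{eq: p}),~(\ref{eq: 2p}) as source terms. Where your proposal diverges from the paper, and where there is a genuine gap, is the final closure argument.

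You assert a \emph{linear} integral inequality and then invoke classical Gronwall. In the paper's actual computation this is not what emerges. The martingale arising from the main noise $Z$ (the analogue of the paper's term $I_2$, coming from $(\ffF(u^\e_{s-},v^\e_{s-})-\ffF(u_{s-},v_{s-}))\,dZ_s$) carries \emph{no} factor of~$\e$, so it cannot be pushed into the source~$\Sigma^\e$. To bound $\EE[\sup_{[0,T]}|I_2|]$ one is forced through the $L^2$ maximal inequality, since the linear Saint Loubert Bi\'e bound would require $\int_{\RR^r}\|z\|\,\nu(dz)<\infty$, which is not available for the small-jump part under Hypothesis~1(b). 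The $L^2$ route delivers a term of the form $C\big(\int_0^T\EE[\sup_{[0,s]}|u^\e-u|^p]\,ds\big)^{1/2}$, so the collected estimate reads
\[
\psi(T)\ \lqq\ C_{28}\Big(\int_0^T\psi(s)\,ds+\Big(\int_0^T\psi(s)\,ds\Big)^{\frac12}+\e^p\,T^p(1+T)\Big),
\qquad \psi(T):=\EE\big[\sup_{[0,T]}|u^\e-u|^p\big],
\]
which is genuinely nonlinear. The paper closes it by a nonlinear comparison with $G(x)=x+\sqrt{x}$, using the convexity of $G^{-1}$ together with $G^{-1}(x)\lqq x^2$, and it is precisely this step that produces the exponent $2p$ on $\e$, the threshold $t_0$, and the power $2(p+1)$ in~(\ref{eq: Gu estimate lem}). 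In your sketch you claim $\Sigma^\e(T)\lqq C\e^{2p}(1+(T\vee t_0))^{2(p+1)}$ directly from the vertical bounds, but those only give order $\e^p$; the squaring to $\e^{2p}$ and the appearance of $t_0$ are artifacts of the nonlinear comparison, not of the source term itself.
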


\begin{proof}
For convenience of notation we restart with the numbering of constants. 
Formally we obtain 
\begin{align}
u^{\e}_t - u_t & =  \int_0^t (\ffF_0(u^{\e}_s, v^{\e}_s) - \ffF_0(u_s, v_s)) ds 
+ \int_0^t (\ffF(u_{s-}^\e, v_{s-}^\e) -\ffF(u_{s-}, v_{s-})) \diamond d Z_s \nonumber\\
& \qquad + \e\, \int_0^t \big(\fK_H(u^\e_s, v^\e_s) - \fK_H(u_s, v_s)\big) ds + \e \int_0^t \fK_H(u_s, v_s) ds 
+ \e \int_0^t \ti\fK_H(v_{s-}^\e) \diamond d \ti Z_s.
\label{eq: horizontal formal canonical Marcus}
\end{align}
This equation is defined in $\RR^{n}$ as 
\begin{align*}
u_t^{\e} - u_t & = \int_0^t [\ffF_0(u^\e_s, v^\e_s) - \ffF_0(u_s, v_s)] ds \\
&\quad + \int_0^t [\ffF(u_{s-}^\e, v^\e_{s-}) - \ffF(u_{s-}, v_{s-}) ]d Z_s\\
&\quad + \sum_{0 < s\lqq t} 
\big[(\Phi^{\ffF \Delta_s Z}(u^\e_{s-}, v^\e_{s-}) -\Phi^{\ffF \Delta_s Z}(u_{s-}, v_{s-}))\\
&\qquad\qquad -(u^{\e}_{s-} -u_{s-}, v^{\e}_{s-} -v_{s-}) - (\ffF(u^\e_{s-}, v^\e_{s-})-\ffF(u_{s-}, v_{s-})) \Delta_s Z\big] \\
&\quad+ \e\, \int_0^t \big(\fK_H(u^\e_s, v^\e_s) - \fK_H(u_s, v_s)\big) ds + \e \int_0^t \fK_H(u_s, v_s) ds 
+ \e \int_0^t \ti\fK_H(v_{s-}^\e) d\ti Z_s\\
&\quad + \sum_{0 < s\lqq t} 
\big[\Phi^{\e\ti\fK_H \Delta_s \ti Z}(u^\e_{s-}) - u^{\e}_{s-} - (\e\ti\fK_H(v^\e_{s-})) \Delta_s \ti Z\big].
\end{align*}
The change of variable formula for (\ref{eq: horizontal formal canonical Marcus}) yields formally 
\begin{align*}
 |u_t^{\e} - u_t|^p  
 & = p\int_0^t  |u^\e_s- u_s|^{p-2} \lgl u^\e_s- u_s, \ffF_0(u^\e_s, v^\e_s) - \ffF_0(u_s, v_s)\rgl ds \\
 & \quad + p\int_0^t  |u_{s-}^\e- u_{s-}|^{p-2} \lgl u_{s-}^\e- u_{s-}, (\ffF(u^\e_{s-}, v^\e_{s-}) - \ffF(u_{s-}, v_{s-})) \diamond dZ_s\rgl\\
 & \quad + \e\, p\int_0^t |u^\e_s- u_s|^{p-2} \lgl u^\e_s- u_s, \fK_H(u^\e_s, v^\e_s) - \fK_H(u_s, v_s)\rgl ds \\
 &\quad + \e p\int_0^t |u^\e_s- u_s|^{p-2} \lgl u^\e_s- u_s, \fK_H(u_s, v_s)\rgl ds\\
 & \quad + p\int_0^t  |u_{s-}^\e- u_{s-}|^{p-2} \lgl u_{s-}^\e- u_{s-}, \e \ti \fK_H(v^\e_{s-})\diamond d\ti Z_s\rgl.
\end{align*}
This is defined in $\RR^{n}$ as 
\begin{align}
 |u_t^{\e} - u_t|^p  
 & = p\int_0^t  |u^\e_s- u_s|^{p-2} \lgl u^\e_s- u_s, \ffF_0(u^\e_s, v^\e_s) - \ffF_0(u_s, v_s)\rgl ds \tag{$I_1$} \\
 & \quad + p\int_0^t  |u_{s-}^\e- u_{s-}|^{p-2} \lgl u_{s-}^\e- u_{s-}, (\ffF(u^\e_{s-}, v^\e_{s-}) - \ffF(u_{s-}, v_{s-})) dZ_s\rgl\tag{$I_2$} \\
& \quad + p\sum_{0< s\lqq t}  |u_{s-}^\e- u_{s-}|^{p-2} \lgl u_{s-}^\e- u_{s-}, 
\Phi^{\ffF \Delta_s Z}(u^\e_{s-}, v^\e_{s-}) -\Phi^{\ffF \Delta_s Z}(u_{s-}, v_{s-}) \nonumber\\
& \qquad \qquad \qquad \qquad  -(u^{\e}_{s-} -u_{s-}, v^\e_{s-}-v_{s-}) - (\ffF(u^\e_{s-}, v^\e_{s-})-\ffF(u_{s-}, v_{s-})) \Delta_s Z\rgl\tag{$I_3$}\\
 & \quad + \e\, p\int_0^t |u^\e_s- u_s|^{p-2} \lgl u^\e_s- u_s, \fK_H(u^\e_s, v^\e_s) - \fK_H(u_s, v_s)\rgl ds \tag{$I_4$}\\
 &\quad + \e p\int_0^t |u^\e_s- u_s|^{p-2} \lgl u^\e_s- u_s, \fK_H(u_s, v_s)\rgl ds \tag{$I_5$}\\
 &\quad + \e p \int_0^t |u_{s-}^\e- u_{s-}|^{p-2} \lgl u_{s-}^\e- u_{s-}, \ti \fK_H(v^\e_{s-}) d \ti Z_s \rgl  \tag{$I_6$}\\
  &\quad + p\sum_{0< s\lqq t} |u_{s-}^\e- u_{s-}|^{p-2} \lgl u_{s-}^\e- u_{s-}, \Phi^{\e \ti \fK_H\Delta_s \ti Z}(v_{s-}^\e)-\Phi^{\e \ti \fK_H\Delta_s \ti Z}(v_{s-})\nonumber\\
 &\qquad \qquad - (v^\e_{s-}-v_{s-})- \e (\ti \fK_H(v_{s-}^\e)-\ti \fK_H(v_{s-}))\Delta_s \ti Z\rgl \tag{$I_7$}\\[2mm]
  &\quad + p\sum_{0< s\lqq t} |u_{s-}^\e- u_{s-}|^{p-2} \lgl u_{s-}^\e- u_{s-}, \Phi^{\e \ti \fK_H\Delta_s \ti Z}(v_{s-})
- v_{s-}- \e \ti\fK_H(v_{s-})\Delta_s \ti Z\rgl \tag{$I_8$}\\
  &=: I_1 + I_2 + I_3 + I_4 + I_5 + I_6 + I_7 + I_8. \label{eq: pivot}
\end{align}

We now estimate the eight summands on the right-hand side one by one. 
The estimates $I_1$ and $I_4$ are straight forward Lipschitz estimates. 
For the stochastic It\^o terms we use the different kinds of maximal inequalities. 
The estimate of the canonical Marcus terms $I_3$, $I_7$ and $I_8$ 
is the most laborious task in which we exploit 
the result of Lemma \ref{lem: invariance lemma}.  
The term $I_5$ is straight forward. 

\paragraph{1. Estimate of the stochastic It\^o integral terms $I_2$ and $I_6$: } 
$\mathbf{I_2}$: Due to the existence of moments of order at least $1$, $I_2$ 
has the following representation with respect to the compensated Poisson random measure associated to $Z$
\begin{align}
&\int_0^t  |u_{s-}^\e- u_{s-}|^{p-2} \lgl u_{s-}^\e- u_{s-}, \big(\ffF(u^\e_{s-}, v^\e_{s-}) - \ffF(u_{s-}, v_{s-})\big) dZ_s\rgl\nonumber\\
&= \int_0^t \int_{\RR^r} 
|u_{s-}^\e- u_{s-}|^{p-2} \lgl u_{s-}^\e- u_{s-}, \big(\ffF(u^\e_{s-}, v^\e_{s-}) - \ffF(u_{s-}, v_{s-})\big) z \rgl \ti N(ds dz)\label{eq: term J1}\\
&\qquad + \int_0^t \int_{\|z\|>1} 
|u^\e_s- u_s|^{p-2} \lgl u^\e_s- u_s, \big(\ffF(u^\e_s, v^\e_s) - \ffF(u_s, v_s)\big) z \rgl \nu(dz) ds.\label{eq: term J2}
\end{align}
For the first term (\ref{eq: term J1}) we apply the embedding $L^2\subset L^1$, 
Kunita's maximal inequality (see \cite{Ku04} or \cite{Ap09}) for exponent equal to $2$, 
the compactness of $L_{x_0}$ implying $|u^\e_{\cdot}(x_0)-u_{\cdot}(x_0)|\lqq \diam L_{x_0}$, 
and the elementary Young inequality for the exponents $p$ and $p/(p-1)$ 
combined with inequality (\ref{eq: p}). We obtain a positive constant $C_1>0$ 
such that for $C_2 = 2 C_1 (\int_{\RR^r} \|z\|^2\nu(dz))^{1/2} \diam L_{x_0}^{p}$ and $C_3 = C_{23}$ 
we have 
\begin{align}
&\EE\left[\sup_{t\in [0, T]}
\Big|\int_0^t \int_{\RR^r} 
|u_{s-}^\e- u_{s-}|^{p-2} \lgl u_{s-}^\e- u_{s-}, \big(\ffF(u^\e_{s-}, v^\e_{s-}) - \ffF(u_{s-}, v_{s-})\big) z \rgl \ti N(ds dz)\Big| \right] \nonumber\\
&\lqq C_1 \EE\left[
\int_0^T \int_{\RR^r} 
|u^\e_s- u_s|^{2(p-2)} |\lgl u^\e_s- u_s, \big(\ffF(u^\e_s, v^\e_s) - \ffF(u_s, v_s)\big) z \rgl |^2 \nu(dz) ds\right]^\frac{1}{2} \nonumber\\
& \lqq C_1 
\EE\left[\int_0^T \int_{\RR^r} 
|u^\e_s- u_s|^{2(p-1)} \Big(|u^\e_s-u_s|^2 + |v^\e_s - v_s|^2\Big) \|z\|^2 \nu(dz) ds \right]^\frac{1}{2}\nonumber\\
&\lqq C_2\, \bigg(\int_0^T \EE\Big[\sup_{[0, s]} |u^\e- u|^{p} \Big]ds\bigg)^\frac{1}{2} 
+ C_{3} \e^p T^{p+\frac{1}{2}}
\label{I2 fertig}.
\end{align}
The second term is less delicate. Young's inequality for the exponents $p/(p-1)$ and $p$ 
and yields
\begin{align}
&\EE\Big[\sup_{t\in [0,T]} \int_0^t \int_{\|z\|>1} 
|u^\e_s- u_s|^{p-2} \lgl u^\e_s- u_s, \big(\ffF(u^\e_s, v^\e_s) - \ffF(u_s, v_s)\big) z \rgl \nu(dz) ds \Big]\nonumber\\
&\lqq \ell \int_{\|z\|>1} \|z\| \nu(dz) \EE\Big[\sup_{t\in [0,T]} \int_0^t 
\big(|u^\e_s- u_s|^{p} + |u^\e_s- u_s|^{p-1} |v^\e_s- v_s|\big) ds \Big]\nonumber\\
&\lqq \ell \int_{\|z\|>1} \|z\| \nu(dz)\Big( 2 \int_0^T ~\EE\Big[\sup_{[0,s]} |u^\e- u|^{p}\Big] ds 
+ \int_0^T  \EE\Big[|v^\e_s- v_s|^{p} \Big] ds\Big)\nonumber\\
&\lqq C_4 \int_0^T \EE\Big[\sup_{[0,s]} |u^\e- u|^{p}\Big] ds + C_5 \e^p  T^{p+1}\label{I2 comp}.
\end{align}
$\mathbf{I_6}$: Let $\ti N'$ be the compensated Poisson random measure associated to 
the L\'evy process $\ti Z$. Then 
\begin{align}
&\sup_{t\in [0, T]} \e \int_0^t |u_{s-}^\e- u_{s-}|^{p-2} \lgl u_{s-}^\e- u_{s-}, \ti \fK_H(v^\e_{s-}) d\ti Z_s\rgl\nonumber \\
&=\sup_{t\in [0, T]} \e \int_0^t \int_{\RR^r}  |u_{s-}^\e- u_{s-}|^{p-2} 
\lgl u_{s-}^\e- u_{s-}, (\ti \fK_H(v^\e_{s-})-\ti \fK_H(v_{s-}))z \rgl \ti N'(ds dz) \tag{$J_1$}\\
&\qquad +\sup_{t\in [0, T]} \e \int_0^t \int_{\|z\|>1} 
|u^\e_s- u_s|^{p-2} \lgl u^\e_s- u_s, (\ti \fK_H(v^\e_s)-\ti \fK_H(v_s)) z \rgl \nu'(dz) ds\tag{$J_2$}\\
&\qquad +\sup_{t\in [0, T]} \e \int_0^t \int_{\RR^r}  |u_{s-}^\e- u_{s-}|^{p-2} 
\lgl u_{s-}^\e- u_{s-}, \ti \fK_H(v_{s-})z \rgl \ti N'(ds dz) \tag{$J_3$}\\
&\qquad +\sup_{t\in [0, T]} \e \int_0^t \int_{\|z\|>1} 
|u^\e_s- u_s|^{p-2} \lgl u^\e_s- u_s, \ti \fK_H(v_s) z \rgl \nu'(dz) ds.\tag{$J_4$}
\end{align}
The terms $J_1$ and $J_2$ are structurally identical to (\ref{eq: term J1}) and (\ref{eq: term J2}) and 
are estimated analogously to (\ref{I2 fertig}) and (\ref{I2 comp}) where $\ffF$ is replaced by $\ti \fK_H$ 
which yield the estimates 
\begin{align*}
&\EE\Big[\sup_{t\in [0, T]} |\e \int_0^t \int_{\RR^r}  |u_{s-}^\e- u_{s-}|^{p-2} 
\lgl u_{s-}^\e- u_{s-}, (\ti \fK_H(v^\e_{s-})-\ti \fK_H(v_{s-}))z \rgl \ti N'(ds dz)|\Big]\\
&\lqq C_6 \left(\int_0^T \EE\Big[\sup_{[0,s]} |u^\e- u|^{p}\Big] ds\right)^\frac{1}{2} + C_7 \e^p T^{p+\frac{1}{2}} ~ 
~ \mbox{ and}\\
&\EE\Big[\sup_{t\in [0, T]} \e |\int_0^t \int_{\|z\|>1} 
|u^\e_s- u_s|^{p-2} \lgl u^\e_s- u_s, (\ti \fK_H(v^\e_s)-\ti \fK_H(v_s)) z \rgl \nu'(dz) ds| \Big]\\
&\lqq C_8 \int_0^T \EE\Big[\sup_{[0,s]} |u^\e- u|^{p}\Big] ds + C_9 \e^p  T^{p+1}.
\end{align*}
For the term $J_3$ we observe that $v_s = 0$ such that $\fK_V(v_s)$ is constant. 
Kunita's maximal inequality for the exponent $2$ yields the constant $C_{10}$ 
and the boundedness of $|u^\e_s-u_s|$ the constant $C_{11} = C_{10}\left(\int_{\RR^r} \|z\|^2 \nu'(dz) (\diam L_{x_0})^{p-2}\right)^\frac{1}{2}$ 
such that 
\begin{align*}
&\EE\Big[\sup_{t\in [0, T]} \e |\int_0^t \int_{\RR^r}  |u_{s-}^\e- u_{s-}|^{p-2} 
\lgl u_{s-}^\e- u_{s-}, \ti \fK_H(v_{s-})z \rgl \ti N'(ds dz)| \Big]\\
&\lqq \e \EE \Big[\sup_{t\in [0, T]}  |\int_0^t \int_{\RR^r}  |u_{s-}^\e- u_{s-}|^{p-2} 
\lgl u_{s-}^\e- u_{s-}, \ti \fK_H(v_{s-})z \rgl \ti N'(ds dz)|^2 \Big]^\frac{1}{2}\\
&\lqq \e C_{10} \left(\int_0^T \int_{\RR^r}  \EE\Big[ |u^\e_s- u_s|^{2(p-1)} \Big]  \|z\|^2 \nu'(dz) ds\right)^\frac{1}{2} \\
&\lqq \e C_{11} \left(\int_0^T \EE\Big[\sup_{[0,s]} |u^\e- u|^{p}\Big] ds\right)^\frac{1}{2}.
\end{align*}
The term $J_4$ is again easier, for $C_{12} = \int_{\|z\|>1} \|z\| \nu'(dz) \|\ti \fK_H(v_{s-})\|$ we obtain 
\begin{align*} 
&\EE[\sup_{t\in [0, T]} \e \int_0^t \int_{\|z\|>1} 
|u^\e_s- u_s|^{p-2} \lgl u^\e_s- u_s, \ti \fK_H(v_s) z \rgl \nu'(dz) ds ]\\
&~ \lqq \e C_{12}\int_0^T  \EE[\sup_{[0,s]} |u^\e- u|^{p-1}] ds\\
&~ \lqq C_{12}\int_0^T  \EE[\sup_{[0,s]} |u^\e- u|^{p}] ds + C_{12} \e^p T. 
\end{align*}
This yields 
\begin{align}
\EE[\sup_{[0, T]} |I_6|] 
&\lqq C_{13} \bigg(\bigg(\int_0^T \EE\Big[\sup_{t \in [0,s]} |u^\e- u|^{p}\Big] ds\bigg)^\frac{1}{2}
+ \int_0^T \EE\Big[\sup_{t \in [0,s]} |u^\e- u|^{p}\Big] ds 
+  \e^p T^p (2+ T) \bigg). \label{I6 fertig}
\end{align}

\paragraph{2. Estimate of the canonical Marcus terms $I_3$, $I_7$ and $I_8$: }
Lemma \ref{lem: invariance lemma} tells us that there is a positive constant, $C_{14}$, say, 
which depends on the leaf of the initial condition such that 
independent of $\theta \in [0, 1]$
\begin{equation}\label{Equ: second moments}
\Big|\Big[(D\ffF(\Phi^{\ffF z}(y,\theta)) z) \ffF\big(\Phi^{\ffF z}(y,\theta)\big)z\Big] \ 
- \Big[(D\ffF\big(\Phi^{\ffF z}(x,\theta)\big)z) \ffF\big(\Phi^{\ffF z}(x,\theta)\big)z\Big]\Big| 
\leqslant C_{14} |x-y|\|z\|^2.
\end{equation}
Now we apply the Taylor's theorem to $\theta \longmapsto Y(\theta;x, \ffF \Delta_s Z) = \Phi^{\ffF \Delta_s Z}(x,\theta)$ with $\theta \in [0,1]$ 
and inequality (\ref{Equ: second moments}). 
The Poisson random measure representation 
of the random sum writes as the following estimate 
in terms of the quadratic variation of $Z$ 
\begin{align}
|I_3| 
&\lqq \sum_{0< s\lqq t}  |u_{s-}^\e- u_{s-}|^{p-1} 
\big|\Phi^{\ffF \Delta_s Z}(u^\e_{s-}, v^\e_{s-}) -\Phi^{\ffF \Delta_s Z}(u_{s-}, v_{s-}) \nonumber\\
&\qquad -(u^{\e}_{s-} -u_{s-}, v^\e_{s-}-v_{s-}) - (\ffF(u^\e_{s-}, v^\e_{s-})-\ffF(u_{s-}, v_{s-})) \Delta_s Z\rgl\big|\nonumber\\ 
&\lqq \frac{1}{2}\sum_{0< s\lqq t}  |u_{s-}^\e- u_{s-}|^{p-1} \Big|(D\ffF(\Phi^{\ffF \Delta_s Z}(u^\e_{s-}, v^\e_{s-})) \Delta_s Z)
\ffF\big(\Phi^{\ffF \Delta_s Z}(u^\e_{s-}, v^\e_{s-})\big)\Delta_s Z \nonumber\\
&\qquad - (D\ffF\big(\Phi^{\ffF \Delta_s Z}(u_{s-}, v_{s-})\big)\Delta_s Z) \ffF\big(\Phi^{\ffF \Delta_s Z}(u_{s-}, v_{s-})\big)\Delta_s Z\Big|\nonumber\\[2mm] 
&\lqq C_{14} \Big(\sum_{0< s\lqq t}  |u_{s-}^\e- u_{s-}|^{p} \|\Delta_s Z\|^2 + \sum_{0< s\lqq t}  |u_{s-}^\e- u_{s-}|^{p-1} |v^\e_{s-} - v_{s-}| \|\Delta_s Z\|^2\Big)\nonumber\\
&\lqq 2 C_{14} \Big(\sum_{0< s\lqq t}  |u_{s-}^\e- u_{s-}|^{p} \|\Delta_s Z\|^2 
+ \sum_{0< s\lqq t}  |v^\e_{s-} - v_{s-}|^p \|\Delta_s Z\|^2\Big). \label{eq: Marcus estimate}
\end{align}
The representation of this sum 
in terms of the Poisson random measure, for instance in Kunita \cite{Ku04}, 
of the first term is 
\begin{align}
&\sum_{0< s\lqq t} |u^\e_{s-}-u_{s-}|^p \|\Delta_s Z\|^2 \nonumber\\
&= \int_0^t \int_{\RR^r} |u^\e_{s-}-u_{s-}|^p \|z\|^2 \ti N(dsdz) 
+\int_0^t \int_{\|z\|>1} |u^\e_s-u_s|^p  \|z\|^2 ~\nu(dz)~ds \label{eq: Marcus estimate 1st term u}
\end{align}
and the analogous result if $|u^\e_{s-}-u_{s-}|$ is replaced by $|v^\e_{s-}-v_{s-}|$. 
The maximal inequality for integrals with respect to the compensated Poisson random measures 
and inequality (\ref{eq: p}) yield  
\begin{align}
\EE[\sup_{[0, T]} |I_3|] 
&\lqq C_{15} \int_0^T \int_{\RR^r} \Big(\EE[\sup_{[0, s]} |u^\e-u|^p] 
+ \EE[|v^\e_s-v_s|^p]\Big) \|z\|^2 ~\nu(dz)~ds\nonumber\\
&= C_{15} \int_{\RR^r} \|z\|^2 \nu(dz) \Big(\int_0^T \Big(\EE[\sup_{[0, s]} |u^\e-u|^p] 
+ \EE[|v^\e_s-v_s|^p]\Big)  ~ds\Big)\nonumber\\
&\lqq C_{16} \Big(\int_0^T \EE[\sup_{[0, s]} |u^\e-u|^p]  ~ds
+ \e^p  T^p\Big).\label{I3 fertig}
\end{align}
$\mathbf{I_7}$: For $I_7$ we use Lemma \ref{lem: invariance lemma} statement 1) in terms of (\ref{eq: second moments}) 
and Young's inequality 
\begin{align*}
&\sum_{0< s\lqq t} |u_{s-}^\e- u_{s-}|^{p-2} \lgl u_{s-}^\e- u_{s-}, 
\Phi^{\e \ti \fK_H\Delta_s \ti Z}(v_{s-}^\e)-\Phi^{\e \ti \fK_H\Delta_s \ti Z}(v_{s-})\\
&\qquad - (v^\e_{s-}-v_{s-}) - \e (\ti \fK_H(v_{s-}^\e)-\ti \fK_H(v_{s-}))\Delta_s \ti Z\rgl \\[2mm]
&\lqq \sum_{0< s\lqq t} |u_{s-}^\e- u_{s-}|^{p-1} |\Phi^{\e \ti \fK_H\Delta_s \ti Z}(v_{s-}^\e)
-\Phi^{\e \ti \fK_H\Delta_s \ti Z}(v_{s-})
- (v^\e_{s-}-v_{s-}) - \e (\ti \fK_H(v_{s-}^\e)-\ti \fK_H(v_{s-}))\Delta_s \ti Z| \\
&\lqq \e^2 C_{17} \Big(\sum_{0< s\lqq t}  |u_{s-}^\e- u_{s-}|^{p-1} |v^\e_{s-} - v_{s-}| \|\Delta_s \ti Z\|^2\Big)\nonumber\\
&\lqq \e^2 C_{17} \Big(\sum_{0< s\lqq t}  \big(|u_{s-}^\e- u_{s-}|^{p} +  |v^\e_{s-} - v_{s-}|^p\big) \|\Delta_s \ti Z\|^2\Big).
\end{align*}
We rewrite the last expression in terms of the compensated Poisson random measure $\ti N'$ and obtain 
\begin{align}
&\sum_{0< s\lqq t}  \big(|u_{s-}^\e- u_{s-}|^{p} +  |v^\e_{s-} - v_{s-}|^p\big) \|\Delta_s \ti Z\|^2 \nonumber\\ 
&= \int_0^t \int_{\RR^r} \big(|u_{s-}^\e- u_{s-}|^{p} +  |v^\e_{s-} - v_{s-}|^p\big) \|z\|^2 \ti N'(dsdz) \label{stoch int 1}\\
&\qquad +\int_0^t \int_{\|z\|>1} \big(|u^\e_s- u_s|^{p} +  |v^\e_s - v_s|^p\big) \|z\|^2 \nu'(dz) ds.\label{stoch int 2}
\end{align}
The maximal inequality in \cite{SLB98} by Saint Loubert Bi\'e for the exponent $1$  yields
\begin{align*}
&\EE\Big[\sup_{t\in [0, T]} \int_0^t \int_{\RR^r} \big(|u_{s-}^\e- u_{s-}|^{p} +  |v^\e_{s-} - v_{s-}|^p\big) \|z\|^2 \ti N'(dsdz)\Big]\\
&\lqq C_{18} \int_0^T \int_{\RR^r} \EE\Big[|u^\e_s- u_s|^{p} +  |v^\e_s - v_s|^p\Big] \|z\|^2 \nu'(dz) ds\\
&\lqq C_{19} \int_{\RR^r} \|z\|^2 \nu'(dz) 
\Big(\int_0^T \EE\Big[\sup_{[0, s]} |u^\e- u|^{p}\Big]ds  +  \int_0^T \EE\Big[|v^\e_s - v_s|^p\Big] ds\Big)\\
&\lqq C_{20} 
\int_0^T \EE\Big[\sup_{t\in [0, s]} |u^\e- u|^{p}\Big]ds  +  C_{21} \e^p T^{p+1}.
\end{align*}
The term (\ref{stoch int 2}) is treated obviously such that 
\begin{align}
\EE[\sup_{[0, T]} |I_7|] \lqq \e^2 C_{22} \int_{\RR^r} \|z\|^2 \nu'(dz) 
\int_0^T \EE\Big[\sup_{[0, s]} |u^\e- u|^{p}\Big] ds  + C_{23} \e^p  T^{p+1}.\label{I7 fertig}
\end{align}
$\mathbf{I_8}$: For $I_8$ Lemma \ref{lem: invariance lemma}, statement 2), yields 
\begin{align*}
&\sum_{0< s\lqq t} |u_{s-}^\e- u_{s-}|^{p-2} \lgl u_{s-}^\e- u_{s-}, \Phi^{\e \ti \fK_H\Delta_s \ti Z}(v_{s-})- v_{s-}
 - \e \ti \fK_H(v_{s-})\Delta_s \ti Z\rgl \\
&\lqq \sum_{0< s\lqq t} |u_{s-}^\e- u_{s-}|^{p-1} |\Phi^{\e \ti \fK_H\Delta_s \ti Z}(v_{s-})- v_{s-}
 - \e \ti \fK_H(v_{s-})\Delta_s \ti Z| \\ 
&\lqq \e^2 C_{24}\sum_{0< s \lqq t} |u_{s-}^\e- u_{s-}|^{p-1} \|\Delta_s \ti Z\|^2,
\end{align*}
such that Saint Loubert Bi\'e's maximal inequality with exponent $1$ and 
elementary Young's estimate for parameters $\frac{p-1}{p}$ and $p$ yield 
\begin{align}
\EE[\sup_{[0, T]} |I_8|]
&\lqq  \e^2 C_{25} \int_{\RR^r} \|z\|^2 \nu'(dz) \int_{0}^T \EE\Big[\sup_{[0, s]} |u^\e- u|^{p-1}\Big] ds\nonumber\\
&\lqq  \e C_{26} \int_{0}^T \EE\Big[\sup_{[0, s]} |u^\e- u|^{p}\Big] ds + C_{26} \e^p T.\label{I8 fertig}
\end{align}

\paragraph{3. Estimate of $I_5$: }
\begin{align*}
\int_0^T |u_s^\e- u_s|^{p-2} \lgl u^\e_s- u_s, \e \fK_H(u_s, v_s)\rgl ds 
&\lqq C_{27} \int_0^T \e |u^\e_s- u_s|^{p-1} ds \\
&\lqq C_{27} \int_0^T \e |u^\e_s- u_s|^{p} ds + C_{27} \e^p T
\end{align*}
such that 
\begin{align}
\EE[\sup_{[0,T]} |I_5|] \lqq \e C_{27} \int_0^T \EE\Big[\sup_{[0,s]}|u^\e- u|^{p}\Big] ds+ C_{27} \e^p T.\label{I5 fertig}
\end{align}

\paragraph{4. Nonlinear comparison principle: } 
Taking  the supremum and the expectation of the left-hand side of equation (\ref{eq: pivot}) 
and combining the estimates of 
$\sum_{i=1}^8 \EE[\sup_{[0,T]} |I_i|]$ 
given by (\ref{I2 fertig}), (\ref{I2 comp}), (\ref{I6 fertig}), (\ref{I3 fertig}), (\ref{I7 fertig}), (\ref{I8 fertig}) 
and (\ref{I5 fertig}) we obtain a positive constant $C_{28}$
\begin{align*}
 \EE\left[\sup_{t\in [0, T]}|u_t^{\e} - u_t|^p\right] 
&\lqq C_{28} \bigg(\int_{0}^T \EE\left[\sup_{t\in [0, s]}|u^{\e} - u|^p\right] ds
+\bigg(\int_0^T \EE\left[\sup_{t\in [0, s]}|u^{\e} - u|^p\right]ds \bigg)^\frac{1}{2} 
+ \e^p T^p (1+T)\bigg).
\end{align*}
For the concave invertible function $G(x) = x +\sqrt{x}$, $\psi(t) := \EE\left[\sup_{t\in [0, T]}|u_t^{\e} - u_t|^p\right]$ 
and $p(t)= \e^p t^p(1+t)$ 
we have achieved the integral inequality 
\begin{align*}
\psi(t) \lqq C_{28} G\Big(\int_0^t \psi(s) ds\Big) + C_{28} p(t). 
\end{align*}
Hence the convex function $G^{-1}(x) = (x+\frac{1}{2}) - \sqrt{ x - \frac{1}{4}}$ yields for $C_{29} = \max\{2 C_{28}, 4 C_{28}^2\}$ 
\begin{align*}
G^{-1}(\psi(t)) 
&\lqq G^{-1}\bigg(C_{28} G\Big(\int_0^t \psi(s) ds\Big) + C_{28} p(t)\bigg)\\  
&\lqq \frac{1}{2} G^{-1}\bigg(2C_{28} G\Big(\int_0^t \psi(s) ds\Big)\bigg) + \frac{1}{2} G^{-1}\big(2C_{28} p(t)\big)\\
&\lqq C_{29} \int_0^t \psi(s) ds  + C_{29} G^{-1}\big(p(t)\big).
\end{align*}
Note that $G^{-1}(x) \gqq \kappa x$ for all $\kappa \in (0,1)$ and $x \gqq x_0$ for some $x_0>0$, which we calculate as follows. 
\begin{align*}
(x_0+\frac{1}{2}) - \sqrt{x_0+\frac{1}{4}} = \kappa x_0 
\qquad \vzv \qquad x_0  = \frac{\kappa}{(1-\kappa)^2}.
\end{align*}
The quadratic function $q x^2$ satisfying that $q x_0^2 = \kappa x_0$ 
has the prefactor 
\begin{align*}
q x_0^2 = \kappa x_0 \qquad \vzv \qquad q  = \frac{\kappa}{x_0} 
= (1-\kappa)^2,
\end{align*}
and we obtain that any $\kappa \in (0,1)$ and $x >0$ satisfy 
$G^{-1}(x) \gqq \min\{(1-\kappa)^2 x, \kappa x\}$. Hence 
\begin{align*}
\min\{(1-\kappa)^2 \psi^2(t), \kappa \psi(t)\} \lqq G^{-1}(\psi(t)) 
&\lqq C_{29} \int_0^t \psi(s) ds  + C_{29} G^{-1}\big(p(t)\big).
\end{align*}
This implies for $\psi(t) \gqq \kappa x_0 = \frac{\kappa^2}{(1-\kappa)^2}$ 
the integral inequality  
\begin{align*}
\psi(t) &\lqq \frac{C_{29}}{\kappa} \int_{t_0}^{t} \psi(s) ds  + \frac{C_{29}}{\kappa} G^{-1}\big(p(t)\big)
\end{align*}
for $t\gqq t_0$, where $t_0 = \inf\{t>0~|~\psi(t) = \kappa x_0\}$. Gronwall's inequality yields for $t\gqq t_0$ 
\begin{align*}
\psi(t) 
&\lqq \frac{C_{29}}{\kappa} G^{-1}\big(p(t)\big) \exp\Big((t-t_0) \frac{C_{29}}{\kappa}\Big). 
\end{align*}
Since the running supremum $t \mapsto \psi(t)$ is monotonically increasing we also obtain that for any $t\gqq 0$
\begin{align*}
\psi(t) \lqq \frac{C_{29}}{\kappa} G^{-1}\big(p(t) \vee p(t_0)\big) \exp\Big( \frac{C_{29}}{\kappa} t\Big).
\end{align*}
Taking into account that $G^{-1}(x) \lqq x^2$ for all $x\gqq 0$ we obtain for $\kappa = \frac{1}{2}$ and $C_{30} = 2 C_{29}$ 
\begin{align}
\EE\left[\sup_{[0, T]}|u^{\e} - u|^p\right] 
&\lqq C_{30} \e^{2p} (1+(T\vee t_0))^{2(p+1)} \exp\big( C_{30} T\big).\label{eq: Gu estimate}
\end{align}
\end{proof}

\paragraph{IV. End of the proof of Proposition \ref{lem: preliminary}} 
Eventually Minkowski's inequality, the Lipschitz estimate (\ref{eq: ungleichung 1}), the sum of 
the vertical (\ref{eq: vertical lem}) and the horizontal (\ref{eq: Gu estimate lem}) estimate 
yield the desired result for $\bar C_3 = \bar C_1 \vee \bar C_2$ and constants $k_1, k_2>0$ 
\begin{align*}
\EE\left[\sup_{t\in [0, T]}|h(X^\e_t(x_0) ) - h(X_t(x_0))|^p\right]
&\lqq \bar C_{3} \Big(\e^p (1+T^{2p+1}) +C_{30} \e^{2p} (1+(T\vee t_0))^{2(p+1)} \exp\big( C_{30} T\big)\Big)\\
&\lqq k_1 \e^{p} \exp\big(k_2 T\big).
\end{align*}
This finishes the proof. 
\end{proof}

\section{The averaging error}

For convenience we fix the following notation. 
Given $h: M\ra \RR^n$ a globally Lipschitz continuous function and 
$Q^h: V \ra \RR^n$ its average on the 
leaves defined as (\ref{def: average}). 
For $t\gqq 0$, $x_0\in M$ and $\e\in (0,1)$ denote the error term 
\[
\delta^h_{x_0}(\e, t) := \int_{0}^{t 
} 
h(X^\e_{\frac{s}{\e}}(x_0)) - Q^h(\pi(X^\e_{\frac{s}{\e}}(x_0))) ds.
\]

\begin{prp}\label{prp: componente vertical}
 Let the assumptions of Proposition \ref{lem: preliminary} be satisfied for a fixed $p\gqq 2$. 
Then for any $h: M\ra \RR^n$ globally Lipschitz continuous, $\la\in (0,1)$ and $x_0 \in M$ 
there are constants $b_1>0$ and $\e_0\in (0,1)$ such that for $\e\in (0, \e_0]$ and $T \in [0, 1]$ we have 
\begin{align*}
\left(\EE\left[\sup_{s \in [0, T]} |\delta_{x_0}^h(\e,s)|^p \right]\right)^\frac{1}{p} 
\lqq b_1 T \left[ \e^{\la}  +   \eta \left( c T |\ln \epsilon | \right) \right],
\end{align*}
where $c = c_{\la, p}$ is given by Corollary \ref{cor: preliminary} 
and $\eta$ is the ergodic rate of convergence 
given in equation (\ref{def: function eta}) of Hypothesis 3.
\end{prp}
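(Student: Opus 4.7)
The plan is a classical fast-time discretization. First I would change variables to write
\[
\delta^h_{x_0}(\e,s) = \e\int_0^{s/\e}\bigl(h(X^\e_v) - Q^h(\pi(X^\e_v))\bigr)\,dv
\]
and partition the fast-time window $[0,T/\e]$ into $N$ blocks $[j\tau,(j+1)\tau]$ of length $\tau := c_\la T|\ln\e|$, where $c_\la$ is the constant from Corollary~\ref{cor: preliminary}. Since $T\lqq 1$, we have $\tau\lqq c_\la|\ln\e|$, so Corollary~\ref{cor: preliminary} is applicable on each block; moreover $\e N\tau = T$ up to a final partial block whose contribution is bounded trivially by $O(\e\tau)$.

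On each block I would introduce the frozen unperturbed process $\tilde{X}^{(j)}_v := X_{v-j\tau}(X^\e_{j\tau})$ started from $X^\e_{j\tau}$; by the foliated structure it stays on the leaf $L_{X^\e_{j\tau}}$, so that $\pi(\tilde{X}^{(j)}_v)\equiv \pi(X^\e_{j\tau})$ is constant. Telescoping the integrand as
\[
[h(X^\e_v)-h(\tilde{X}^{(j)}_v)] + [h(\tilde{X}^{(j)}_v)-Q^h(\pi(X^\e_{j\tau}))] + [Q^h(\pi(X^\e_{j\tau}))-Q^h(\pi(X^\e_v))]
\]
and integrating on $[j\tau,(j+1)\tau]$ produces three block errors $A_j, B_j, C_j$. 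The strong Markov property at time $j\tau$ together with Corollary~\ref{cor: preliminary} yields $\EE[|A_j|^p]^{1/p}\lqq k_3\tau\e^\la$; Hypothesis~3, applied conditionally on $\fF_{j\tau}$ to the restarted unperturbed trajectory, yields $\EE[|B_j|^p\mid\fF_{j\tau}]^{1/p}\lqq \tau\eta(\tau)$; and the Lipschitz continuity of $Q^h\circ\pi$ combined with the fact that the transversal SDE~(\ref{eq: transversale SDE in koordinaten}) has coefficients of order $\e$ (yielding $\EE[\sup_{v\in[j\tau,(j+1)\tau]}|\pi(X^\e_v)-\pi(X^\e_{j\tau})|^p]^{1/p}\lqq C\e\tau$) gives $\EE[|C_j|^p]^{1/p}\lqq C'\e\tau^2$.

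Summing via Minkowski (after taking the supremum block by block) yields
\[
\EE\bigl[\sup_{s\in[0,T]}|\delta^h_{x_0}(\e,s)|^p\bigr]^{1/p} \lqq \e N\bigl(k_3\tau\e^\la + \tau\eta(\tau) + C'\e\tau^2\bigr) + O(\e\tau) \lqq b_1 T\bigl(\e^\la + \eta(c_\la T|\ln\e|)\bigr),
\]
since $\e N\tau = T$ and the residual $O(\e\tau) = O(\e T|\ln\e|)$ is absorbed into $b_1 T\e^\la$ by shrinking $\e_0$, using $\e|\ln\e|\lqq \e^\la$ for $\la<1$ and $\e$ small. I expect the main obstacle to be the use of Hypothesis~3: it is formulated only for $h=\pi K$, so applying it with the same rate $\eta$ to an arbitrary globally Lipschitz $h$ requires either reading $\eta$ as a uniform rate across Lipschitz observables or invoking an auxiliary argument. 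A secondary technical point is the uniformity in $j$ of the constant $k_3$ from Corollary~\ref{cor: preliminary}, which has to be derived from Hypothesis~1.a (Lipschitz dependence of $\diam L_\cdot$ on $x$) together with a pathwise control of the transversal excursion of $X^\e$ over $[0,T/\e]$.
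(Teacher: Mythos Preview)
Your proposal is correct and follows essentially the same route as the paper: the same fast-time partition with block length $\Delta_\e = cT|\ln\e|$, the same restarted unperturbed process on each block, and the same three-term telescoping decomposition (your $A_j,B_j,C_j$ are precisely the paper's $A_1,A_2,A_3$). You have also correctly anticipated the two genuine technical points: the paper handles the uniformity of $k_3$ over the random leaves $L_{X^\e_{j\tau}}$ by a recursive argument based on Hypothesis~1(a) and the transversal estimate of Lemma~\ref{lem: vertical}, exactly as you suggest; and for the ergodic term it simply cites the Brownian case in \cite{GR13}, so your concern about Hypothesis~3 being stated only for $\pi K$ is legitimate and is not resolved in the paper either.

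One small refinement: your bound $\EE[\sup_{v\in[j\tau,(j+1)\tau]}|\pi(X^\e_v)-\pi(X^\e_{j\tau})|^p]^{1/p}\lqq C\e\tau$ is heuristically right for the drift part but not quite for the stochastic part $\e\ti K\diamond d\ti Z$; the paper's Lemma~\ref{lem: vertical} gives instead $\bar C_1^{1/p}\e(1+\tau^{2p+1})^{1/p}$, which for $\tau = cT|\ln\e|$ is $\e$ times a polylogarithmic factor and is still absorbed into $\e^\la$, so your conclusion stands.
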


\paragraph{Proof of Proposition (\ref{prp: componente vertical}): The common part : } 
Fix $x_0\in M$. For $\e\in (0,1)$ and $T> 0$ we define the partition 
\[
t_0 = 0 < t_1^\e < \dots < t_{N^\e}^\e \lqq \frac{T}{\e} 
\]
with the following step size 
\begin{equation*}
\Delta_\e := -c T \ln(\e),
\end{equation*}
where $c>0$ is given by Corollary \ref{cor: preliminary}. 
The grid points are defined as 
\[
t_n^\e := n \Delta_\e
\qquad \mbox{ for }\qquad 0 \lqq n \lqq N_\e, \quad \e \in (0,1)
\quad \mbox{ and the total number is }\qquad N_\e = \lfloor \frac{1}{c \e |\ln(\e)|} \rfloor +1. 
\]
We rewrite the first summand of $\delta^h$ by 
\begin{align*}
\int_{0}^{t 
} h(X^\e_{\frac{s}{\e}}(x_0)) ds &= \e 
\int_{0}^{\frac{t}{\e} 
} h(X^\e_{s}(x_0)) ds\\
& = \e \sum_{n=0}^{N_{\e}-1} \int_{t_n}^{t_{n+1}} h(X^\e_{s}(x_0)) ds + \e 
\int_{t_{N_\e}}^{\frac{t}{\e} 
}h(X^\e_{s}(x_0)) ds.
\end{align*}
We lighten notation and omit for convenience in the sequel the superscript $\e$ and $h$ 
as well as the initial value $x_0$ whenever possible.  
The triangle inequality yields 
\begin{equation}\label{eq: delta decomposition}
|\delta^h_{x_0}(\e, t)| \lqq |A_1(t, \e)| + |A_2(t, \e)| +|A_3(t, \e)|,
\end{equation}
where 
\begin{align*}
A_1(t, \e) &:= \e \sum_{n=0}^{N_\e-1} \int_{t_n}^{t_{n+1}} [h(X^\e_{s}(x_0)) -h(X_{s-t_n}(X^\e_{t_n}(x_0)))] ~ds, \\[2mm]
A_2(t, \e) &:= \e \sum_{n=0}^{N_\e-1} \int_{t_n}^{t_{n+1}} 
[h(X_{s-t_n}(X^\e_{t_n}(x_0))) - Q(\pi(X^\e_{t_n}(x_0)))] 
~ds, \\[2mm]
A_3(t, \e) &:=\sum_{n=0}^{N_\e-1}\e \Delta_\e Q(\pi(X^\e_{t_n}(x_0))) - \int_{0}^{t_{N_\e}}  Q(\pi(X^\e_{\frac{s}{\e}}(x_0))) ~ds.
\end{align*}
The following three lemmas estimate the preceding terms. This being done the proof of Proposition~\ref{prp: componente vertical} 
is finished. 

\begin{lem}\label{Lemma1}
For any $\la \in (0,1)$ and $x_0\in M$ there are $b_2>0$ and $\e_0\in (0,1)$ such that for any $\e\in (0, \e_0]$ and $T\in [0,1]$ 
\begin{align*}
\left(\EE\left[\sup_{s\in [0, T]} |A_1(s, \e)|^p \right]\right)^{\frac{1}{p}} \lqq b_2 T \e^{\la}.
\end{align*}
\end{lem}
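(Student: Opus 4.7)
The overall strategy is to apply Corollary \ref{cor: preliminary} on each of the $N_\e$ sub-intervals $[t_n, t_{n+1}]$ of length $\Delta_\e = -c T|\ln\e|$, choosing the constant $c$ in the definition of $\Delta_\e$ to equal precisely the constant $c_\la$ delivered by that corollary. Since $T\in[0,1]$, this guarantees $\Delta_\e \lqq T_\e = -c_\la|\ln\e|$, so the corollary applies on every sub-interval with an $\e^\la$ bound in $L^p$.

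First I would bring the absolute value inside the integrand, applying the triangle inequality pathwise to obtain
\begin{align*}
\sup_{s\in[0,T]}|A_1(s,\e)|\;\lqq\; \e\sum_{n=0}^{N_\e-1}\int_{t_n}^{t_{n+1}} \bigl|h(X^\e_s(x_0)) - h(X_{s-t_n}(X^\e_{t_n}(x_0)))\bigr|\, ds.
\end{align*}
Then I would apply Minkowski's inequality to the $L^p$ norm of this finite sum together with Jensen's inequality inside each integral over $[t_n, t_{n+1}]$ (of length $\Delta_\e$), reducing the task to proving a uniform bound
\begin{align*}
\Bigl\|\sup_{u\in[0,\Delta_\e]} \bigl|h(X^\e_{t_n+u}(x_0)) - h(X_u(X^\e_{t_n}(x_0)))\bigr|\Bigr\|_{L^p}\;\lqq\; b\, \e^\la
\end{align*}
uniformly in $n\in\{0,\ldots,N_\e-1\}$.

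Next I would invoke the strong Markov property of $X^\e$ at the deterministic time $t_n$: conditioning on $\fF_{t_n}$ with $X^\e_{t_n}(x_0)=y$, the shifted process $(X^\e_{t_n+u})_{u\gqq 0}$ is distributed as $X^\e$ started from $y$, while its unperturbed companion is $X_u(y)$. Corollary \ref{cor: preliminary} then delivers the conditional bound $k_3(y)\,\e^\la$, where $k_3(y)$ depends polynomially on $\diam L_y$. After taking expectation over $y=X^\e_{t_n}(x_0)$, summing over $n$, and using $\e N_\e \Delta_\e \lqq T + c\e T|\ln\e| \lqq 2T$ for $\e$ sufficiently small, I would conclude with the target bound $b_2 T \e^\la$.

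The hard part will be the uniform control of $\sup_n \EE[k_3(X^\e_{t_n}(x_0))^p]$. Since $k_3$ is a polynomial in $\diam L_y$ with positive coefficients and $y\mapsto \diam L_y$ is globally Lipschitz by Hypothesis 1.a, this reduces to a uniform $L^{p^2}$ moment bound for $X^\e_{t_n}(x_0)$ inside the tubular neighbourhood $U$. Such bounds are available from the transversal and horizontal estimates of Lemmas \ref{lem: vertical} and \ref{lem: horizontal}, together with the stopping at $\tau^\e$ built into the main theorem, which ensures $X^\e$ remains in $U$ throughout the relevant window. This closes the argument and identifies the constant $b_2$.
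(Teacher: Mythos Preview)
Your overall framework---Minkowski on the sum, Markov property at each $t_n$, Corollary \ref{cor: preliminary} on each sub-interval, summing via $\e N_\e\Delta_\e\lqq T$---matches the paper's proof exactly. The point where your proposal becomes too vague is precisely the ``hard part'': controlling $\sup_n\EE[k_3(\diam L_{X^\e_{t_n}(x_0)})]$.

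Your appeal to Lemmas \ref{lem: vertical} and \ref{lem: horizontal} does not work as stated, because those lemmas are valid only under the constraint $\e_0 T<k_0<1$, while here the relevant time horizon is $t_n\sim n\Delta_\e$ with $t_{N_\e}\sim T/\e$, far too long for a direct application. The paper closes this gap by an explicit induction: it applies Lemma \ref{lem: vertical} on each sub-interval $[t_k,t_{k+1}]$ via the Markov property and \emph{propagates} the vertical displacement bound across all $N_\e$ intervals, using that the constant $\bar C_1$ in Lemma \ref{lem: vertical} depends affinely on $\diam L_y$ and that $y\mapsto\diam L_y$ is Lipschitz. This recursion yields $\EE[\dist(L_{x_0},L_{X^\e_{t_n}})^p]^{1/p}\lqq k\,N_\e\,\e^\la$, hence $\EE[k_3(\diam L_{X^\e_{t_n}})]\lqq \al_1(k_3(d_{v_0})+1)+\al_2(kN_\e\e^\la)^p$. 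Since $N_\e\sim(\e|\ln\e|)^{-1}$, the factor $(N_\e\e^\la)^p\sim\e^{p(\la-1)}$ blows up; the paper absorbs it by first running the whole argument with an auxiliary exponent $\la'$ close enough to $1$ that $\la'-p(1-\la')>\la$, then relabelling to obtain the stated $\e^\la$. This iteration and the accompanying parameter loss are the technical heart of the proof and are absent from your outline.

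Your alternative of invoking the stopping at $\tau^\e$ to confine $X^\e$ to $U$ and thereby bound $\diam L_{X^\e_{t_n}}$ uniformly is a genuinely different (and, if $U$ is taken bounded, simpler) route. But the paper deliberately avoids it: Remark 2.7 records that none of the constants depend on the shape of $V$, and accordingly the proofs assume $V=\RR^d$. Moreover, Lemma \ref{Lemma1} and the definition of $A_1$ are stated for the unstopped process, so using $\tau^\e$ would require either restating the lemma or carrying the localization carefully through the argument. It is a workable shortcut that buys $U$-dependent constants, but it is not the argument the paper performs.
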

\begin{proof} 
Fix $\la \in (0,1)$. We apply the Markov property of $X_t$ and H\"older's inequality in the time variable 
and the fact that by definition $N_\e \Delta_\e \lqq T$ 
\begin{align*}
\EE\left[\sup_{s\in [0, T]} |A_1(s, \e)|^p\right]^\frac{1}{p} 
&=\e \sum_{n=0}^{N_\e-1} \EE\Big[\EE\Big[
\big|\int_{t_n}^{t_{n+1}} [h(X^\e_{s}(x_0)) -h(X_{s-t_n}(X^\e_{t_n}(x_0)))] ~ds\big|^p~|~\fF_{t_n}\Big]\Big]^\frac{1}{p}\\
&= \e \sum_{n=0}^{N_\e-1} \EE\Big[\EE\Big[
\big|\int_{0}^{t_{1}} [h(X^\e_{s-t_n}(y)) - h(X_{s-t_n}(y))] ~ds\big|^p
~|~y = X^\e_{t_n}(x_0)\Big]\Big]^\frac{1}{p}\\
&\lqq \e N_\e \Delta_\e 
\EE\Big[\EE\Big[\sup_{s\in [0, t_{1}]} \big|h(X^\e_{s}(X^\e_{t_n}(x_0))) -h(X_{s}(X^\e_{t_n}(x_0)))\big|^p ~|~y = X^\e_{t_n}(x_0)\Big] \Big]^\frac{1}{p}\\
&\lqq T \max_{n\in \{1,\dots, N_\e\}} 
\EE\Big[\EE\Big[\sup_{s\in [0, t_{1}]} \big|h(X^\e_{s}(X^\e_{t_n}(x_0))) -h(X_{s}(X^\e_{t_n}(x_0)))\big|^p ~|~y = X^\e_{t_n}(x_0)\Big]\Big]^\frac{1}{p}\\
&\lqq T \max_{n\in \{1,\dots, N_\e\}} \EE[k_3(\diam(L_{X^\e_{t_n}(x_0)}))] \e^\la.
\end{align*}
The last estimate is an application of  Corollary \ref{cor: preliminary}, 
where we have found that 
the value $k_3 = k_3(d_{v_0})$, $d_{v_0} = \diam(L_{v_0})$ from (\ref{eq: cor preliminary}) 
is a polynomial in $d_{v_0}$ of order $p$ with positive coefficients. 

\noindent First we estimate the term involving $k_3$. 
Since by Hypothesis 1 the mapping $v_0 \mapsto d_{v_0}$ 
is Lipschitz continuous there is a Lipschitz constant $\ell'$ on $U$, 
which yields the constants $\al_1$ and $\al_2$ such that almost surely 
\begin{align}
k_3(d_{X^\e_{t_n}}) 
&\lqq k_3\big(d_{v_0} + \ell' \dist(L_{x_0}, L_{X^\e_{t_n}(x_0)})\big) \nonumber\\
&\lqq \al_1 (k_3(d_{v_0}) + 1) + \al_2 \dist(L_{x_0}, L_{X^\e_{t_n}(x_0)})^p.\label{eq: first step}
\end{align}
The next step consists in the estimate of the term $\EE[\dist(L_{x_0}, L_{X^\e_{t_n}(x_0)})^p]$. 
Using $X_t(x_0) \in L_{x_0}$ for all $t\gqq 0$ 
Lemma \ref{lem: vertical} ensures the existence of the positive constant 
$\bar C_1(d_{v_0})$ which is an affine function in $d_{v_0}$ such that 
for $k = C_0 \bar C_1(d_{v_0})$ 
\begin{equation}\label{eq: anfangswert}
\EE[\sup_{t\in [0, t_1]} \dist(L_{x_0}, L_{X^\e_{t}(x_0)})^p]^\frac{1}{p} \lqq 
C_0 \EE\Big[\sup_{s\in [0, t_1]} |v_s^\e(x_0) - v_s(x_0)|^p\Big]^\frac{1}{p}\lqq k \e^\la,
\end{equation}
where $C_0$ is the Lipschitz constant of the local coordinates $\phi$ with Lipschitz constant $\ell''$. 
We obtain 
\begin{align*}
\EE[\sup_{t\in [0, t_2]} \dist(L_{v_0}, L_{X^\e_{t}(x_0)})^p]^\frac{1}{p}
&\lqq \EE[\sup_{t\in [0, t_1]} \dist(L_{v_0}, L_{X^\e_{t}(x_0)})^p]^\frac{1}{p} 
+\EE[\EE[\sup_{t\in [t_1, t_2]} \dist(L_{v_0}, L_{X^\e_{t}(x_0)})^p~|~\fF_{t_1}]]^\frac{1}{p}.
\end{align*}
The first term on the right-hand side obeys (\ref{eq: anfangswert}), the second one 
can be calculated recursively 
\begin{align*}
\EE[\EE[\sup_{t\in [t_1, t_2]} \dist(L_{v_0}, L_{X^\e_{t}(x_0)})^p~|~\fF_{t_1}]]^\frac{1}{p}
&\lqq \EE[\EE[\sup_{t\in [0, t_1]} \dist(L_{x_0}, L_{X^\e_{t}(y)})^p~|~y = X^\e_{t_1}(x_0) ]]^\frac{1}{p}\\
&\lqq C_0 \EE[\bar C_1(d_{X^\e_{t_1}(x_0)})]\e^\la \\
&\lqq (k+ \ell'' k \e^\la) \e^\la.
\end{align*}
Hence 
\begin{align*}
\EE[\sup_{t\in [0, t_2]} \dist(L_{v_0}, L_{X^\e_{t}(x_0)})^p]^\frac{1}{p}
&\lqq \frac{k}{\ell''} (2 \ell'' \e^\la + (\ell''\e^{\la})^2). 
\end{align*}
We argue by induction  
\begin{align*}
&\EE[\sup_{t\in [0, t_{n+1}]} \dist(L_{v_0}, L_{X^\e_{t}(x_0)})^p]^\frac{1}{p} \\
&\lqq \EE[\sup_{t\in [0, t_{n}]} \dist(L_{v_0}, L_{X^\e_{t}(x_0)})^p]^\frac{1}{p} + 
\EE[\EE[\sup_{t\in [0, t_{1}]} \dist(L_{v_0}, L_{X^\e_{t}(y)})^p~|~y = X^\e_{t_n}(x_0)]]^\frac{1}{p} \\ 
&\lqq \frac{k}{\ell''} \bigg(\Big(\sum_{i=0}^n \binom{n}{i} (\ell'' \e^\la)^i -1\Big)  + 
C_0 \EE[\bar C_1(d_{X^\e_{t_n}(x_0)})] \e^\la.
\end{align*}
We continue with the second term 
\begin{align*}
C_0 \EE[\bar C_1(d_{X^\e_{t_n}(x_0)})] \e^\la
&\lqq 
C_0 \bar C_1(d_{v_0}) \e^\la + \ell'' \EE[\sup_{t\in [0, t_{n}]} \dist(L_{v_0}, L_{X^\e_{t}(x_0)})] \e^\la 
\\
&\lqq 
k \e^\la + \ell'' \EE[\sup_{t\in [0, t_{n}]} \dist(L_{v_0}, L_{X^\e_{t}(x_0)})^p]^\frac{1}{p} \e^\la 
\\
&\lqq 
\frac{k}{\ell''} \ell'' + \ell''\e^\la \frac{k}{\ell''}\Big(\sum_{i=0}^n \binom{n}{i} (\ell'' \e^\la)^i -1\Big)\bigg) \\
\end{align*}
eventually leading to  
\begin{align*}
\EE[\sup_{t\in [0, t_{N_\e+1}]} \dist(L_{v_0}, L_{X^\e_{t}(x_0)})^p]^\frac{1}{p} 
&\lqq \frac{k}{\ell''} \bigg(\Big(\sum_{i=0}^{N_\e} \binom{N_\e}{i} (\ell'' \e^\la)^i -1\Big)  + 
\frac{k}{\ell''}\Big(\sum_{i=1}^{N_\e+1} \binom{N_\e}{i+1} (\ell'' \e^\la)^i -1\Big)\bigg)\\
&= \frac{k}{\ell''} \bigg(\sum_{i=0}^{N_\e+1} \binom{N_\e+1}{i}(\ell'' \e^\la)^i -1\bigg)\\
&= \frac{k}{\ell''}\big((1+\ell'' \e^\la)^{N_\e}-1\big) 
\lqq k N_\e  \e^\la
\end{align*}
for $\e \in (0, \e_0]$ sufficiently small. 
Combining the preceding result with (\ref{eq: first step}) yields for $\e \in (0, \e_0]$ 
\begin{align*}
\EE\left[\sup_{s\in [0, T]} |A_1(s, \e)|^p\right]^\frac{1}{p} 
&\lqq T \max_{n\in \{1,\dots, N_\e\}} \EE[k_3(\diam(L_{X^\e_{t_n}(x_0)}))] \e^\la \\
&\lqq T \e^\la \Big(\al_1 (k_3(d_{v_0}) + 1) + \al_2 \EE[\sup_{t\in [0, t_n]} \dist(L_{x_0}, L_{X^\e_{t}(x_0)})^p]\Big) \\
&\lqq T \e^\la \Big(\al_1 (k_3(d_{v_0}) + 1) + \al_2 (k N_\e  \e^\la)^p\Big),
\end{align*}
and a positive constant $b_2$ which yields the desired result for any $\e \in (0, \e_0]$ and $\ti \la \in (0,\la - p(1-\la))$ 
\begin{align*}
\EE\left[\sup_{s\in [0, T]} |A_1(s, \e)|^p\right]^\frac{1}{p} 
\lqq b_2 T \e^{\ti \la}.
\end{align*}
\end{proof}

\begin{lem}\label{Lemma2}Let $\eta$ be the rate of convergence defined in (\ref{def: function eta}). 
For the process $A_2$ in inequality (\ref{eq: delta decomposition}), $T>0$ fixed and 
$\la \in (0,1)$ here is $c = c(\la) \in (0,1)$ 
for which there are $\e_0\in (0,1)$ and $b_3>0$ such that for any $T\in [0,1]$ and $\e \in (0, \e_0]$   we have 
\begin{align*}
\left(\EE\left[\sup_{s\in [0, T]} |A_2(s, \e)|^p \right]\right)^{\frac{1}{p}} \lqq 
 b_3 T \ \eta \left( c T |\ln \epsilon |\right). 
\end{align*}
\end{lem}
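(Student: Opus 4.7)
\textbf{Proof plan for Lemma \ref{Lemma2}.}
The strategy is to exploit the (strong) Markov property of the unperturbed process $X$ on each grid block $[t_n,t_{n+1}]$ to reduce the block integral to a quantity already controlled by the ergodic rate in Hypothesis~3, and then to pay only a Minkowski cost over the $N_\e$ blocks. No stochastic maximal inequality is needed, because $A_2(s,\e)$ is already piecewise constant along the coarse partition and the summation is purely deterministic.

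First, I would rewrite $A_2(s,\e)=\e\sum_{n=0}^{N_\e-1}B_n$ where
\[
B_n := \int_{t_n}^{t_{n+1}}\bigl[h(X_{s-t_n}(X^\e_{t_n}(x_0)))-Q^h(\pi(X^\e_{t_n}(x_0)))\bigr]\,ds,
\]
and observe that inside each block the process is the \emph{unperturbed} $X$ restarted from the $\fF_{t_n}$-measurable point $y:=X^\e_{t_n}(x_0)$. By the strong Markov property of $X$, conditional on $\fF_{t_n}$ the law of $B_n$ coincides (after the change of variables $u=s-t_n$) with the law of
\[
\Delta_\e\Bigl(\tfrac{1}{\Delta_\e}\int_0^{\Delta_\e}h(X_u(y))\,du - Q^h(\pi(y))\Bigr)
\]
for a fresh copy of $X$ issued from $y$, and evaluated at $y=X^\e_{t_n}(x_0)$.

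Second, I would apply Hypothesis~3 leafwise (the Lipschitz continuity of $v\mapsto Q^h(v)$ from Hypothesis~2(b) making the centering uniform in the initial leaf) to deduce
\[
\EE\bigl[|B_n|^p\,\big|\,\fF_{t_n}\bigr]^{1/p}\;\leq\;\Delta_\e\,\eta(\Delta_\e),
\]
and hence $\EE[|B_n|^p]^{1/p}\leq\Delta_\e\,\eta(\Delta_\e)$ uniformly in $n$. Since the running supremum of $A_2(\cdot,\e)$ on $[0,T]$ is attained on the discrete mesh, Minkowski's inequality for the $L^p$-norm of the finite sum gives
\[
\Bigl(\EE\bigl[\sup_{s\in[0,T]}|A_2(s,\e)|^p\bigr]\Bigr)^{1/p}\;\leq\;\e\sum_{n=0}^{N_\e-1}\EE[|B_n|^p]^{1/p}\;\leq\;\e\,N_\e\,\Delta_\e\,\eta(\Delta_\e).
\]

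Third, I would substitute the explicit mesh parameters $\Delta_\e=cT|\ln\e|$ and $N_\e=\lfloor 1/(c\e|\ln\e|)\rfloor+1$ to get $\e N_\e\Delta_\e\leq T+c\e T|\ln\e|\leq 2T$ for all $\e\in(0,\e_0]$ with $\e_0$ small; monotonicity of $\eta$ then delivers the announced bound $b_3\,T\,\eta(cT|\ln\e|)$ with some $b_3>0$. The constant $c=c(\la)$ is the one inherited from Corollary~\ref{cor: preliminary} through the common partition used in (\ref{eq: delta decomposition}).

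The main obstacle is really step two: Hypothesis~3 as formulated concerns the specific integrand $\pi K$, whereas the statement of the lemma is given for arbitrary globally Lipschitz $h$. The natural reading is that, thanks to the Lipschitz regularity of $v\mapsto Q^h(v)$ in Hypothesis~2(b), the ergodic rate for each individual leaf can be combined into one uniform rate function (still denoted $\eta$, up to harmless constants); once this upgrade is granted, the rest of the argument is bookkeeping with Minkowski's inequality. In the application to the main theorem only $h=\pi K$ is actually needed, so in that case Hypothesis~3 applies verbatim.
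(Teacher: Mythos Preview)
Your approach is correct and matches the paper's, which simply defers to Lemma~3.3 of \cite{GR13} with the remark that the proof is ``virtually identical'' in the present setting; the Markov restart on each block, the application of the ergodic rate from Hypothesis~3, and the Minkowski summation over the $N_\e$ blocks with $\e N_\e\Delta_\e\leq CT$ is exactly the intended argument. One cosmetic point: the phrase ``the running supremum of $A_2(\cdot,\e)$ is attained on the discrete mesh'' is not literally true since the block contributions $B_n$ need not have a common sign, but the inequality you write follows anyway from the triangle inequality $\sup_s|A_2(s,\e)|\leq \e\sum_n|B_n|$ (with the partial last block bounded crudely by $\e\Delta_\e\cdot\|h\|_\infty$, which is of lower order), so the conclusion stands. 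Your closing remark on the scope of Hypothesis~3 is well taken: in the proof of Theorem~\ref{thms: main result 1} only $h=\fK_V=\pi K$ is invoked, so the literal hypothesis suffices there.
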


\noindent The proof is virtually identical to the proof of Lemma 3.3 in \cite{GR13} in the purely Brownian case. 

\begin{lem}\label{Lemma3}
Due to Hypothesis 2 $Q^h$ is globally Lipschitz continuous. 
Then the process $A_3$ in inequality (\ref{eq: delta decomposition}) satisfies the following. 
For any $\la\in (0,1)$ there are $\e_0\in (0,1)$ and $b_4>0$ 
such that for any $\e \in (0, \e_0]$ and $T\in [0,1]$ 
\begin{align*} 
\left(\EE\left[\sup_{s\in [0, T]} |A_3(s,\e)|^p \right]\right)^{\frac{1}{p}} \lqq 
b_4 T \e^\la.
\end{align*}
\end{lem}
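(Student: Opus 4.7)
The plan is to view $A_3$ as a pure discretization error and reduce its estimation to a short-time increment bound on the slow component $v^\e := \pi(X^\e)$. Setting $Q = Q^h$ for brevity and performing a change of variable $u = s'/\e$ in the integral of $A_3$ (with upper limit $\e t_{N_\e}$ in slow time, corresponding to fast time $t_{N_\e}$), the two terms telescope to
\begin{align*}
A_3(s,\e) = \e \sum_{n=0}^{N_\e-1} \int_{t_n}^{t_{n+1}} \bigl[Q(v^\e_{t_n}(x_0)) - Q(v^\e_u(x_0))\bigr]\, du,
\end{align*}
which is in fact independent of $s$. Since $Q^h$ is globally Lipschitz on $V$ by Hypothesis 2(b), with constant $L_Q$ say, one obtains the deterministic bound
\begin{align*}
\sup_{s \in [0,T]} |A_3(s,\e)| \lqq \e\, L_Q \sum_{n=0}^{N_\e-1} \int_{t_n}^{t_{n+1}} |v^\e_u - v^\e_{t_n}|\, du.
\end{align*}

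The key intermediate step is the short-time increment bound
\begin{align*}
\max_{0 \lqq n \lqq N_\e - 1} \Bigl\| \sup_{\theta \in [0, \Delta_\e]} |v^\e_{t_n+\theta} - v^\e_{t_n}|\Bigr\|_{L^p} \lqq K\, \e\, \Delta_\e,
\end{align*}
which I propose to establish by the same machinery used in the proof of Lemma \ref{lem: vertical}. By the Markov property, conditionally on $\fF_{t_n}$ the increment process $\xi_\theta := v^\e_{t_n+\theta} - v^\e_{t_n}$ satisfies a Marcus SDE of the same structure as (\ref{eq: transversale SDE in koordinaten}) but starting from $0$. The drift contribution is $O(\e \Delta_\e)$ thanks to Lipschitz continuity of $\fK_V$ and local boundedness of $v^\e$ before the stopping time $\tau^\e$; Kunita's $L^p$-maximal inequality applied to the compensated Poisson integral (using $\int \|z\|^{2p}\nu'(dz)<\infty$ from Hypothesis 1(b)) yields $O(\e \sqrt{\Delta_\e})$; and Lemma \ref{lem: invariance lemma} combined with the Saint Loubert Bi\'e maximal inequality controls the Marcus correction by $O(\e^2 \Delta_\e)$. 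Since $\Delta_\e = cT|\ln\e| \gqq 1$ for $\e$ small enough, the $\e\Delta_\e$ term dominates.

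Equipped with this, H\"older's inequality on the $N_\e$-term sum and on the integrals of length $\Delta_\e$ gives
\begin{align*}
\EE\bigl[\sup_{s\in [0,T]} |A_3(s,\e)|^p\bigr]
&\lqq \e^p L_Q^p (N_\e \Delta_\e)^{p-1} \sum_{n=0}^{N_\e - 1} \int_{t_n}^{t_{n+1}} \EE|v^\e_u - v^\e_{t_n}|^p\, du\\
&\lqq C L_Q^p \e^p (N_\e \Delta_\e)^p (\e \Delta_\e)^p.
\end{align*}
The definitions of $N_\e$ and $\Delta_\e$ yield $\e N_\e \Delta_\e \lqq 2T$ for $\e$ small, so the $p$-th root is bounded by
\begin{align*}
\Bigl(\EE\bigl[\sup_{s\in [0,T]} |A_3(s,\e)|^p\bigr]\Bigr)^{1/p}
\lqq C' L_Q \, T\, \e \Delta_\e \lqq C'' T^2 \, \e\, |\ln \e|.
\end{align*}
For any $\la \in (0,1)$ one has $\e|\ln\e| \lqq \e^\la$ for $\e$ small enough; combined with $T \lqq 1$ this yields the claimed bound $b_4 T \e^\la$.

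The main technical obstacle is the short-time increment bound itself: since $v^\e_{t_n}$ is random and generically non-zero, one cannot directly apply the moment estimate from Lemma \ref{lem: vertical}, which rests on the vanishing initial data $v_0 = 0$. The way around this is to analyse instead the \emph{increment} $\xi_\theta$, which by the Markov property satisfies an SDE of the same form as (\ref{eq: transversale SDE in koordinaten}) starting from $0$, so that the exact same It\^o-type computation as in Lemma \ref{lem: vertical}, now over the short window $[0, \Delta_\e]$, produces the required estimate with constants uniform in $n$ (thanks to the local boundedness of $v^\e$ up to $\tau^\e$).
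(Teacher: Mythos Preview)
Your approach is correct and structurally the same as the paper's: both rewrite $A_3$ as a discretization error, use the Lipschitz property of $Q^h$ to reduce to $\e\Delta_\e\sum_n \sup_{[t_n,t_{n+1}]}|v^\e_s-v^\e_{t_n}|$, and then estimate the short-time vertical increment over each window of length $\Delta_\e$.

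The one place where you work harder than necessary is the increment bound. The ``technical obstacle'' you flag --- that $v^\e_{t_n}$ is random and nonzero so Lemma~\ref{lem: vertical} does not apply directly --- is illusory. By the Markov property the increment $v^\e_{t_n+\theta}(x_0)-v^\e_{t_n}(x_0)$ has the conditional law of $v^\e_\theta(y)-v^\e_0(y)$ with $y=X^\e_{t_n}(x_0)$; but the \emph{unperturbed} vertical coordinate is constant, $v_\theta(y)=\pi(y)=v^\e_0(y)$ for all $\theta$, so this increment is exactly $v^\e_\theta(y)-v_\theta(y)$. Lemma~\ref{lem: vertical} (with the time horizon $\Delta_\e$, hence the $\e^\la$ of Corollary~\ref{cor: preliminary}) therefore applies verbatim, and there is no need to re-derive the SDE estimate for $\xi_\theta$ from scratch. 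This is the route the paper takes.

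The remaining issue is that the constant $\bar C_1$ in Lemma~\ref{lem: vertical} depends on the leaf diameter of the (random) initial point $y=X^\e_{t_n}(x_0)$. You handle this by invoking local boundedness of $v^\e$ before $\tau^\e$, which is fine for the eventual application; the paper instead tracks the growth of $\EE[\bar C_1(\diam L_{X^\e_{t_n}(x_0)})]$ explicitly via the same recursive argument already developed in the proof of Lemma~\ref{Lemma1}, obtaining a factor $(1+\ell N_\e\e^\la)$ which is absorbed into the final exponent. Either route closes the argument.
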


\begin{proof} 
We calculate 
\begin{align}
|A_3(T,\e)| &=  \Big| \sum_{n=0}^{N_\e-1} \e \Delta_\e Q(\pi(X^\e_{t_n}))
- \int_{0}^{N_\e \Delta_\e}  Q(\pi(X^\e_{\frac{s}{\e}})) ~ds \Big|\nonumber\\
& \lqq \e \sum_{n=0}^{N_\e-1} \Delta_\e \sup_{\e t_n \lqq s < \e t_{n+1}}  
|Q(\pi(X^\e_s)) - Q(\pi(X^\e_{t_n}))|\nonumber\\
& \lqq \e \Delta_\e C \sum_{n=0}^{N_\e-1} \sup_{t_n \lqq s < t_{n+1}} |v^\e_s - v^\e_{t_n}|. \label{eq: A3 estimate}
\end{align}
Minkowski's inequality, the Markov property, 
Lemma \ref{lem: vertical} with $\bar C_1$ in $d_{v_0}$ 
and estimate (\ref{eq: anfangswert}) lead to 
\begin{align*}
\EE[\sup_{s\in [0, T]} |A_3(s,\e)|^p]^\frac{1}{p}
& \lqq T C \max_{n\in \{1, \dots, N_\e\}} 
\EE[\EE[\sup_{t_n \lqq s < t_{n+1}} |v^\e_{s-t_{n}}(y) - v^\e_{0}(y)|^p~|~y = X_{t_n}(x_0)]]^\frac{1}{p}\\
& \lqq T C \max_{n\in \{1, \dots, N_\e\}} 
\EE[\EE[\sup_{t_0 \lqq s < t_{1}} |v^\e_{s}(y) - v^\e_{0}(y)|^p~|~y = X_{t_n}(x_0)]]^\frac{1}{p}\\
& \lqq T C \EE\Big[\bar C_1(\diam(L_{X_{t_{N_\e}}(x_0)}))\Big] \e^\la \\
& \lqq T C\bar C_1(d_{v_0}) (\e^\la + \ell N_\e \e^{2\la})\\
&\lqq T b_4 \e^{\ti \la}
\end{align*}
for $\ti \la \in (0, 2\la-1)$. 
\end{proof}

\noindent \textbf{(Proof of Proposition \ref{prp: componente vertical}:} 
Combining Minkowski's inequality with Lemma \ref{Lemma1}-\ref{Lemma3} yields 
for any $\la\in (0,1)$ and $p\gqq 2$ 
constants $c_{\la, p}$, $\e_0, k_0\in (0,1)$ and $b_1>0$ 
such that for any $T\in [0,1]$ satisfying $\e_0 T < k_0$ 
$\e\in (0, \e_0]$ implies 
\begin{align*}
\left(\EE\left[\sup_{t\in [0, T]} |\delta_{x_0}^h(\e, t)|^p\right]\right)^{\frac{1}{p}} 
&\lqq  b_1\, T \Big( \e^{\la}  +  \eta(c T |\ln(\epsilon)| \Big). 
\end{align*}

\section{Proof of the main result} \label{sec: proof of the main result}

We keep the notation of the Proof of Proposition \ref{prp: componente vertical}. 
By the change of variable formula for canonical Marcus integrals, 
\cite{KPP95} Proposition 4.2,  we may rewrite (\ref{eq: transversale SDE in koordinaten})
\begin{align*}
v^\e_{\frac{t}{\e}
} 
&= \int_0^{t 
} \fK_V(u^\e_{\frac{s}{\e}-}, u^\e_{\frac{s}{\e}-}) ds + \int_0^{t 
} (\ti \fK_V)(v^\e_{\frac{s}{\e}-}) \diamond d \ti Z_s.
\end{align*}
Since equation (\ref{def: w}) tells us that $w \big(t\big) = \int_0^t Q^{\fK_V}(w(s)) ds + \int_0^t \ti \fK_V(w(s-)) \diamond d\ti Z_s$ 
we obtain 
\begin{align*}
&v^\e_{\frac{t}{\e}
} - w \big(t\big) \\
& = \int_0^{t
} \fK_V(u^\e_\frac{s}{\e},v^\e_\frac{s}{\e}) - Q^{\fK_V}(w(s)) ds + 
\int_{0}^{t
} \big(\ti \fK_V(v^\e_{\frac{s}{\e}-}) - \ti \fK_V(w(s-))\big) \diamond d\ti Z_s\\
& = \delta_{x_0}^{\fK_V}(\e, t) + \int_0^{t} Q^{\fK_V}(u^\e_\frac{s}{\e}, v^\e_\frac{s}{\e}) - Q^{\fK_V}(w(s)) ds 
+ \int_{0}^{t} 
\big(\ti \fK_V(v^\e_{\frac{s}{\e}-}) - \ti \fK_V( w(s-))\big) d\ti Z_s\\
&\quad +\sum_{0< s \lqq t} 
\Big(\Phi^{\ti \fK_V\Delta_{s-} \ti Z}(v^\e_{\frac{s-}{\e}}) - \Phi^{\ti \fK_V \Delta_s \ti Z}(w(s-)) - (v^\e_{\frac{s-}{\e}}-w(s-)) - 
\big(\ti \fK_V(v^\e_{\frac{s-}{\e}})- \ti K_V(w(s-)) \big)\Delta_s \ti Z\Big)\\
&= \delta_{x_0}^{\ti \fK_V}(\e, t) + O_1 + O_2 + O_3.
\end{align*}
Now, for $T\in [0, 1]$ we obtain with the help of Jensen's inequality 
\begin{align*}
\EE\Big[\sup_{[0, T]} |O_1|^p\Big] 
&\lqq a_1 T^{p-1}\int_0^T \EE\Big[\sup_{t\in [0, s]} |v^\e_\frac{t}{\e}-w(t)|^p\Big]ds.
\end{align*}
Kunita's maximal inequality \cite{Ku04} yields a positive constant $a_2 = a_2(p)$ such that 
for the constant $a_3 = a_2 \ti \ell (\int_{\RR^r} \|z\|^2 \nu'(dz)^{p/2} + 2\int_{\RR^r} \|z\|^p \nu'(dz))$ 
we have 
\begin{align*}
\EE\Big[\sup_{[0, T
]} |O_2|^p\Big] 
&\lqq \EE\Big[\sup_{t\in [0, T]} |\int_{0}^{t
} \int_{\RR^r}\big(\ti \fK_V(v^\e_{\frac{s}{\e}-}) - \ti \fK_V( w(s-))\big)z \ti N'(dsdz)|^p\Big]\\
&\qquad +\EE\Big[\sup_{t\in [0, T]} |\int_{0}^{t
} \int_{\|z\|>1}\big(\ti \fK_V(v^\e_{\frac{s}{\e}}) - \ti \fK_V( w(s))\big)z \nu'(dz) ds|^p\Big]\\ 
&\lqq a_2 \EE\Big[\int_0^{T
}\int_{\RR^r} | \big(\ti \fK_V(v^\e_{\frac{s}{\e}}) - \ti \fK_V( w(s))\big)z|^p \nu'(dz) ds\Big] \\
&\qquad +a_2 \EE\Big[\Big(\int_0^{T
}\int_{\RR^r} | \big(\ti \fK_V(v^\e_{\frac{s}{\e}}) - 
\ti \fK_V( w(s))\big)z|^2 \nu'(dz) ds\Big)^\frac{p}{2}\Big]\\
&\qquad +\int_{\|z\|>1} \|z\|^p \nu'(dz) 
\EE\Big[\int_{0}^{T
} \big| \ti \fK_V(v^\e_{\frac{s}{\e}}) - \ti \fK_V( w(s))\big|^p ds\Big]\\ 
&\lqq a_3 \int_0^{T} \EE\Big[\sup_{s\in [0, t]} |v^\e_{\frac{s}{\e}
}-w(s)|^p\Big] dt.
\end{align*}
Finally the Lipschitz continuity of the vector fields $\ti \fK_V$ and $(D \ti \fK_V) \ti \fK_V$ 
and Lemma~\ref{lem: invariance lemma} provide a constant $a_4>0$ 
\begin{align}
| O_3|^p &\lqq  \Big|\sum_{0< s\lqq T
} \Phi^{\ti\fK_V \Delta_s \ti Z}(v^\e_{\frac{s-}{\e}}) 
-\Phi^{\ti \fK_V \Delta_s \ti Z}(w_{s-}) \nonumber \\
&\qquad\qquad  -(v^{\e}_{\frac{s-}{\e}} -w(s-)) 
- (\e \ti \fK_V(v^\e_{\frac{s-}{\e}})-\e \ti \fK_V(w({s-}))) \Delta_s \ti Z\Big|^p\nonumber\\[3mm]
& \qquad \lqq  \Big(\frac{1}{2} \sum_{0< s\lqq T} |(D \ti \fK_V(v^{\e}_{\frac{s-}{\e}})\Delta_s \ti Z) 
\ti \fK_V(v^{\e}_{\frac{s-}{\e}})\Delta_s \ti Z 
-(D \ti \fK_V(w(s-))\Delta_s \ti Z) \ti \fK_V(w(s-))\Delta_s \ti Z| \Big)^{p}\nonumber\\[2mm]
& \qquad \lqq  \Big(a_4 \sum_{0< s\lqq T} |v^{\e}_{\frac{s-}{\e}} -w(s-)| \|\Delta_s \ti Z\|^2 \Big)^{p}\nonumber\\[2mm]
& \qquad \lqq (a_4)^p \Big(\sum_{0< s\lqq T} |v^{\e}_{\frac{s-}{\e}} -w(s-)| \|\Delta_s \ti Z\|^2 \Big)^{p}.
\label{eq: Marcus estimate J3}
\end{align}
Switching to the representation in terms of the Poisson random measure  
\begin{align}
&\sum_{0< s\lqq T} |v^{\e}_{\frac{s-}{\e}} -w(s-) \|\Delta_s \ti Z\|^2 \nonumber\\
&= \int_0^T \int_{\RR^r} |v^{\e}_{\frac{s-}{\e}} -w(s-)| \|z\|^2 \ti N'(dsdz) 
+\int_0^T \int_{\|z\|>1} |v^{\e}_{\frac{s}{\e}} -w(s)|  \|z\|^2 ~\nu'(dz)~ds, \label{eq: Marcus estimate 1st term}
\end{align}
we obtain
\begin{align}
\EE[\sup_{[0, T]} |O_3|^p] 
&\lqq 2^{p-1}(a_4)^p \bigg(\EE\Big[\sup_{t\in [0,T]} \big|\int_0^t \int_{\RR^r} |v^{\e}_{\frac{s-}{\e}} -w(s-)| \|z\|^2 \ti N'(dsdz)\big|^p\Big] &\nonumber\\
& +\EE\Big[\big|\int_0^T \int_{\|z\|>1} |v^{\e}_{\frac{s}{\e}} -w(s)| \|z\|^2 ~\nu'(dz)~ds\big|^p\Big]\bigg)\nonumber\\
& =: O_4 + O_5. \label{eq: major estimate}
\end{align}
We apply Kunita's maximal inequality \cite{Ap09} which yields a constant $a_5 = a_5(p)>0$ and Jensen's inequality. 
For $a_6 = 2^{p-1}(a_4 a_5)^p$ and $a_7 = a_6 (\int_{\RR^r}  \|z\|^{2p} \nu'(dz) + (\int_{\RR^r} \|z\|^{4} \nu'(dz))^\frac{p}{2}$ 
we hence obtain 
\begin{align}
O_4 &\lqq a_6  \bigg( \EE\Big[\int_0^t \int_{\RR^r} |v^{\e}_{\frac{s}{\e}} -w(s)|^p  \|z\|^{2p} \nu'(dz)ds\Big] \nonumber\\
&\qquad + \EE\Big[\Big(\int_0^t \int_{\RR^r} |v^{\e}_{\frac{s}{\e}} -w(s)|^2 \|z\|^{4} \nu'(dz)ds\Big)^\frac{p}{2}\Big]\bigg) \nonumber\\
&= a_6 \bigg\{ \int_{\RR^r}  \|z\|^{2p} \nu'(dz)\;\EE\Big[ \int_0^t |v^{\e}_{\frac{s}{\e}} -w(s)|^p ds\Big] \nonumber\\
&\qquad + \Big(\int_{\RR^r} \|z\|^{4} \nu'(dz)\Big)^\frac{p}{2}\; 
\EE\Big[\Big(\int_0^t|v^{\e}_{\frac{s}{\e}} -w(s)|^2 ds\Big)^\frac{p}{2} \Big]\bigg\} \nonumber\\
&\lqq a_7(T^{\frac{p}{2}-1}+1) \int_0^T \EE\left[ \sup_{s\in [0, t]} |v^{\e}_{\frac{s}{\e}} -w(s)|^p  \right] dt. \label{eq: J1}
\end{align}
The term $O_5$ follows straight forward. 
Summing up $\sum_{i=1}^5 \EE[ \sup_{t\in [0, T]} |O_i|^p ]$ 
we obtain a constant $a_8$ such that for $T\in [0, 1]$ 
\begin{align*}
\EE\Big[\sup_{t\in [0, T]} |v^\e_{\frac{t}{\e}
} - w \big(t\big)|^p\Big] 
&\lqq a_8 
\int_0^T \EE\left[ \sup_{s\in [0, t]} |v^{\e}_{\frac{s}{\e}} -w(s)|^p  \right] dt 
+ \EE\Big[\delta_{x_0}^{\ti \fK_V}(T, \e)^p\Big].
\end{align*}
The standard nonautonomous version of Gronwall's lemma
implies a constant $a_9>0$ 
such that for $T\in [0, 1]$ 
\begin{align*}
\EE\left[ \sup_{s\in [0, T]} |v^{\e}_{\frac{s-}{\e}} -w(s-)|^p  \right]
&\lqq \EE\Big[\delta_{x_0}^{\fK_V}(T, \e)^p\Big] \exp(a_9 T) 
\lqq a_{10} \EE\Big[\delta_{x_0}^{\fK_V}(T, \e)^p\Big].
\end{align*}
Finally an application of Proposition \ref{prp: componente vertical} for $h = \fK_V$ finishes the proof of 
Theorem \ref{thms: main result 1}.

\section{Appendix }
\subsection{Detailed calculations of the example }
\label{subsec: appendix1}

In the sequel we verify (\ref{eq: convergencia Student L2}). Keeping in mind 
$Q^{\pi_r K}(x_0) = \frac{a+d}{2}r$ 
we obtain 
\begin{align}
&\EE\Big[\frac{1}{t}\int_0^t \pi_r K(X_s) ds\Big]\nonumber\\
&= \frac{r}{t}\int_0^t 
\EE\Big[\big(a \sin^2(Z_s) + d \cos^2(Z_s) + (b+c) \sin(Z_s)\cos(Z_s)\big)
\Big]ds\label{eq: exL1}\\
&= \frac{r}{t} a t + \frac{r}{t}\int_0^t 
\EE\Big[\big((d-a) \cos^2(Z_s) + (b+c) \sin(Z_s)\cos(Z_s)\big)
\Big]ds\nonumber\\
&= ra + \frac{r}{t} 
(d-a) \int_0^t \EE\Big[\cos^2(Z_s)\Big]ds  + (b+c) \int_0^t \EE[\sin(Z_s)\cos(Z_s)]ds\nonumber\\
&= ra + \frac{r}{t} 
(d-a) \int_0^t \frac{1}{2} \Re \EE\Big[\exp(i 2 Z_s)\Big]ds + \frac{d-a}{2} r \nonumber\\
&= \frac{a+d}{2} r +  
\frac{d-a}{2}\frac{r}{t} \int_0^t \Re \exp(-s \Psi(2))\;ds\nonumber\\
&= \frac{a+d}{2} r +  
\frac{d-a}{2}\frac{r}{t} \int_0^t \exp(-s \Re \Psi(2))\cos(s \Re \Psi(2))\;ds.\nonumber
\end{align}
Since $\Re\Psi(2)> 0$ we have 
\begin{align*}
\EE\Big[\Big|\frac{1}{t}\int_0^t \pi_r K(Z_s) ds-\frac{a+d}{2} r\Big|\Big]
\lqq \frac{|d-a|}{2}\frac{r}{t} \int_0^t \exp(-s  \Psi(2))\;ds
= \frac{|d-a|}{2 \Psi(2)} \frac{r}{t} \ra 0, \quad t\ra \infty. 
\end{align*}
This shows the result for $p=1$. For $p=2$ we continue 
\begin{align*}
&\EE\Big[\Big|\frac{1}{t}\int_0^t \pi_r K(Z_s) ds- \frac{a+d}{2}r\Big|^2\Big]\nonumber\\
&= \EE\Big[\Big|\frac{1}{t}\int_0^t 
 r \big(a \sin^2(Z_s) + d \cos^2(Z_s) + (b+c) \sin(Z_s)\cos(Z_s)\big)
ds- \frac{a+d}{2}r\Big|^2\Big]\nonumber\\
&= r^2 \EE\Big[\Big(\frac{1}{t}\int_0^t 
 \big(a \sin^2(Z_s) + d \cos^2(Z_s) + (b+c) \sin(Z_s)\cos(Z_s)\big)
ds\Big)^2\\
&\qquad - (a+d) \frac{1}{t}\int_0^t 
 \big(a \sin^2(Z_s) + d \cos^2(Z_s) + (b+c) \sin(Z_s)\cos(Z_s)\big)
ds+ \Big(\frac{a+d}{2}\Big)^2\Big]\nonumber\\
&= r^2 \bigg(\EE\Big[\Big(\frac{1}{t}\int_0^t 
 \big(a \sin^2(Z_s) + d \cos^2(Z_s) + (b+c) \sin(Z_s)\cos(Z_s)\big)
ds\Big)^2\Big]\\
&\qquad - (a+d) \underbrace{\EE\Big[\frac{1}{t}\int_0^t 
 \big(a \sin^2(Z_s) + d \cos^2(Z_s) + (b+c) \sin(Z_s)\cos(Z_s)\big)
ds\Big]}_{\ra \frac{a+d}{2}, \quad t\ra \infty \mbox{\footnotesize{ by (\ref{eq: exL1})}}} + \Big(\frac{a+d}{2}\Big)^2\bigg).\nonumber
\end{align*}
We calculate directly
\begin{align*}
E_t = &\EE\Big[\Big(\frac{1}{t}\int_0^t 
 \big(a \sin^2(Z_s) + d \cos^2(Z_s) + (b+c) \sin(Z_s)\cos(Z_s)\big)
ds\Big)^2\Big]\\
&= \frac{1}{t^2} 
\int_0^t \int_0^t 
\Big( a^2 \EE\Big[ a^2 \sin^2(Z_s) \sin^2(Z_\si)\Big] + d^2 \EE\Big[\cos^2(Z_s) \cos^2(Z_\si)\Big] \\
&\qquad \qquad + (b+c)^2\EE\Big[\sin(Z_s)\cos(Z_s)\sin(Z_\si)\cos(Z_\si)\Big] \\
&\qquad \qquad + a d \EE\Big[\sin^2(Z_s)\cos^2(Z_\si)] +a d \EE\Big[\cos^2(Z_s)\sin^2(Z_\si)\Big] \\
&\qquad \qquad + a (b+c) \EE\Big[\sin^2(Z_s) \sin(Z_\si)\cos(Z_\si)\Big]+a (b+c) \EE\Big[\sin(Z_s)\cos(Z_s)\sin^2(Z_\si)\Big] \\
&\qquad \qquad + d(b+c) \EE\Big[\cos^2(Z_s) \sin(Z_\si)\cos(Z_\si)\Big] + d(b+c) \EE\Big[\sin(Z_s)\cos(Z_s) \cos^2(Z_\si)\Big]\Big) \;d\si ds.
\end{align*}
We apply the elementary identities 
\begin{align*}
\cos(x)^2 \cos^2(y) &= \frac{1}{8}\Big(\cos(2(x-y))+\cos(2(x+y)) +2 \cos(2x) + 2\cos(2y) +2\Big)\\
\sin(x)^2 \sin^2(y) &= \frac{1}{8}\Big(\cos(2(x-y))+\cos(2(x+y)) -2 \cos(2x) - 2\cos(2y) +2\Big)\\
\sin(x)^2 \cos^2(y) &= -\frac{1}{8}\Big(\cos(2(x-y))+\cos(2(x+y)) +2 \cos(2x) - 2\cos(2y) -2\Big)\\
\sin(x)^2 \sin(y) \cos(y) &= \frac{1}{8}\Big(\sin(2(x-y))+\sin(2(x+y)) + 2\sin(y) +2\Big)\\
\cos(x)^2 \sin(y) \cos(y) &= -\frac{1}{8}\Big(\sin(2(x-y))+\sin(2(x+y)) - 2\sin(y) +2\Big)
\end{align*}
and obtain 
\begin{align*}
E_t 
&= \frac{1}{t^2} 
\int_0^t \int_0^t 
\frac{a^2}{8} \Big\{ \Re\EE\Big[e^{i 2(Z_s-Z_\si)}\Big] + \Re\EE\Big[e^{i 2(Z_s+Z_\si)}\Big] 
-2  \Re\EE\Big[e^{i 2Z_s}\Big] -2 \Re \EE[e^{i 2Z_\si}]+2\Big\}\\ 
&\qquad \qquad+ \frac{d^2}{8} \Big\{\Re \EE\Big[e^{i2(Z_s-Z_\si)}\Big] + \Re\EE\Big[e^{i 2(Z_s+Z_\si)}\Big]
+ 2 \Re \EE\Big[e^{i 2Z_s}\Big] +2 \Re \EE\Big[e^{ 2Z_\si}\Big]+2\Big\} \\
&\qquad \qquad + \frac{(b+c)^2}{8}\Big(\Re\EE\Big[e^{i 2(Z_s-Z_\si)}\Big]-\Re\EE\Big[e^{i 2(Z_s+Z_\si)}\Big] \\
&\qquad \qquad - \frac{a d}{4} \Big(\Re \EE\Big[e^{i2(Z_s-Z_\si)}\Big] + \Re\EE\Big[e^{i 2(Z_s+Z_\si)}\Big] 
+2\Re \EE\Big[e^{2Z_s}\Big] -2 \Re \EE\Big[e^{i2Z_\si}\Big]-2\Big) \\
&\qquad \qquad + \frac{a (b+c)}{4} \Big\{\Im \EE\Big[e^{i2(Z_s-Z_\si)}\Big]-\Im \EE\Big[e^{i2(Z_s+Z_\si)}\Big] + 2\Im \EE\Big[e^{2Z_\si}\Big]\Big]\Big\}\\
&\qquad \qquad - \frac{d(b+c)}{4} \Big\{\Im\EE\Big[e^{i 2(Z_s-Z_\si)}\Big]-\Im \EE\Big[e^{i 2(Z_s+Z_\si)}\Big] - 2\Im \EE\Big[e^{i2Z_\si}\Big] \Big\} \;d\si ds.
\end{align*}
Since
\begin{align*}
\EE\Big[e^{i2(Z_s\pm Z_\si)}\Big] = \EE\Big[e^{i2 Z_{s\pm \si}}\Big] = e^{-(s\pm \si) \Psi(2)}\qquad \mbox{ and } \qquad \EE\Big[e^{i4 Z_s}\Big] = e^{-s\Psi(4)}
\end{align*}
for $s\gqq \si$ we get 
\begin{align*}
E_t &= \frac{1}{t^2} 
\int_0^t \int_0^t 
\frac{a^2}{8} \Big\{ \exp(-|s-\si| \Re \Psi(2))\cos(|s-\si| \Im \Psi(2)) +  
\exp(-(s+\si) \Re \Psi(2)) \cos((s+\si) \Re \Psi(2)) \\
&\qquad \qquad \qquad -2  \exp(-s \Re \Psi(2)) \cos(s \Im \Psi(2)) -2 \exp(-\si \Re \Psi(2)) \cos(\si \Im \Psi(2))
+ 2\Big\}\\ 
&\qquad \qquad+ \frac{d^2}{8} \Big\{ \exp(-|s-\si| \Re \Psi(2)) \cos(|s-\si| \Im \Psi(2)) +
\exp(-(s+\si)\Re \Psi(2)) \cos((s+\si) \Im \Psi(2))\\
&\qquad \qquad \qquad +2  \exp(-s \Re \Psi(2)) \cos(s \Im \Psi(2)) +2 \exp(-\si \Re \Psi(2))\cos(\si \Im \Psi(2)) + 2\Big\}\\
&\qquad \qquad - \frac{a d}{4} \Big\{ \exp(-|s-\si| \Re \Psi(2))\cos(|s-\si| \Im \Psi(2)) + 
\exp(-(s+\si)\Psi(2))\cos((s+\si) \Im \Psi(2)) \\
&\qquad \qquad \qquad +2  \exp(-s \Re \Psi(2)) \cos(s \Im \Psi(2)) -2 \exp(-\si \Re \Psi(2)) \cos(s\Im \Psi(2))- 2\Big\}
\\
&\qquad \qquad + \frac{(b+c)^2}{8}\Big\{\exp(-|s-\si| \Re \Psi(2))\cos(|s-\si|\Im \Psi(2))-
\exp(-(s+\si) \Re \Psi(2)) \cos((s+\si) \Im \Psi(2))\Big\}\\
&\qquad \qquad + \frac{a (b+c)}{4} \Big\{\exp(-|s-\si|\Re \Psi(2)) \sin(-|s-\si| \Im \Psi(2))
+\exp((s+\si) \Re \Psi(2)) \sin((s+\si) \Im \Psi(2))\\
&\qquad \qquad \qquad - 2 \exp(-\si \Psi(2)) \sin(\si\Im \Psi(2)) \Big\}\\
&\qquad \qquad + \frac{d(b+c)}{4} \Big\{\exp(-|s-\si|\Re\Psi(2)) \sin(|s-\si|\Im \Psi(2)) -\exp(-(s+\si) \Re \Psi(2)) \sin((s+\si) \Im \Psi(2)) \\
&\qquad \qquad \qquad - 2\exp(-\si \Re \Psi(2)) \sin(\si \Im \Psi(2))  \Big\} \;d\si ds.
\end{align*}
Since all integrals over the exponential terms converge, 
their contribution in the preceding sum vanishes as $t\ra \infty$ as these are divided by $t^2$ 
and only the constants under the integrals survive. 
Therefore 
\begin{align*}
E_t \stackrel{t\ra \infty}{\lra} \frac{a^2}{4} + \frac{d^2}{4} + \frac{ad}{2} = \frac{(a+d)^2}{4}
\end{align*}
such that 
\begin{align*}
\EE\Big[\Big|\frac{1}{t}\int_0^t \pi_r K(Z_s) ds- \frac{a+d}{2}r\Big|^2\Big] 
&\stackrel{t\ra\infty}{\lra} \frac{(a+d)^2}{4} - \frac{(a+d)^2}{2} + \frac{(a+d)^2}{4} = 0.\\
\end{align*}

\section*{Acknowledgement}

The authors would like to thank the two anonymous referees for their thorough work, 
which has considerably improved both the presentation and the content of this article. 
Both authors thank Prof. Sylvie Roelly and the probability group of Universit\"at Potsdam 
for the hospitality during the one year stay of Paulo-Henrique da Costa in the framework 
of a postdoctoral 12 months stay in 2014 funded by the Ci\^encia sem fronteiras program of the Brazilian government by the grant CsF-CAPES/11786-13-2.  
The authors express their gratitude to the International Research Training Group Berlin - S\~ao Paulo: Dynamical Phenomena of Complex Networks 
for various infrastructure support and Prof. Paulo Ruffino for many inspiring discussions. 
The first author thanks Universidad de los Andes, School of Sciences, 
for the FAPA grant ``Stochastic dynamics of L\'evy driven systems'', 
which supported a visit of the second author.

\end{document}